\title[Covers of infinite translation surfaces]{Indiscriminate covers of infinite translation surfaces are innocent, not devious}
\author[]{W. Patrick Hooper}
\address{The City College of New York, New York, NY, USA 10031}
\address{CUNY Graduate Center, New York, NY, USA 10016}
\email{whooper@ccny.cuny.edu}
\author[]{Rodrigo Trevi\~no}
\address{Department of Mathematics\\
         University of Maryland, College Park}
\email{rodrigo@math.umd.edu}
\newtheorem{theorem}{Theorem}
\newtheorem{proposition}[theorem]{Proposition}
\newtheorem{lemma}[theorem]{Lemma}
\newtheorem{remark}[theorem]{Remark}
\newtheorem{corollary}[theorem]{Corollary}
\theoremstyle{definition}
\newtheorem{definition}[theorem]{Definition}
\newtheorem{notation}[theorem]{Notation}
\newlength{\savearraycolsep}
	{\setlength{\savearraycolsep}{\arraycolsep}%
	\setlength{\arraycolsep}{#1}%
	\begin{array}{#2}}%
	{\end{array}\setlength{\arraycolsep}{\savearraycolsep}}
\newcommand{\red}[1]{{\textcolor{red}{#1}}}
\def\C{\mathbb{C}}%
\def\N{\mathbb{N}}%
\def\P{\mathbb{P}}%
\def\R{\mathbb{R}}%
\def\Z{\mathbb{Z}}%
\def\H{\mathbb{H}} 
\def\Isom{{\mathit{Isom}}} 
\def\GL{\textit{GL}}
\def\SL{\textit{SL}}
\def\SO{\textit{SO}}
\def\0{{\mathbf{0}}}
\def\1{{\mathbf{1}}}
\def\g{{\mathbf{g}}}
\def\v{{\mathbf{v}}} %
\def\sA{{\mathcal{A}}}
\def\sB{{\mathcal{B}}}
\def\sD{{\mathcal{D}}}
\def\sE{{\mathcal{E}}}
\def\sG{{\mathcal{G}}}
\def\sH{{\mathcal{H}}}
\def\sO{{\mathcal{O}}}
\def\sR{{\mathcal{R}}}
\def\sU{{\mathcal{U}}}
\def\half{{\frac{1}{2}}}
\def\bs{{\backslash}}
\def\ker{\textit{ker}}%
\newcommand{\nullset}{\emptyset}
\def\dist{\mathit{dist}} 
\def\imod#1{\allowbreak\mkern10mu({\operator@font mod}\,\,#1)}
\def\Aff{\mathit{Aff}} 
\def\and{{\quad \textrm{and} \quad}}
\newif\ifdraft\drafttrue
\newcommand{\Hom}[0]{{\mathrm{Hom}}}
\newcommand{\Cyl}[0]{{\mathcal{C}}}
\newcommand{\Cov}[0]{{\mathrm{Cov}}}
\newcommand{\Covt}[0]{\mathrm{TopCov}}
\newcommand{\ladder}[0]{L}
\newcommand{\gmap}{\mathfrak{g}}%
\newcommand{\compat}[1]{{\ifdraft{\textcolor{orange}{\textrm{{\bf Pat:} #1}}}\else\ignorespaces\fi}}
\begin{document}
\clearpage
\begin{abstract}
We consider the interaction between passing to finite covers
and ergodic properties of the straight-line flow on finite area translation surfaces with infinite topological type. Infinite type provides for a rich family of degree $d$ covers for any integer $d>1$. We give examples which demonstrate that passing to a finite cover can destroy ergodicity, but we also provide evidence that this phenomenon is rare. We define a natural notion of a random degree $d$ cover and show that, in many cases, ergodicity and unique ergodicity are preserved under passing to random covers. This work provides a new context for exploring the relationship between recurrence of the Teichm\"uller flow and ergodic properties of the straight-line flow.
\end{abstract}
\dedicatory{
\begin{flushleft}The private lives of surfaces \\
are innocent, not devious. \\
\hspace{1in} - Kay Ryan 
\end{flushleft}}
\maketitle
\thispagestyle{empty}

\section{Introduction}

A {\em translation surface} is a pair $(S, \alpha)$, where $S$ is a Riemann surface and 
$\alpha$ is a holomorphic 1-form on $S$. Let $Z \subset S$ denote the zeros of $\alpha$.
The $1$-form $\alpha$ endows $S \smallsetminus Z$ with local coordinates to the plane: for any 
$p$ we have the locally defined {\em coordinate chart} to $\C$ given by the local homeomorphism
$q \mapsto \int_p^q \alpha$. These coordinate charts differ locally only by translation. 
A translation surface inherits a metric by pulling back the Euclidean metric on the plane along the coordinate charts.
Points in $S \smallsetminus Z$ are locally isometric to the plane, while points in $Z$ are cone singularities with cone angle $2(k+1) \pi$ 
where $k \geq 1$ is the degree of the zero of $\alpha$.

We'll say a translation surface is {\em classical} if $S$ is a closed surface.
Here, there is a well known interplay between two types of dynamical systems:
{\em (1)} the dynamics of the (horizontal) {\em translation flow} on a translation surface $(S,\alpha)$ given in local coordinates
by 
$$F^t:S \to S; \quad (x,y) \mapsto (x+t,y),$$
and {\em (2)} the Teichm\"uller deformation $g^t$ on the moduli space of translation surfaces, where
$g^t(S,\alpha)$ is obtained from $(S,\alpha)$ by postcomposing the coordinate charts with the affine coordinate change
$g^t(x,y)=(e^{-t} x, e^t y)$. Namely, the Teichm\"uller deformation renormalizes the translation flow. A famous consequence of this relationship is given by Masur's Criterion: if the forward orbit of the Teichm\"uller deformation, $\{g^t(S,\alpha)\}_{t \geq 0}$, has a convergent subsequence $g^{t_n}(S,\alpha)$ with $t_n \to \infty$ (i.e., the orbit is {\em non-divergent}), then the translation flow is uniquely ergodic \cite{M92}. 

A surface has {\em infinite topological type} if its fundamental group is not finitely generated. This includes infinite genus surfaces.
Recently, many results similar in spirit to Masur's criterion have been proven in special cases for translation surfaces of infinite 
topological type \cite{Hinf,HW13, rodrigo:erg, trevino:bratMasur}.

Reasoning about infinite type translation surfaces introduces several difficulties:
\begin{enumerate}
\item Translation flow trajectories on a infinite type surface may not be defined for all time. In fact, there are interesting examples where trajectories are defined nowhere for some fixed time, for example the Icicled surface of Randecker \cite[Example 5.5]{Randecker}. In order to consider ergodicity, we require the translation flow to be {\em defined for all time almost everywhere} (i.e., for all $t$, the time $t$ map $F^t$ is defined a.e.).
\item While the translation flow is continuous, the domain is not compact.
\item There is no well established moduli space of infinite-type translation surfaces. (In this direction, the preprints  \cite{HooperImmersions1} and \cite{HooperImmersions2} propose a topology for the space of translation surfaces with basepoint.)
\end{enumerate}
In the classical case (1) and (2) can be easily resolved (via standard coding arguments) and moduli space is a finite dimension orbifold.
The translation flow can often be easily shown to be defined for all time almost everywhere, and we will in fact be assuming stronger hypotheses. Difficulty (2) simply must be worked around. A standard approach to work around (3) involves defining some topological space of surfaces where $\SL(2,\R)$ acts and prove that non-divergence of an orbit $t \mapsto g^t(S, \alpha)$ within this space entails consequences for the translation flow on $(S,\alpha)$. To date, the primary mechanism for  
building such a topological space of surfaces uses affine symmetries of the translation surface $(S,\alpha)$, as we will now explain.

Two translation surfaces
$(S,\alpha)$ and $(S', \alpha')$ are {\em translation equivalent} if there is a homeomorphism $h:S \to S'$ which is a translation in local coordinates.
Let $\SL_\pm(2,\R)$ denote the group of $2 \times 2$ real matrices with determinant $\pm 1$. 
The group $\SL_\pm(2,\R)$ acts on the collection of all translation surfaces by simultaneous postcomposition with all local charts;
see \S \ref{sect:background}.
We define $\sO(S,\alpha)$ to be the $\SL_\pm(2,\R)$ orbit:
\begin{equation}
\label{eq:sO}
\sO(S,\alpha)=\{A(S,\alpha)~:~A \in \SL_\pm(2,\R)\}/\text{translation equivalence}.
\end{equation}
The orbit $\sO(S,\alpha)$ is parameterized by a choice of $A \in \SL_\pm(2,\R)$
and thus inherits a topology as a topological quotient space. This space can be described concretely
as the quotient of $\SL_\pm(2,\R)$ by a subgroup, namely the surface's {\em Veech group}, 
$$V(S,\alpha) = \{A \in \SL_\pm(2,\R)~:~\text{$A(S,\alpha)$ is translation equivalent to $(S,\alpha)$}\}.$$
Observe that two surfaces $A_1(S,\alpha)$ and $A_2(S,\alpha)$ with $A_1,A_2 \in \SL_\pm(2,\R)$ are
translation equivalent if and only if $A_1$ and $A_2$ determine the same left coset (or equivalently, if $A_2^{-1} A_1 \in V(S,\alpha)$). Thus, there is a natural identification between $\sO(S,\alpha)$ and the left coset space
$\SL_\pm(2,\R)/V(S,\alpha)$.
In particular, this structure allows us to say
that the Teichm\"uller trajectory $g^t(S,\alpha)$ is 
{\em non-divergent in $\sO(S,\alpha)$} if there is a sequence $t_n \to +\infty$ so that 
$g^{t_n}(S,\alpha)$ converges in $\sO(S,\alpha)$.

Work of the second author \cite[Theorem 2]{rodrigo:erg} (in this paper as Corollary \ref{cor:rodrigo ergodicity from Veech group})
shows that if a finite area translation surface of possibly infinite genus $(S,\alpha)$ has a non-divergent Teichm\"uller trajectory in $\sO(S,\alpha)$, then the translation flow is ergodic on $(S,\alpha)$. 
It seems reasonable to conjecture that in fact unique ergodicity holds under these hypotheses: Separate works of the two co-authors each prove results to this effect in special cases; see Example F.2 and Appendix H of \cite{Hinf} and the main result of \cite{trevino:bratMasur}.


Suppose $(S,\alpha)$ is a finite area translation surface with infinite topological type which has a translation flow which is defined for all time almost everywhere and is ergodic.
Any cover of $S$ inherits a translation structure by pulling back the $1$-form $\alpha$ under the covering map.
When endowed with this structure, the cover's translation flow will also be defined for all time almost everywhere.
In this paper we study the ergodicity of the translation flow on finite unbranched covers $(\tilde S, \tilde \alpha)$
of $(S,\alpha)$. We introduce some spaces of covers and following the above paradigm study how the behavior of Teichm\"uller trajectories through this space influences the ergodicity of the translation flow on a cover.

In order to say something constructive, we pick some integer $d \geq 2$, and restrict attention to covers of degree $d$. Choose an arbitrary non-singular basepoint on the surface $S$. Let $(\tilde S, \tilde \alpha)$ be a cover of $(S,\alpha)$, where the flat structure on $(\tilde S, \tilde \alpha)$ is lifted from the one on $(S,\alpha)$. The fiber of the basepoint can be identified in an arbitrary way with the set $\{1,2, \ldots, d\}$. The {\em monodromy action} is the natural right action of the fundamental group of $S$ on the fiber of the basepoint. Our identification of the fiber determines
a {\em monodromy representation} $\pi_1(S) \to \Pi_d$, where $\Pi_d$ is the permutation group of $\{1, \ldots, d\}$. We note that a cover can be reconstructed from its monodromy representation, and two covers are 
isomorphic (in the sense of covering theory) if and only if these monodromy representations differ by conjugation by an element of $\Pi_d$. These ideas are reviewed in \S \ref{sec:spaces}.
Now fix a subgroup $G \subset \Pi_d$. We say a cover $(\tilde S, \tilde \alpha)$
has {\em monodromy in $G$} if it can be realized by a monodromy representation to $G$. From the above remarks, we note that the space of covers of $(S,\alpha)$ with monodromy in $G$ up to cover isomorphism is identified with
$$\Pi_d \bs \Hom\big(\pi_1(S),G\big),$$
where $\Pi_d$ acts on $\Hom\big(\pi_1(S),G\big)$ by conjugation.
We endow $\Hom\big(\pi_1(S),G\big)$ with the product topology by viewing it as a subset of $G^{\pi_1(S)}$,
where the finite set $G$ is given the discrete topology. In particular, $\Hom\big(\pi_1(S),G\big)$ is homeomorphic to a Cantor set, since $\pi_1(S)$ is a free group with countably many generators; see \S \ref{sec:spaces}.

We define
$\text{Cov}_G(S,\alpha)$ to be the collection of all covers of $(S,\alpha)$ with monodromy in $G$ up to translation equivalence. 
Translation equivalence is a coarser notion of equivalence than covering isomorphism, and we give $\text{Cov}_G(S,\alpha)$ the quotient topology by viewing this space of covers as a 
topological quotient of $\Pi_d \bs \Hom\big(\pi_1(S),G).$
The choice of $h \in \Hom\big(\pi_1(S),G)$ determines a cover $(\tilde S_h, \tilde \alpha_h)$ with $h$ describing the monodromy of the cover. We note that the cover need not be connected. In fact $(\tilde S_h, \tilde \alpha_h)$ is connected
if and only if the image of the monodromy representation, $h\big(\pi_1(S)\big)$ acts transitively on $\{1, \ldots, d\}$.

The main object of interest to this paper is the union of $\SL_\pm(2, \R)$ orbits of
covers of $(S,\alpha)$ with monodromy in $G$:
\begin{equation}
\label{eq:covers cocycle}
\tilde \sO_G(S,\alpha)=\{A (\tilde S, \tilde \alpha):~
\text{$A \in \SL_\pm(2,\R)$ and $(\tilde S, \tilde \alpha) \in \Cov_G(S,\alpha)$}\}
/\sim,
\end{equation}
where $\sim$ denotes translation equivalence.
We call the action of the diagonal subgroup $g^t$ of $\SL_\pm(2, \R)$ on $\tilde \sO_G(S,\alpha)$ the {\em cover cocycle}. Again, $\tilde \sO_G(S,\alpha)$ inherits a topology because it can be considered a topological quotient
of $\SL_\pm(2,\R) \times \Cov_G(S,\alpha)$. This is a natural space in which to study Teichm\"uller deformations
of covers of $(S, \alpha)$. Like $\Cov_G(S,\alpha)$, this space contains both connected and disconnected surfaces
provided $G$ acts transitively on $\{1,\ldots,d\}$. We prove the following:

\begin{theorem}[Connected accumulation point implies ergodicity]
\label{thm:1}
Let $d \geq 2$ be an integer and let $G \subset \Pi_d$ be a subgroup which acts transitively on $\{1,\ldots, d\}$.
Let $(S,\alpha)$ be a finite area translation surface
with infinite topological type, and let $(\tilde S, \tilde \alpha)$ be a cover with monodromy in $G$.
Then, the translation flow on $(\tilde S, \tilde \alpha)$ is defined for all time almost everywhere and is ergodic if 
the Teichm\"uller trajectory $g^t(\tilde S, \tilde \alpha)$ has an
$\omega$-limit point in $\tilde \sO_G(S,\alpha)$ representing a connected surface.
\end{theorem}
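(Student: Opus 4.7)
My plan is to reduce Theorem~\ref{thm:1} to the second author's ergodicity criterion \cite[Theorem 2]{rodrigo:erg}, which provides ergodicity of the translation flow on $(\tilde S, \tilde \alpha)$ whenever the trajectory $g^t(\tilde S, \tilde \alpha)$ is non-divergent in $\sO(\tilde S, \tilde \alpha)$. The task is therefore to upgrade the present hypothesis---a connected $\omega$-limit in the larger space $\tilde \sO_G(S,\alpha)$---into non-divergence inside the smaller space $\sO(\tilde S, \tilde \alpha)$.

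First I would unpack the hypothesized convergence through the quotient structure: after verifying that the relevant quotient map is well-behaved enough to lift convergence, a subsequence $g^{t_n}(\tilde S, \tilde \alpha)$ converging in $\tilde \sO_G(S,\alpha)$ should yield $A_n \to A_\infty$ in $\SL(2,\R)$ and $[h_n] \to [h_\infty]$ in $\Pi_d \bs \Hom(\pi_1(S), G)$ with $A_n(\tilde S_{h_n}, \tilde \alpha_{h_n})$ translation-equivalent to $g^{t_n}(\tilde S, \tilde \alpha)$, and with $h_\infty$ giving the connected limit. Since transitivity of a monodromy image on $\{1, \ldots, d\}$ is witnessed by finitely many elements of $\pi_1(S)$, it is an open condition in the product topology on $\Hom(\pi_1(S), G)$, so $h_n$ gives a connected cover for all sufficiently large $n$.

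The heart of the argument is then to exploit connectedness at both ends to show that the translation equivalence $A_n(\tilde S_{h_n}, \tilde \alpha_{h_n}) \sim g^{t_n}(\tilde S, \tilde \alpha)$ descends to a translation equivalence of base surfaces $A_n(S,\alpha) \sim g^{t_n}(S,\alpha)$, placing $B_n := g^{-t_n} A_n$ in $V(S,\alpha)$ and relating $h_n$ to $h$ via the natural $B_n$-action on monodromy classes. Replacing $(A_n, [h_n])$ by an equivalent representative with second coordinate $[h]$ would then exhibit $g^{t_n}(\tilde S, \tilde \alpha)$ as converging to a point of $\sO(\tilde S, \tilde \alpha)$, and \cite[Theorem 2]{rodrigo:erg} concludes.

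The main obstacle is precisely the rigidity claim in the last step: a translation equivalence between total spaces of degree-$d$ covers need not a priori commute with the covering projection, and the connectedness hypothesis is indispensable to rule out pathological behavior (for disconnected covers one could freely permute components, or equate inequivalent covers whose total spaces happen to be translation-equivalent). To handle this I would carry out a local analysis: a map that is locally a translation between two cover total spaces preserves their locally-discrete fiber structure, and transitivity of the monodromy together with connectedness of both source and target should propagate this local compatibility into a global cover-theoretic identification, which is exactly the Veech-group relation between $h_n$ and $h$ required by the previous step.
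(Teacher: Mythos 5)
Your plan is built on the claim that a connected $\omega$-limit in $\tilde\sO_G(S,\alpha)$ can be upgraded to non-divergence of $g^t(\tilde S,\tilde\alpha)$ in $\sO(\tilde S,\tilde\alpha)$, so that \cite[Theorem 2]{rodrigo:erg} applies. That upgrade is false in general, and this is the essential gap. Unwinding the topology on $\tilde\sO_G(S,\alpha)$ gives times $t_k\to\infty$, Veech group elements $r_k\in V(S,\alpha)$, and $w_k\to I$ with $g^{t_k}r_k^{-1}=w_k a$ and $r_k(\tilde S,\tilde\alpha)\to(\tilde S^\ast,\tilde\alpha^\ast)$ in $\Cov_G(S,\alpha)$. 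The limiting monodromy class $[h^\ast]$ is some accumulation point of the $V(S,\alpha)$-orbit of $[h]$ inside the Cantor set $\Cov_G(S,\alpha)$; it need \emph{not} lie in that orbit. When it does not, $(\tilde S^\ast,\tilde\alpha^\ast)$ is not translation equivalent to any $R(\tilde S,\tilde\alpha)$ with $R\in\SL(2,\R)$, so $g^{t_k}(\tilde S,\tilde\alpha)$ has no limit in $\sO(\tilde S,\tilde\alpha)$. Your step ``replace $(A_n,[h_n])$ by an equivalent representative with second coordinate $[h]$'' is circular: the only such representative is $(g^{t_n}R,[h])$ with $R$ in the stabilizer of $[h]$, i.e.\ $R\in V(\tilde S,\tilde\alpha)$ up to lifting, so asking $C_n=g^{t_n}R$ to converge is precisely asking for the non-divergence you were trying to derive. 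This is not a minor technicality: in the motivating examples (Chamanara covers with trivial cover-Veech group) the trajectory genuinely diverges in $\sO(\tilde S,\tilde\alpha)$ even while it recurs in $\tilde\sO_G(S,\alpha)$.

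The paper's proof avoids this entirely by working with the \emph{geometric} integrability criterion (Theorem \ref{thm:integrability}), which is the engine behind \cite[Theorem 2]{rodrigo:erg} but is strictly more flexible: it does not require the trajectory to return near a single surface, only that one can exhibit, at a set of times of infinite total measure, large compact subsurfaces with uniformly bounded diameter, bounded distance to singularities, and curves connecting the pieces staying away from singularities. Connectedness of the limit cover $(\tilde L,\tilde\beta)=a(\tilde S^\ast,\tilde\alpha^\ast)$ supplies a compact connected topological disk $L^\circ\subset a(S,\alpha)$ of area close to $1$ together with $d-1$ paths in $\tilde L$ joining the $d$ lifts of $L^\circ$. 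The crucial use of the topology on $\Cov_G$ is then the following: because convergence of $r_k(\tilde S,\tilde\alpha)\to(\tilde S^\ast,\tilde\alpha^\ast)$ is convergence in the product topology on monodromy representations, for $k$ large the covers $r_k(\tilde S,\tilde\alpha)$ assign the \emph{same} permutations as $(\tilde S^\ast,\tilde\alpha^\ast)$ to the finitely many loops underlying these connecting paths. That lets the same paths lift with the same combinatorics in each $w_k^{-1}g^{t_k}(\tilde S,\tilde\alpha)$, giving the uniform geometric control needed on a flow box of times around each $t_k$. Your intended intermediate step (``open condition on transitivity'') captures a shadow of this idea, but it is not enough: you need the full finite-coincidence of monodromy along the specified loops, and you then need to feed it into a criterion that does not demand that $[h_k]$ eventually equal (or even lie in the orbit of) $[h]$.
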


Theorem \ref{thm:1} is proved in \S  \ref{sec:erg}.
We note that ergodicity of the translation flow on a finite cover has consequences for classifying the invariant measures.

\begin{proposition}[Ergodicity and unique lifts of measures]
\label{prop:lifting measures}
Suppose that $(\tilde S, \tilde \alpha)$ is a finite cover of the finite area translation surface $(S, \alpha)$ with infinite topological type. Also suppose that both surfaces have translation flows which are defined for all time almost everywhere and are ergodic.
Then, Lebesgue measure on $(\tilde S,\tilde \alpha)$ is the unique translation flow-invariant measure which projects to Lebesgue measure on $(S, \alpha)$ under the covering map.
\end{proposition}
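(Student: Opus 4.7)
The plan is to show that any translation-flow invariant Borel probability measure $\tilde\mu$ on $(\tilde S,\tilde\alpha)$ with $\pi_*\tilde\mu=\mathrm{Leb}_S$ (where $\pi:\tilde S\to S$ denotes the covering map) must be absolutely continuous with respect to Lebesgue measure $\lambda$ on $(\tilde S,\tilde\alpha)$, and then to use ergodicity of the flow on the cover to force the Radon--Nikodym density to be constant.

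The first step is to establish the pointwise bound $\tilde\mu\le\lambda$. Since $(\tilde S,\tilde\alpha)$ is second countable and $\pi$ is an unbranched local translation isometry away from the $\lambda$-null cone locus, I choose a countable cover $\{U_i\}$ by open sets on which $\pi|_{U_i}$ is injective and replace it by the disjoint Borel refinement $V_i:=U_i\smallsetminus\bigcup_{j<i}U_j$. For any Borel $B\subset\tilde S$, set $B_i:=B\cap V_i$. Because $\pi|_{U_i}$ is an isometry, $\lambda(B_i)=\mathrm{Leb}_S(\pi(B_i))$, and because $B_i\subset\pi^{-1}(\pi(B_i))$,
\[
\tilde\mu(B_i)\le\tilde\mu\bigl(\pi^{-1}(\pi(B_i))\bigr)=\mathrm{Leb}_S(\pi(B_i))=\lambda(B_i).
\]
Summing over $i$ gives $\tilde\mu(B)\le\lambda(B)$, so $\tilde\mu\ll\lambda$ with density $\phi:=d\tilde\mu/d\lambda\in[0,1]$.

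Next I would show that $\phi$ is flow-invariant. Since the translation flow $F^t$ preserves $\lambda$, the pushforward $(F^t)_*\tilde\mu$ has density $\phi\circ F^{-t}$ with respect to $\lambda$; combined with $(F^t)_*\tilde\mu=\tilde\mu$, this yields $\phi\circ F^{-t}=\phi$ $\lambda$-a.e.\ for each $t$, and a standard Fubini argument upgrades this to $\lambda$-a.e.\ orbit-constancy along the flow. Ergodicity of $F^t$ on $(\tilde S,\tilde\alpha)$ with respect to $\lambda$ then forces $\phi\equiv c$ for some constant $c$. Matching total masses determines $c$: since the cover has degree $d$, $\lambda(\tilde S)=d\cdot\mathrm{Leb}_S(S)$, while $\tilde\mu(\tilde S)=\pi_*\tilde\mu(S)=\mathrm{Leb}_S(S)$, forcing $c=1/d$. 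Thus $\tilde\mu=\tfrac{1}{d}\lambda$ is Lebesgue measure on $(\tilde S,\tilde\alpha)$, normalized so that its pushforward is $\mathrm{Leb}_S$.

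The only mild piece to verify carefully is the existence of the countable Borel trivializing partition $\{V_i\}$ in infinite topological type, but this is immediate from second countability of $\tilde S$ and the fact that $\pi$ is a finite unbranched covering map, so there is no substantial obstacle. I remark that only ergodicity of the translation flow on the cover is actually needed in this argument---ergodicity on the base is automatic from ergodicity on the cover (any $F^t$-invariant Borel set on $S$ lifts to an invariant set on $\tilde S$) and is listed in the hypotheses merely for symmetry of framing.
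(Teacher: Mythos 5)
Your proof is correct, and it takes a genuinely different route from the paper's. The paper passes to the regular (Galois) cover $(\hat S,\hat\alpha)$ associated to the kernel of the monodromy representation, lifts $\tilde\mu$ to a unique deck-group-equivariant measure $\hat\mu$ on $\hat S$, averages the pushforwards of $\hat\mu$ over the full deck group $\Delta$ to recover Lebesgue measure on $\hat S$, and then pushes down to $\tilde S$ to exhibit Lebesgue measure $\tilde\lambda$ as a finite convex combination of invariant probability measures that includes $\tilde\mu$; extremality of the ergodic measure $\tilde\lambda$ then forces $\tilde\mu=\tilde\lambda$. Your argument avoids the detour through the regular cover altogether: the local-isometry property of the unbranched covering map gives the domination $\tilde\mu\le\lambda$ directly (on any Borel set that fits inside a trivializing sheet), so $\tilde\mu\ll\lambda$ with density $\phi\in[0,1]$; flow-invariance of $\tilde\mu$ and of $\lambda$ makes $\phi$ an invariant function, and ergodicity of the translation flow on the cover forces $\phi$ to be a.e.\ constant. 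This is more elementary (only Radon--Nikodym and the definition of ergodicity) and makes the logical dependencies sharper: you correctly note that only ergodicity on the cover is actually used, which is true in the paper's proof as well though not remarked upon there. What the paper's approach buys is a slightly more structural picture -- it packages the argument via the deck group of the Galois closure, making it immediate that the other members of the convex combination are also honest invariant measures -- but for the statement at hand your route is cleaner.
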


See \S \ref{sect:proofs} for the brief proof.

\begin{corollary}[Lifting unique ergodicity]
\label{cor:lifting}
Suppose the conditions of Proposition \ref{prop:lifting measures} are satisfied
and additionally the translation flow on $(S, \alpha)$ is uniquely ergodic. 
Then, the translation flow on the cover $(\tilde S,\tilde \alpha)$ is uniquely ergodic.
\end{corollary}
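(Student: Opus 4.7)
\noindent
The plan is to reduce directly to Proposition \ref{prop:lifting measures} by using unique ergodicity downstairs to control the pushforward of any invariant measure upstairs.

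\medskip

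\noindent
Let $\pi:\tilde S \to S$ denote the covering map, and let $\tilde \mu$ be any translation-flow-invariant Borel probability measure on $(\tilde S,\tilde \alpha)$. First, I would observe that since $\pi$ intertwines the translation flows on $\tilde S$ and $S$, the pushforward $\pi_\ast \tilde\mu$ is a translation-flow-invariant Borel probability measure on $(S,\alpha)$. By the assumed unique ergodicity of the translation flow on $(S,\alpha)$, this forces $\pi_\ast \tilde\mu$ to equal the normalized Lebesgue measure on $(S,\alpha)$.

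\medskip

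\noindent
Next, I would apply Proposition \ref{prop:lifting measures}. Its hypotheses are satisfied: $(\tilde S,\tilde\alpha)$ is a finite cover of the finite-area infinite-type translation surface $(S,\alpha)$, and both surfaces have ergodic translation flow (the downstairs flow is ergodic because it is uniquely ergodic, and the upstairs flow is ergodic by the standing assumption of the corollary, inherited from Proposition \ref{prop:lifting measures}). The proposition then asserts that the normalized Lebesgue measure on $(\tilde S,\tilde\alpha)$ is the \emph{unique} flow-invariant probability measure whose $\pi$-pushforward is Lebesgue on $(S,\alpha)$. Since we just showed $\tilde\mu$ has this property, $\tilde\mu$ must itself be Lebesgue on $(\tilde S,\tilde\alpha)$. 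As $\tilde\mu$ was arbitrary, unique ergodicity on the cover follows.

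\medskip

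\noindent
There is essentially no technical obstacle here: the only point requiring a moment's care is the normalization of measures, namely that ``Lebesgue'' on the finite area surfaces upstairs and downstairs should be normalized consistently so that $\pi_\ast$ of the upstairs Lebesgue probability is the downstairs Lebesgue probability. This is automatic because $\pi$ is a finite unbranched cover of constant degree $d$, so unnormalized Lebesgue measures satisfy $\pi_\ast \mathrm{Leb}_{\tilde S} = d\cdot \mathrm{Leb}_S$, and dividing both sides by the appropriate total areas produces the matching probability measures.
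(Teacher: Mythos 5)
Your proof is correct and follows essentially the same argument as the paper: push any flow-invariant probability measure on the cover down to the base, invoke unique ergodicity downstairs to identify the pushforward as Lebesgue, and then apply Proposition \ref{prop:lifting measures}. The remark on normalizing Lebesgue measures is a minor and accurate clarification the paper leaves implicit.
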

\begin{proof}
Suppose $\mu$ is a translation flow-invariant probability measure on $(\tilde S,\tilde \alpha)$.
Then its push forward under the covering map, $p_\ast \mu$ is a translation-flow invariant measure on $(S, \alpha)$, and by uniqueness is necessarily Lebesgue measure. The proposition guarantees that $\mu$
is also Lebesgue measure.
\end{proof}

The existence of a Veech group for the surface $(S,\alpha)$ gives a mechanism for generating the type of
accumulation we need for applying Theorem \ref{thm:1}:

\begin{proposition}
\label{prop:accumulation points}
Suppose $(S,\alpha)$ is a finite area translation surface with infinite topological type, and let $(\tilde S, \tilde \alpha)$ be a cover with monodromy in $G$.
If $(S,\alpha)$ has a non-divergent Teichm\"uller trajectory in $\sO(S,\alpha)$, then the Teichm\"uller trajectory of the cover $g^t(\tilde S, \tilde \alpha)$ has an $\omega$-limit point in $\tilde \sO_G(S,\alpha)$.
\end{proposition}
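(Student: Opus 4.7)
The plan is to exploit the description $\sO(S,\alpha) \cong \SL(2,\R)/V(S,\alpha)$ together with the compactness of $\Cov_G(S,\alpha)$. Non-divergence of the Teichm\"uller trajectory in $\sO(S,\alpha)$ means that for some sequence $t_n \to +\infty$ the cosets $g^{t_n} V(S,\alpha)$ converge in $\SL(2,\R)/V(S,\alpha)$. Lifting this convergence produces elements $V_n \in V(S,\alpha)$ and $B \in \SL(2,\R)$ with $g^{t_n} V_n \to B$.

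Since $V_n^{-1} \in V(S,\alpha)$, there is an affine self-diffeomorphism $\phi_n$ of $(S,\alpha)$ whose derivative in local charts equals $V_n^{-1}$; equivalently, $\phi_n$ is a translation equivalence $V_n^{-1}(S,\alpha) \to (S,\alpha)$. Let $p:(\tilde S, \tilde \alpha)\to (S,\alpha)$ be the original covering map with monodromy $h:\pi_1(S)\to G$. The composition
$$
V_n^{-1}(\tilde S, \tilde \alpha) \;\xrightarrow{\;p\;}\; V_n^{-1}(S,\alpha) \;\xrightarrow{\;\phi_n\;}\; (S,\alpha)
$$
endows the translation surface $V_n^{-1}(\tilde S, \tilde \alpha)$ with a new covering structure over $(S,\alpha)$; call the resulting cover $(\tilde Y_n, \tilde \beta_n)$. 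After fixing a path from the basepoint to its $\phi_n^{-1}$-image, the monodromy of this cover is $h \circ (\phi_n)_*^{-1}$, which takes values in $G$; hence $(\tilde Y_n, \tilde \beta_n) \in \Cov_G(S,\alpha)$.

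By construction $V_n(\tilde Y_n, \tilde \beta_n) = (\tilde S, \tilde \alpha)$ as translation surfaces, so $(g^{t_n} V_n)(\tilde Y_n, \tilde \beta_n) = g^{t_n}(\tilde S, \tilde \alpha)$ in $\tilde \sO_G(S,\alpha)$. Now $\Hom(\pi_1(S),G)$ is closed in the compact product space $G^{\pi_1(S)}$, and is therefore itself compact; consequently so are its quotients $\Pi_d \bs \Hom(\pi_1(S),G)$ and $\Cov_G(S,\alpha)$. Extracting a further subsequence so that $(\tilde Y_n, \tilde \beta_n) \to (\tilde Y, \tilde \beta) \in \Cov_G(S,\alpha)$, and combining with $g^{t_n} V_n \to B$, continuity of the quotient map $\SL(2,\R)\times \Cov_G(S,\alpha)\to \tilde \sO_G(S,\alpha)$ yields $g^{t_n}(\tilde S, \tilde \alpha) \to B(\tilde Y, \tilde \beta)$, an $\omega$-limit point in $\tilde \sO_G(S,\alpha)$.

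The most delicate piece is the monodromy computation in Step 2: the affine diffeomorphism $\phi_n$ need not fix the basepoint, so $(\phi_n)_*$ is only defined up to an inner automorphism of $\pi_1(S)$. However, that inner ambiguity merely conjugates $h\circ(\phi_n)_*^{-1}$ by an element of the image of $h$, hence by an element of $G$, so the resulting cover is a well-defined point of $\Pi_d\bs\Hom(\pi_1(S),G)$ independent of the auxiliary choice of path.
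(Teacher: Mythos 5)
Your proof is correct and follows essentially the same route as the paper's. The paper writes the non-divergence as $g^{t_n} = A_n R_n$ with $R_n \in V(S,\alpha)$ and $A_n \to A_\infty$ (your $V_n = R_n^{-1}$ and $B = A_\infty$), invokes Proposition~\ref{prop:action on covers} to produce the translated covers $(\tilde S_n, \tilde\alpha_n) = R_n(\tilde S,\tilde\alpha) \in \Cov_G(S,\alpha)$ (your $(\tilde Y_n,\tilde\beta_n)$), and then uses sequential compactness of the Cantor set $\Cov_G(S,\alpha)$ exactly as you do; your final paragraph about the basepoint ambiguity is in effect re-deriving what Proposition~\ref{prop:action on covers} encapsulates via the choice of path $\beta$ in equation~\eqref{eq:curve needed}.
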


Again, see \S \ref{sect:proofs} for the proof. In order to use the conclusions of Theorem \ref{thm:1}, 
we would need to know that there is a connected $\omega$-limit point. We expect that it is 
difficult in general to determine precisely when this is true. However, we will introduce
a notion under which most covers have connected accumulation points.

Let $G \subset \Pi_d$ be as above, and let $(S,\alpha)$ be a finite area translation surface with infinite topological type. We explain in \S \ref{sect:measures} that there is a natural Borel probability measure $m_G$ on the space $\text{Cov}_G(S,\alpha)$ of covers with monodromy in $G$ which is invariant under automorphisms of the base surface $S$. This gives us a notion of a ``random cover.''
Informally, the measure $m_G$ corresponds to the notion of random cover
obtained by flipping a fair coin to determine the images
of the generators of the fundamental group under the monodromy representation.

\begin{theorem}[Random covers accumulate on connected covers]
\label{thm:2}
Let $(S,\alpha)$ be a finite area translation surface
with infinite topological type, and suppose that it has a non-divergent Teichm\"uller trajectory in $\sO(S,\alpha)$.
Suppose $G$ is a transitive subgroup of the permutation group $\Pi_d$. 
Then $m_G$-almost every cover $(\tilde S, \tilde \alpha) \in \Cov_G(S, \alpha)$ has
a Teichm\"uller trajectory with a connected accumulation point.
\end{theorem}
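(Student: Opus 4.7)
The plan is to combine Proposition \ref{prop:accumulation points} with invariance of $m_G$ under the natural action of the Veech group $V(S,\alpha)$ on $\Cov_G(S,\alpha)$, and then finish via an outer-regularity argument. The non-divergence hypothesis gives a sequence $t_n \to +\infty$, a matrix $B \in \SL(2,\R)$, and Veech elements $W_n \in V(S,\alpha)$ with $g^{t_n} W_n \to B$ in $\SL(2,\R)$. Each $W_n$ is the derivative of an affine self-map of $(S,\alpha)$, so pullback of covers gives an action of $W_n$ on $\Cov_G(S,\alpha)$: the image $h_n := W_n^\ast [h]$ of a monodromy class $[h]$ differs from $h$ by an automorphism of $\pi_1(S)$, and by construction $(\tilde S_h, \tilde \alpha_h)$ is translation-equivalent to $W_n(\tilde S_{h_n}, \tilde \alpha_{h_n})$. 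Applying $g^{t_n}$ to both sides yields
\[
g^{t_n}(\tilde S_h, \tilde \alpha_h) \;\sim\; (g^{t_n}W_n)(\tilde S_{h_n}, \tilde \alpha_{h_n}).
\]
Since $g^{t_n}W_n \to B$ and the compactness of $\Cov_G(S,\alpha)$ ensures $(h_n)$ has accumulation points, whenever $h_{n_k} \to h^\ast$ in $\Cov_G(S,\alpha)$ we get $g^{t_{n_k}}(\tilde S_h, \tilde \alpha_h) \to B(\tilde S_{h^\ast}, \tilde \alpha_{h^\ast})$ in $\tilde \sO_G(S,\alpha)$. The theorem thus reduces to the probabilistic claim: for $m_G$-a.e.\ $h$, some accumulation point $h^\ast$ of $(h_n)$ has transitive image.

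The measure $m_G$ descends from the Bernoulli product measure on $\Hom(\pi_1(S),G) \cong G^{\mathrm{gens}}$, and this product measure is invariant under $\Aut(\pi_1(S))$---a check performed on Nielsen generators of the automorphism group of a free group, each of which acts on $G^{\N}$ by coordinate permutations or translations that preserve the uniform product measure. Hence every map $h \mapsto h_n$ is $m_G$-preserving. Let $N \subset \Cov_G(S,\alpha)$ be the set of monodromy classes whose image fails to act transitively on $\{1,\dots,d\}$; this is closed, being the finite union over non-trivial partitions of $\{1,\dots,d\}$ of the closed sets of representations preserving the partition. Since $\pi_1(S)$ has infinite rank and $G$ is transitive, a Bernoulli random homomorphism hits a transitive subset of $G$ almost surely, so $m_G(N)=0$. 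By outer regularity, for any $\epsilon > 0$ there is an open $V \supset N$ with $m_G(V) < \epsilon$. If $h$ lies in the bad set $E := \{h : \text{every accumulation point of }(h_n) \text{ is in } N\}$, then by compactness of $\Cov_G(S,\alpha) \setminus V$ we must have $h_n \in V$ for all sufficiently large $n$. Fatou's lemma applied to $A_n := \{h : h_n \in V\}$, together with $m_G(A_n) = m_G(V)$, gives
\[
m_G(E) \leq m_G\bigl(\liminf_n A_n\bigr) \leq \liminf_n m_G(A_n) = m_G(V) < \epsilon.
\]
Letting $\epsilon \to 0$ yields $m_G(E) = 0$, so $m_G$-a.e.\ $h$ admits an accumulation point $h^\ast$ of $(h_n)$ outside $N$, producing a connected accumulation point $B(\tilde S_{h^\ast}, \tilde \alpha_{h^\ast})$ of the Teichm\"uller trajectory of $(\tilde S_h, \tilde \alpha_h)$.

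The main technical obstacle I expect is not the probabilistic endgame but the first step: defining the Veech action $W_n^\ast$ on $\Cov_G(S,\alpha)$ precisely and verifying the identification $(\tilde S_h, \tilde \alpha_h) \sim W_n(\tilde S_{h_n}, \tilde \alpha_{h_n})$ at the level of translation equivalence classes of covers. This requires bookkeeping with basepoints---the affine self-maps need not fix a chosen basepoint, so the induced map on $\pi_1(S)$ is only well-defined as an outer automorphism---and one must verify that the $\Pi_d$-quotient in $\Cov_G(S,\alpha) = \Pi_d \backslash \Hom(\pi_1(S),G)$ absorbs exactly the inner-automorphism ambiguity while still leaving a well-defined, measure-preserving action. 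Once that Veech action is pinned down, the outer-regularity plus Fatou endgame runs cleanly.
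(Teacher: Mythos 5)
Your argument follows the same route as the paper's: reduce the question via the Veech-group action on covers, observe this action is measure-preserving, note the set of disconnected (non-transitive) covers has measure zero, and finish with outer regularity plus the $\liminf$ / Fatou step. The only notable deviation is that you work directly with $m_G$ on $\Cov_G(S,\alpha)$, while the paper runs the argument with $\nu_G$ on $\Covt_G(S)$ and then pushes forward to $\Cov_G(S,\alpha)$ at the end; both are fine.

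One caveat worth flagging: your justification of measure-invariance by ``a check on Nielsen generators'' is not quite right for $\pi_1(S)$ of infinite rank. The automorphism group of a countably-generated free group is not generated in any naive sense by elementary Nielsen moves (automorphisms may touch infinitely many generators simultaneously), and the cylinder-set preservation under such automorphisms is not a formal consequence of the finite-rank case. The paper's Lemma \ref{lem:measPres} supplies the actual argument: the preimage of a cylinder set under an automorphism is controlled by finitely many generators, and the key step is that a free factor of a free subgroup remains a free factor of a larger finitely-generated piece (citing \cite[Claim 2.5]{Puder13}), which is what lets you count homomorphisms and recover $|G|^{-k}$. Substitute that lemma for your Nielsen heuristic and the rest goes through unchanged.
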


The proof lies in \S \ref{sect:proofs}. As a consequence of Theorem \ref{thm:1}, we see:

\begin{corollary}[Ergodicity of random covers]
\label{cor:2}
With the hypotheses of Theorem \ref{thm:2}, $m_G$-almost every cover of $(S, \alpha)$ with monodromy in $G$
has ergodic translation flow. 
\end{corollary}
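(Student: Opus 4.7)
The plan is to combine Theorem \ref{thm:2} with Theorem \ref{thm:1} directly; this corollary is almost immediate from those two results, so the proof is a short chain of implications.

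First, I would invoke Theorem \ref{thm:2}. Under the stated hypotheses, it produces a Borel set $\sX \subset \Cov_G(S,\alpha)$ with $m_G(\sX) = 1$ such that every $(\tilde S, \tilde \alpha) \in \sX$ has the property that its Teichm\"uller trajectory $t \mapsto g^t(\tilde S, \tilde \alpha)$ admits a connected accumulation point in $\tilde \sO_G(S,\alpha)$. That is, there is a sequence $t_n \to +\infty$ for which $g^{t_n}(\tilde S, \tilde \alpha)$ converges in $\tilde \sO_G(S,\alpha)$ to a limit that parameterizes a connected translation surface. This step uses the hypothesis that $(S,\alpha)$ has a non-divergent Teichm\"uller trajectory in $\sO(S,\alpha)$ together with the transitivity of $G$.

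Next, for each $(\tilde S, \tilde \alpha) \in \sX$, I would apply Theorem \ref{thm:1}. Its hypotheses (finite area, infinite topological type base surface $(S,\alpha)$, transitive $G \subset \Pi_d$, cover with monodromy in $G$, and an $\omega$-limit point in $\tilde \sO_G(S,\alpha)$ representing a connected surface) are satisfied for every $(\tilde S, \tilde \alpha) \in \sX$ by construction. Theorem \ref{thm:1} then yields ergodicity of the translation flow on $(\tilde S, \tilde \alpha)$.

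Combining the two steps, the set of covers with ergodic translation flow contains $\sX$ and therefore has full $m_G$-measure. There is no real obstacle in this argument beyond the results already quoted; the substantive content lies in Theorems \ref{thm:1} and \ref{thm:2}, whose proofs constitute the main work of the paper.
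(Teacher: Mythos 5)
Your proof is correct and takes exactly the approach the paper intends: the paper introduces Corollary \ref{cor:2} with the phrase ``As a consequence of Theorem \ref{thm:1}, we see,'' immediately after Theorem \ref{thm:2}, so the intended argument is precisely the chain you spell out. Your version is slightly more explicit about the full-measure set produced by Theorem \ref{thm:2} and the hypothesis check for Theorem \ref{thm:1}, but it is the same argument.
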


Corollary \ref{cor:2} has applications to certain types of finite skew product extensions of $n$-adic odometers as follows. 
For any integer $n \geq 2$, let $X_n = \{0,\dots, n-1\}^\mathbb{N}$ be given the product topology.
The \emph{$n$-adic odometer} is the map $\Omega_n: X_n \rightarrow X_n$ defined as addition by $1$ with infinite carry to the right:
\begin{equation}
\label{eq:odometer}
\Omega_n : x = (x_1,x_2, \dots) \mapsto (0, 0, \dots, x_{\kappa(x)} + 1, x_{\kappa(x) + 1}, x_{\kappa(x) + 2}, \dots ),
\end{equation}
where $\kappa(x) \in \N \cup  \{+\infty\}$ is the smallest index $k \in \N$ so that $x_k \neq n-1$ or $+\infty$ if no such index exists.

Denote by $\Gamma^+$ the free group on the countably infinite set of generators $\{\gamma_i:~i \in \N\}$.
Let $n \geq 2$ be an integer and $G$ a subgroup of $\Pi_d$. For any $\psi_+ \in \mathrm{Hom}(\Gamma^+,G)$, we define the skew product over the $n$-adic odometer $E_{\psi_+}: X_n \times \{1,\dots, d\} \rightarrow X_n \times \{1,\dots, d\}$ as
\begin{equation}
\label{eqn:skew}
E_{\psi_+}(x,m) = \big(\Omega_n(x), \psi_+(\gamma_{\kappa(x)})\,(m)\big),
\end{equation}
where $\kappa(x)$ is as above. The image $E_{\psi_+}(x,m)$ is well defined unless $\kappa(x)=+\infty$, but we ignore this issue because points have well defined orbits off a countable set.

The space $\mathrm{Hom}(\Gamma^+,G)$ parameterizes these skew products and comes with a natural product measure as in Definition \ref{def:random cover} in \S \ref{sect:measures}. We will denote this measure by $\mu_+$.
\begin{theorem}
\label{thm:skew}
Let $n \geq 2$ be an integer and $G$ be a subgroup of $\Pi_d$ that acts transitively on $\{1,\ldots, d\}$. Then for $\mu_+$-almost every $\psi_+ \in \mathrm{Hom}(\Gamma^+,G)$, the skew product $E_{\psi_+}$ is uniquely ergodic.
\end{theorem}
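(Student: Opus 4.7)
The plan is to realize the odometer skew products $E_\psi$ as first return maps for translation flows on covers of a carefully chosen base surface, and then apply Corollary~\ref{cor:2} together with Corollary~\ref{cor:lifting}. For each $n\geq 2$, the first step is to construct an $n$-adic analog of the Chamanara surface: a finite area translation surface $(S_n,\alpha_n)$ of infinite topological type whose vertical flow, restricted to a natural horizontal cross-section $\Sigma$, realizes the $n$-adic odometer $\Omega_n$. The self-similarity of this surface forces $(S_n,\alpha_n)$ to admit an affine pseudo-Anosov, and hence a hyperbolic element in its Veech group $V(S_n,\alpha_n)$. Consequently the vertical Teichm\"uller trajectory $g^t(S_n,\alpha_n)$ is periodic in $\sO(S_n,\alpha_n)$, so in particular non-divergent, and the hypothesis of Theorem~\ref{thm:2} is satisfied.

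Next, I would choose a basepoint on $\Sigma$ and a free basis $\ell_1,\ell_2,\ldots$ of $\pi_1(S_n)$ designed so that $\ell_k$ is the concatenation of the vertical flow segment starting at a generic point $x$ with $k_x=k$ together with a short return path in $\Sigma$. With this choice, a cover $(\tilde S_\psi,\tilde\alpha_\psi)\in\Cov_G(S_n,\alpha_n)$ with monodromy $\psi:\pi_1(S_n)\to G$ has first return map on the preimage of $\Sigma$ equal to precisely $E_\psi$ on $X_n\times\{1,\ldots,d\}$. Identifying $\pi_1(S_n)$ with the free group $\Gamma$ via $\ell_k\mapsto \ell_k$, the product measure $\mu_{n,G}$ on $\mathrm{Hom}(\Gamma,G)$ defining random $\psi$ coincides under this identification with the measure $m_G$ on $\Cov_G(S_n,\alpha_n)$ constructed in \S\ref{sect:measures}, since both are the Bernoulli measure assigning each generator an independent uniform element of $G$. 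Connectedness of $(\tilde S_\psi,\tilde\alpha_\psi)$ corresponds to transitivity of $\psi(\Gamma)$, which is available almost surely because $G$ is already transitive.

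With these identifications in place, Corollary~\ref{cor:2} immediately yields that for $\mu_{n,G}$-almost every $\psi$, the translation flow on $(\tilde S_\psi,\tilde\alpha_\psi)$ is ergodic. To upgrade to unique ergodicity of $E_\psi$, I would first verify that the vertical translation flow on the base $(S_n,\alpha_n)$ is uniquely ergodic; this is known for the classical Chamanara surface and generalizes to the $n$-adic version either by direct self-similarity or via the results of \cite{Hinf} applied to its non-divergent Teichm\"uller trajectory. Corollary~\ref{cor:lifting} then promotes ergodicity of the translation flow on the cover to unique ergodicity. Finally, because the vertical flow on $(\tilde S_\psi,\tilde\alpha_\psi)$ is a suspension of $E_\psi$ over the preimage of $\Sigma$ with roof function bounded and bounded away from zero, unique ergodicity of the flow transfers to unique ergodicity of the return map $E_\psi$.

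The main obstacle is the opening step: constructing $(S_n,\alpha_n)$ so that it simultaneously has finite area, infinite topological type, a pseudo-Anosov affine symmetry, and a cross-section whose return map is literally $\Omega_n$ with generating loops $\ell_k$ indexed by $k_x$ in the precise way required to make the monodromy formula $\psi(\ell_{k_x})$ appear in the lifted return map. Some care is also required to verify that the Bernoulli measure $\mu_{n,G}$ pulls back to $m_G$ under the specific basis chosen, rather than some measure equivalent but non-equal to it. Once these geometric and measure-theoretic matchings are carried out cleanly, the non-divergence, the unique ergodicity of the base, and the passage between the translation flow on the cover and the skew product $E_\psi$ all follow from the machinery already developed in the paper.
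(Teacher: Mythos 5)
Your proposal is correct and matches the paper's own proof in structure: both realize the skew products $E_\psi$ as first-return maps of the translation flow on covers of the $n$-adic Chamanara surface $(S_n,\alpha_n)$, use the diagonal pseudo-Anosov to get periodicity (hence non-divergence) of the Teichm\"uller trajectory, invoke Corollaries~\ref{cor:2} and~\ref{cor:lifting} for almost-sure unique ergodicity of the lifted flow, and identify $m_G$ with $\mu_{n,G}$. The only cosmetic difference is that the paper derives unique ergodicity of the base translation flow immediately from unique ergodicity of the $n$-adic odometer (since the flow is a suspension of it), whereas you reach for self-similarity or \cite{Hinf}; also, your worry about the roof function is unnecessary since the return time to the cross-section is identically $1$, and your worry about the choice of basis is handled by Lemma~\ref{lem:measPres}, which makes the product measure basis-independent.
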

The connection between translation flows and $n$-adic odometers comes from the fact that the suspension flow over an $n$-adic odometer is measurably isomorphic to the translation flow on the infinite genus surface first studied by Chamanara. The skew-products described here have a suspension given by translation flow on a cover of Chamanara's surface. As such, the theorem follows as a consequence of Corollaries \ref{cor:lifting} and \ref{cor:2}. We prove this result at the beginning of \S \ref{sect:Chamanara}.

\subsection*{Devious covers} 
The above discussion has left open a natural question: What happens if $g^t(S,\alpha)$ is non-divergent in $\sO(S,\alpha)$, but the Teichm\"uller orbit of the connected cover $(\tilde S, \tilde \alpha)$ only accumulates
on disconnected covers in $\tilde \sO_G(S,\alpha)$. We call such covers {\em devious}. 

Apparently there is no {\em a priori} reason why the translation flow on a devious cover should be ergodic or non-ergodic. We illustrate this by example in \S \ref{sect:evil covers}.

In \S \ref{sect:Chamanara} after proving Theorem \ref{thm:skew}, we 
concentrate on devious covers of the surface $(S_2,\alpha_2)$ 
of Chamanara which is related to the $2$-adic odometer. Devious $G$-covers with non-ergodic translation flow
occur for all $G$ for fairly trivial reasons in this case. (The surface $(S_2,\alpha_2)$ has horizontal saddle connections which can connect the surface but are not seen by the translation flow.)
However we also exhibit devious covers of $(S_2,\alpha_2)$ whose translation flow is uniquely ergodic. Our construction of such covers works as long as the subgroup $G \subset \Pi_d$ is large enough to contain two subgroups $H_1$ and $H_2$ which fail to act transitively on $\{1,\ldots, d\}$ but so that $H_1 \cup H_2$ generates a subgroup of $G$ which does act transitively. 

The translation flow on Chamanara's surface is special for many reasons, including that the Teichm\"uller trajectory of this surface is periodic in $\sO(S_2, \alpha_2)$, and so we consider a different surface in \S \ref{sect:ladder surface}. We consider double covers of affine images of a translation surface we call the ladder surface, $(S_\ladder,\alpha_\ladder)$. (See Figure \ref{fig:ladder_surface} on page \pageref{fig:ladder_surface}.)
The ladder surface has a non-elementary Veech group containing a parabolic and a reflection.
We produce several possible behaviors of the translation flow on double covers of $A(S_\ladder,\alpha_\ladder)$:
\begin{enumerate}
\item[(L1)] For some $A \in \SL_\pm(2,\R)$, there are no devious double covers of $A(S_\ladder,\alpha_\ladder)$, i.e. 
for every connected double cover $(\tilde S,\tilde \alpha)$, the forward orbit $g^t A(\tilde S,\tilde \alpha)$ accumulates on a connected cover in $\tilde \sO_{\Z_2}(S_L,\alpha_L)$ so that $A(\tilde S,\tilde \alpha)$ always has ergodic translation flow.
\item[(L2)] For some $A \in \SL_\pm(2,\R)$, there are devious double covers of $A(S_\ladder,\alpha_\ladder)$ but nonetheless every connected double cover has ergodic translation flow.
\item[(L3)] For some $A \in \SL_\pm(2,\R)$, there are devious double covers of $A(S_\ladder,\alpha_\ladder)$ and every devious double cover has non-ergodic translation flow.
\end{enumerate}
Let us briefly summarize how to distinguish the cases above. We use $V' \subset \SL_\pm(2,\R)$ to denote the
known Veech group of $(S_L,\alpha_L)$. In all the cases we assume that $g^t A V'$ is non-divergent in $\SL_\pm(2,\R) / V'$, which implies that $g^t A(S_L, \alpha_L)$ is non-divergent in $\sO(S_L,\alpha_L)$. 
We find an infinite index subgroup $\tilde V' \subset V'$ with the property that $\tilde V' \subset V(\tilde S, \tilde \alpha)$ for every double cover $(\tilde S, \tilde \alpha)$  of $(S_L, \alpha_L)$. 
We are in case (L1) if $g^t A \tilde V'$ is non-divergent in $\SL_\pm(2,\R) / \tilde V'$. (Statement (L1) follows from Proposition \ref{prop:non-divergence covers} of \S \ref{sect:ladder surface}.) Now assume $g^t A \tilde V'$ is divergent.
We describe in Theorem \ref{thm:main ladder} a combinatorial measurement $v(A)$ of the divergence rate
of the trajectory $g^t A \tilde V'$ in $\SL_\pm(2,\R)/\tilde V'$. As $v(A)$ decreases the trajectory diverges quicker. We show that if $v(A)>\varphi^2$ where $\varphi$ is the golden mean then we are in case (L2). On the other hand if $v(A)<\varphi^2$, then we are in case (L3). (Statements (L2) and (L3) appear again in Theorem \ref{thm:main ladder}.) The sharpness of this transition demonstrates the power of the methods used to distinguish ergodic and non-ergodic behavior. We wonder:
\begin{enumerate}
\item[(L2$\frac{\text{1}}{\text{2}}$)] Could there be an $A  \in \SL_\pm(2,\R)$ (with $v(A)=\varphi^2$) so that some devious double covers
of $(S_\ladder,\alpha_\ladder)$ exhibit ergodic translation flow while others exhibit non-ergodic translation flow?
\end{enumerate}

\subsection*{Acknowledgments}
The authors would like thank the anonymous referee for numerous useful comments which greatly improved the exposition. This project began at ICERM which provided a stimulating environment. 
W. P. H. was supported by N.S.F. Grants DMS-1101233 and DMS-1101233 as well as by a PSC-CUNY Award (funded by The Professional Staff Congress and The City University of New York). R. T. was supported by BSF grant 2010428, ERC starting grant DLGAPS 279893, NSF Postdoctoral Fellowship DMS-1204008, and an AMS-Simons Travel Grant. The quote from on the first page is taken from \emph{Surfaces} in \cite{RyanPoems}.

\section{Context on finite area flat surfaces of infinite topological type}

There has been an increased interest in the study of the dynamics and geometry of flat surfaces of infinite genus.
Unlike classical flat surfaces (which are compact flat surfaces of finite type), there is no natural space for 
parametrizing flat metrics for all surfaces of a given topological type. This gives the first obstacle to utilizing tools from the theory of compact surfaces. Different techniques have been developed to overcome this fundamental shortcoming, which prevents us from developing a theory to answer one of the most basic questions: whether the translation flow on a given flat surface of infinite type and finite area is ergodic or not.

Most studies concentrate on the case of the surface having finite or infinite area (a notably exception being \cite{Hinf}, where the methods work for surfaces of finite and infinite area). Such choice has great implications to the tools used, the results obtained, and the method of construction used to produce examples or to define some ``spaces of surfaces''. A common tool in both contexts is the use of Veech groups, which are a sort of symmetry groups of the surface. For compact flat surfaces, these are always discrete subgroups of $SL(2,\mathbb{R})$. Flat surfaces of finite type with non-trivial Veech groups are part of a very deep theory
which has grown out of the foundational contributions of Thurston \cite{T88} and Veech \cite{V}, so the fact that they can be used in the infinite type setting is encouraging. 

We will concentrate here on the development of the theory of flat surfaces of infinite type and  finite area, though there is also a rapidly developing theory of flat surfaces of infinite type and infinite area (where often interest centers on infinite abelian covers of finite type flat surfaces).

To our knowledge, the first papers on dynamics on flat surfaces of infinite type are those which come out of the infinite step polygonal billiards introduced in \cite{troubetzkoy:infinite, infinite-step} through the unfolding procedure. All such surfaces considered were of finite area and came from ``rational'' polygons, i.e., the angles of the billiard from which the surfaces were constructed satisfied some rationality conditions. Ergodic properties as well as topological results were obtained for a large class of these types of surfaces. The approach of these articles is very different from the approach here, because arguments do not make use of Veech groups. 

The seminal paper of Chamanara \cite{Chamanara04} introduced a 1-parameter family of flat surfaces of infinite type and of finite area with a non-trivial Veech group. The main results of that paper discussed the Veech groups that appear. Most importantly, even though the surfaces constructed possess many symmetries, the Veech group is never a lattice for any surface arising in this construction. We review Chamanara's construction in section \ref{sect:evil covers} and apply some of our results to spaces of covers thereof.

Another study of flat surfaces of infinite type and finite area has been the construction of Bowman which extends the Arnoux-Yoccoz family of flat surfaces to include a surface of infinite genus and finite area \cite{Bowman13}. This surface of infinite genus and finite area admits a affine automorphism with hyperbolic derivative. Moreover, it was found that the Veech group of this surface is isomorphic to $\mathbb{Z}\times \mathbb{Z}_2$ and that the directions preserved by the hyperbolic automorphism correspond to uniquely ergodic translation flows.

In \cite{Hinf}, a construction of Thurston is modified to produce infinite genus flat surfaces with non-elementary Veech groups. The construction sometimes produces finite area surfaces,
and in this case the translation flow can be shown to be uniquely ergodic on various affine images of the surfaces.

The work \cite{rodrigo:erg} addresses the question of ergodicity of translation flows for surfaces of finite area. The hypotheses of the main results are 
independent of topological type and therefore can be used to determine when the translation flow of a flat surface of infinite genus and finite area is ergodic or uniquely ergodic. We use this criterion in section \ref{sec:erg} to study ergodic properties of translation flows on covers of infinite translation surfaces.

Finally, in \cite{LT:models} a way of constructing flat surfaces of finite area and infinite genus is developed through a connection to adic and cutting and stacking transformations,
generalizing constructions of Bufetov in the classical case \cite{Bufetov13}.
Among other things, it is shown there that translation flows on surfaces of infinite genus and finite area can exhibit behavior which does not occur for translation flows on compact flat surfaces. 
\compat{I'm commenting out this. I think it is not true unless we place more hypotheses on the compact surface. I think it is true if we assumed the base compact surface recurs modulo its Veech group. ---- In the present article, we also obtain results which are also not present for translation flows on compact surfaces, namely the existence of devious (unbranched) covers.}


In general it is not well understood what Veech groups can arise for a flat surface of infinite type and finite area. In particular, it is unknown whether there exists a flat surface of infinite type and finite area whose Veech group is a lattice in $\SL_\pm(2,\mathbb{R})$. 
(In contrast, it is known that for practically any subgroup $G$ of $\SL_\pm(2,\mathbb{R})$, there is a flat surface of infinite type and infinite area whose Veech group is $G$ \cite{PSV11}.) 

As it can be seen, Veech groups have played a significant role in the study of translation flows. The properties of translation flows so far obtained for surfaces of infinite genus and finite area are mostly similar to those of compact flat surfaces.

In this paper, we broaden the scope of known phenomena of translation flows on flat surfaces by utilizing the structure provided by coverings to produce spaces of flat surfaces of infinite type and finite area which are invariant under Teichm\"uller deformations, and provide sufficiently interesting Teichm\"uller dynamics so that varied
phenomena appear for corresponding translation flows. This construction allows us to determine when the lift of a translation flow to a cover retains ergodic properties which the base surface possesses, such as unique ergodicity. This approach thus 
overcomes some of the problems created by not having a natural parameter space
for infinite type flat surfaces.

\section{Background on translation surfaces}
\label{sect:background}

Let $(S, \alpha)$ be a translation surface and let $Z \subset S$ denote the zeros of $\alpha$. The {\em straight line flow} on $(S,\alpha)$ in direction $\theta$ is the flow 
$F^t_\theta:S \to S$ defined for $t \in \R$
given in local coordinates by $F^t_\theta(x,y)=(x+t \cos \theta, y +t \sin \theta)$ away from the zeros of $\alpha$.
We reserve the name {\em translation flow} for the straight line flow $F^t_\theta$ where
$\theta=0$. This is the flow on $S$ determined by the rightward unit vector field $X$. We will use $Y$ denote the upward unit vector field. 

A more global definition of the straight line flows can be done as follows. Since $\alpha$ is holomorphic, the 1-forms $\Re(\alpha)$ and $\Im(\alpha)$ are harmonic, and thus closed. Therefore, the distributions $\mathrm{ker}\, \Im(\alpha)$ and $\mathrm{ker}\, \Re(\alpha)$ define a pair of foliations away from $Z$, the \emph{horizontal and vertical} foliations. The generators of the distributions in the unit tangent bundle of $S-Z$ are the vector fields $X_\alpha$ and $Y_\alpha$ which generate, respectively, the \emph{horizontal and vertical} flows. The translation flow, as defined above, then corresponds to the horizontal flow. From this point of view, we will denote by $\varphi_t^{X_\alpha}$ and $\varphi_t^{Y_\alpha}$, respectively, the horizontal and vertical flows.

There is a group of deformations of the flat metric on $(S,\alpha)$ which is parametrized by the group $GL(2,\mathbb{R})$. We will mostly be interested in the action of the area preserving subgroup $\SL_\pm(2,\R) \subset \GL(2,\R)$, and this is the $\SL_\pm(2,\R)$ action mentioned in the introduction.
Fix a matrix $A \in \GL(2,\R)$. We get new (non-conformal) local coordinate charts to the plane by postcomposing the charts on $(S,\alpha)$ to $\C$ by the real-linear map 
$$A:\C \to \C; \quad x+iy \mapsto a_{1,1} x + a_{1,2}y + i (a_{2,1} x+a_{2,2} y).$$
Then, we get a new Riemann surface structure $S'$ on $S$ by pulling back the complex structure using these deformed
charts, and the charts determine a new holomorphic $1$-form $\alpha'$ on $S'$. We define $A(S,\alpha)=(S', \alpha')$. The action of the rotation subgroup $\SO(2,\mathbb{R})$ parametrizes the directional flows on a given surface $(S,\alpha)$: for $\theta \in \SO(2,\mathbb{R}) \simeq S^1$, the horizontal flow on $\theta(S,\alpha)$ corresponds to the straight line flow on $(S,\alpha)$ in direction $\theta$.

\compat{Please read this paragraph for correctness and clarity.}
Let $(S,\alpha)$ and $(S',\alpha')$ be translation surfaces. We say they are {\em translation equivalent} if there is a homeomorphism $h:S \to S'$ which is a locally a translation in local coordinate charts provided by the $1$-forms. 
The {\em Veech group} of $(S,\alpha)$ is the subgroup $V(S,\alpha) \subset \SL_\pm(2,\R)$ of elements $A \in \SL_\pm(2,\R)$ so that
$A(S,\alpha)$ and $(S,\alpha)$ are translation equivalent. An {\em affine homeomorphism} from a translation surface $(S,\alpha)$ to another 
translation surface $(S',\alpha')$ is a diffeomorphism $\phi:S \to S'$ whose derivative $D(\phi)$ is constant when measured using local coordinates provided by the $1$-forms. By considering the derivative $D(\phi)$ to be a real linear map we have $D(\phi) \in \GL(2, \R)$.
We can describe this condition in terms of the horizontal and vertical vectors on $(S,\alpha)$ and $(S,\alpha')$ using the equation
$$\left( \begin{array}{r} 
\phi_\ast X_\alpha \\
\phi_\ast Y_\alpha
\end{array}\right)=A \left( \begin{array}{r} 
X_{\alpha'} \\
Y_{\alpha'}
\end{array}\right),
$$
where $\phi_\ast$ denotes the push forward action on vector fields and
$A=D(\phi)$ is a $2 \times 2$ matrix (the {\em derivative}) .
Observe that the statement that $A(S,\alpha)$ and $(S',\alpha')$ are translation equivalent is equivalent to the statement that there is a affine homeomorphism $\phi:(S,\alpha) \to (S', \alpha')$ with derivative $A$. 
The {\em affine automorphism group of $(S,\alpha)$}, $\Aff(S,\alpha)$ is the group of all affine homeomorphisms
from $(S,\alpha)$ to itself. By the prior observation, we have 
$$D\big(\Aff(S,\alpha)\big)=V(S,\alpha).$$

\section{Finite covers of infinite surfaces}
\label{sec:covers}

In this section, we work out the theory of spaces of finite degree covers of a translation surface $(S,\alpha)$ of infinite topological type. We describe the topology of the space of covers $\text{Cov}_G(S,\alpha)$, mentioned in the introduction, in subsection \ref{sec:spaces}. In subsection \ref{sect:measures}, we place a natural Borel measure on this space.
Subsection \ref{sect:disconnected} discusses why disconnected covers should be considered rare.

\subsection{Spaces of covers}
\label{sec:spaces}
Let $S$ be a topological surface without boundary having infinite topological type (i.e., with infinitely generated fundamental group) which is realizable as a Riemann surface. By Rad\'o's Theorem $S$ is second-countable 
\cite{Rado1925} \cite[\S 1.3]{HubbardTeich1}. 
Choose a basepoint $s_0 \in S$. 
It follows from Richards' classification of surfaces \cite[Theorem 3]{Richards}
that the fundamental group $\pi_1(S,s_0)$ is isomorphic to the free group with countably many generators. 
\compat{The paper ON FENCHEL-NIELSEN COORDINATES ON TEICHM ULLER
SPACES OF SURFACES OF INFINITE TYPE
by
D. ALESSANDRINI, L. LIU, A. PAPADOPOULOS, W. SU, AND Z. SUN
takes a similar point of view of this fact. See the first few paragraphs of \S 4.}

A {\em covering} of $S$ is a pair $(p, \tilde S)$, where $\tilde S$ is a topological surface and $p: \tilde S \to S$ is a topological covering. Two covers $(p_1, \tilde S_1)$ and $(p_2, \tilde S_2)$ are {\em isomorphic} if there is a homeomorphism $h:\tilde S_1 \to \tilde S_2$ so that $p_1=p_2 \circ h$.

We recall idea of the monodromy action from covering space theory. Let $p:\tilde S \to S$ be a covering map.
The {\em monodromy action} is the right action on the fiber over the basepoint, $p^{-1}(s_0)$,
defined by
$$p^{-1}(s_0) \times \pi_1(S,s_0) \to p^{-1}(s_0); \quad \tilde s \cdot \gamma := \tilde \beta(1),$$
where $\beta:[0,1] \to S$ is a loop in the class of $\gamma \in \pi_1(S,s_0)$, and $\tilde \beta:[0,1] \to \tilde S$
is a lift (i.e., $p \circ \tilde \beta=\beta$) so that $\tilde \beta(0)=\tilde s$. It should be noted that the definition of $\tilde s \cdot \gamma$
is independent of the choice of $\beta$. (Once $\beta$ is chosen,
its lift $\tilde \beta$ is determined based on the condition that
$\tilde \beta(0)=\tilde s$, and $\tilde \beta(1)$ depends only on $\gamma$.)

It is a basic observation from covering space theory that the monodromy action determines the cover up to isomorphism. This includes
disconnected covers of the connected surface $S$. We will briefly show how to build a cover from an action. Concretely, given any right action of $\pi_1(S,s_0)$
on a discrete set $J$, we can build a cover of $S$ with this action as the monodromy action. To see this, fix such an
action. For each $j \in J$, let $\text{Stab}(j) \subset \pi_1(S,s_0)$ be the stabilizer of $j$. 
We can then build a cover $\tilde S_j$ as the quotient of the universal cover of $S$ by $\text{Stab}(j)$. In the cover $\tilde S_j$, the lifts of the basepoint are then naturally identified with elements of the orbit $[j]$ of $j$ under the action of $\pi_1(S,s_0)$. 
It can be observed that if $k \in [j]$, then there is a cover isomorphism from $\tilde S_j$ to $\tilde S_k$ which respects the identification between $[j]$ and the lifts of the basepoint on each surface. It may be observed that by taking the disjoint union of such surfaces over all orbits under the action, we obtain a cover,  
\begin{equation}
\label{eq:built cover}
\tilde S=\bigsqcup_{[j] \subset J} \tilde S_j
\end{equation}
of $S$ with the desired monodromy action. (Here we are picking a representative $j$ from each orbit [j].) Furthermore,
actions on two discrete sets $J$ and $J'$ determine isomorphic covers if and only if the actions are conjugate up to bijection,
i.e., if there is a bijection $f:J \to J'$
so that $f(j \cdot \gamma)=f(j) \cdot \gamma$ for all $\gamma \in \pi_1(S, s_0)$ and $j \in J$.

We will now specialize this discussion to finite covers of $S$.
Suppose $p:\tilde S \to S$ is a covering map of degree $d$. 
Let $\Pi_d$ be the symmetric group acting by permutations of 
$\{1, \ldots, d\}$, and let 
$$\ell:\{1, \ldots, d\} \to p^{-1}(s_0)$$ 
be a labeling (a bijection) to the fiber. The associated 
{\em monodromy representation} (which depends on the labeling) is the group homomorphism
$M_\ell: \pi_1(S,s_0) \to \Pi_d$ defined so that 
$$M_\ell(\gamma)(i)=\ell^{-1} \big(\ell(i) \cdot \gamma^{-1}\big) \quad
\text{for all $\gamma \in \pi_1(S,s_0)$ and all $i \in \{1, \ldots, d\}$,}
$$
where we have multiplied on the right by $\gamma^{-1}$ to make this map into a homomorphism (as opposed to an anti-homomorphism).
Conversely, such a representation determines an action on $\{1,\ldots, d\}$ and so, from the above discussion, 
a choice of a monodromy representation $\pi_1(S,s_0) \to \Pi_d$ determines
a $d-$fold cover of $S$. Given two such representations, the covers are isomorphic if and only if they differ by conjugation by an element of $\Pi_d$, which has the effect of changing the labeling function. Thus, the space of $d$-fold covers of $S$ up to isomorphism is canonically identified with
$$\Pi_d \bs \Hom\big(\pi_1(S,s_0),\Pi_d\big),
\quad \text{where $\Pi_d$ is acting by conjugation.}$$ 
We endow $\Hom\big(\pi_1(S,s_0),\Pi_d\big)$ with the topology of pointwise convergence (or equivalently, the subspace topology coming from the inclusion
of $\Hom\big(\pi_1(S,s_0),\Pi_d\big)$ into the product space $\Pi_d^{\pi_1(S,s_0)}$), and this space of covers gets the quotient topology.



As in the introduction if $G$ is a subgroup of $\Pi_d$
for an integer $d \geq 2$, we say that a cover $\tilde S$ has {\em monodromy in $G$} if there is a representation $\pi_1(S,s_0) \to G$ which determines a cover isomorphic to $\tilde S$. 
Note that this concept is independent of the basepoint. We define $\Covt_G(S)$ to be the collection of all covers of $S$ with monodromy in $G$
up to isomorphism of covers. Such covers are thus determined by elements of
$\Hom\big(\pi_1(S,s_0),G\big) \subset \Hom\big(\pi_1(S,s_0),\Pi_d\big)$. 
So, the space $\Covt_G(S)$ is in bijective correspondence with 
\begin{equation}
\label{eq:space of covers}
\Pi_d \bs \Hom\big(\pi_1(S,s_0),G\big).
\end{equation}
We use the identification with this quotient space to topologize $\Covt_G(S)$.

\begin{proposition}
\label{prop:topological covers}
The space $\Covt_G(S)$ is homeomorphic to a Cantor set.
\end{proposition}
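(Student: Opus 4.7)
My plan is to verify Brouwer's topological characterization: a space is homeomorphic to a Cantor set if and only if it is nonempty, compact, metrizable, totally disconnected, and has no isolated points. The argument proceeds in two stages: first establishing these properties for $\Hom(\pi_1(S, s_0), G)$, then descending them through the finite quotient by the conjugation action of $\Pi_d$.

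For the first stage, since $S$ has infinite topological type, its fundamental group $\pi_1(S, s_0)$ is free on a countably infinite basis $\{\gamma_i\}_{i \in \N}$. Evaluation at these generators yields a natural bijection $\Hom(\pi_1(S, s_0), G) \to G^{\N}$, which is a homeomorphism when both sides carry the product topology arising from the discrete topology on $G$. Assuming $|G| \geq 2$ (the case $|G| = 1$ reduces the space to a single point and so must be implicitly excluded from the statement), $G^{\N}$ is the standard Cantor space: compact by Tychonoff, metrizable as a countable product of metric spaces, totally disconnected as a product of discrete spaces, and without isolated points because every basic cylinder neighborhood contains distinct sequences agreeing on its finitely many specified coordinates.

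For the second stage, I would descend these properties through the quotient
\begin{equation*}
q : \Hom(\pi_1(S, s_0), G) \to \Covt_G(S, \alpha) = \Pi_d \backslash \Hom(\pi_1(S, s_0), G)
\end{equation*}
coming from the continuous $\Pi_d$-action by conjugation. Since $\Pi_d$ is finite, $q$ is a closed finite-to-one continuous surjection, and the quotient is compact Hausdorff; metrizability descends via the explicit quotient metric $\bar d([h_1], [h_2]) = \min_{\sigma \in \Pi_d} d(h_1, \sigma \cdot h_2)$. For total disconnectedness, given distinct orbits $[h_1] \ne [h_2]$, the disjoint finite sets $\Pi_d \cdot h_1$ and $\Pi_d \cdot h_2$ can be separated by a clopen set $U$ in the Cantor space $\Hom(\pi_1(S, s_0), G)$; then $V = \bigcap_{\sigma \in \Pi_d} \sigma^{-1}(U)$ is $\Pi_d$-invariant, clopen as a finite intersection of clopen sets, still contains $\Pi_d \cdot h_1$ (which is $\Pi_d$-invariant and lies in $U$), and still misses $\Pi_d \cdot h_2$, so $q(V)$ is a clopen separator in the quotient. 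For the absence of isolated points, if $[h]$ were isolated its preimage would be the finite nonempty open set $\Pi_d \cdot h$, contradicting that Cantor spaces have none.

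I do not foresee a serious obstacle: the entire argument is a routine verification of Brouwer's characterization. The only mild subtlety is ensuring that the separating clopen sets can be taken $\Pi_d$-invariant, which is handled uniformly by the finite intersection trick above. It may be worth flagging in a remark that the implicit hypothesis $|G| \geq 2$ is required and that the proposition fails, in a trivial way, when $G$ is the trivial subgroup.
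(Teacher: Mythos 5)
Your proof is correct and follows essentially the same route as the paper's: verify Brouwer's characterization for $\Hom(\pi_1(S,s_0),G)$ and then push each property through the finite quotient by $\Pi_d$, constructing $\Pi_d$-invariant clopen separators for total disconnectedness (the paper does this via explicit cylinder sets $U_q$, you via the general fact that disjoint closed sets in a zero-dimensional compact metric space have a clopen separator followed by an intersection over $\Pi_d$ to enforce invariance, but these are cosmetically different versions of the same idea). Your remark that the statement silently requires $|G|\geq 2$ is a correct observation that the paper leaves implicit.
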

\begin{proof}
We use the characterization of Cantor sets as non-empty, perfect, compact, totally disconnected, and metrizable.
We already know that $\Hom\big(\pi_1(S,s_0),G\big)$ is a Cantor set and thus satisfies these properties.
Because $\Covt_G(S)$ is a quotient of the Cantor set $\Hom\big(\pi_1(S,s_0),G\big)$, we see it is non-empty
and compact. Because the equivalence classes have finite size, we see that because $\Hom\big(\pi_1(S,s_0),G\big)$ is perfect, so is $\Covt_G(S)$.
Also because of this, we can use a metric on $\Hom\big(\pi_1(S,s_0),G\big)$ and the use the Hausdorff metric restricted to equivalence classes to put a metric
on $\Covt_G(S)$. To see the space is totally disconnected, suppose $h_1, h_2 \in \Hom\big(\pi_1(S,s_0),G\big)$ have distinct images
in $\Covt_G(S)$. Then for any permutation $p\in \Pi_d$, there is an element $\gamma_{p} \in \pi_1(S,s_0)$
so that $h_1(\gamma_{p}) \neq p \cdot h_2(\gamma_{p}) \cdot p^{-1}$. 
For each permutation $q \in \Pi_d$, consider the following sets $U_q \subset \Hom\big(\pi_1(S,s_0),G\big)$:
$$U_q=\{h~:~\text{$h_1(\gamma_{p})=q \cdot h(\gamma_p) \cdot q^{-1}$ for all $p \in \Pi_d$}\}.$$
The set $U_q$ is clopen because it is a finite union of cylinder sets. (To see this, we write each $\gamma_p$ as a product of generators. Then there are finitely many values $h$ can take on the generators used so that each $h(\gamma_p)$ equals $q^{-1} h_1(\gamma_p) q$, i.e., so that $h \in U_q$.) Also observe $h_1 \in U_e$, where
$e \in \Pi_d$ is the identity element, and $h_2 \not \in U_q$ for any $q$ from the remarks above.
Thus the two complimentary sets
$$U=\bigcup_{q \in \Pi_d} U_q \quad \text{and} \quad V=\bigcap_{q \in \Pi_d} \Hom\big(\pi_1(S,s_0),G\big) \smallsetminus U_q$$
are both clopen while $h_1 \in U$ and $h_2 \in V$. Finally, we observe they are invariant under conjugation since
$$U=\bigcup_{q \in U_q} q^{-1} U_e q.$$
Thus $U$ and $V$ descend to clopen sets in $\Pi_d \bs \Hom\big(\pi_1(S,s_0),G\big)$ which separate
$[h_1]$ and $[h_2]$, proving that this quotient is totally disconnected.
\end{proof}

\compat{Please read this paragraph and the remark below. The referee asked for clarification here.}
Now suppose that we give the topological surface $S$ a translation structure,
$(S,\alpha)$. 
As in the introduction, we use $\Cov_G(S,\alpha)$ to denote
the space of covers of $(S,\alpha)$ with monodromy in $G$ up to translation equivalence. There is a natural map 
\begin{equation}
\label{eq:top cover to cover}
\Covt_G(S) \to \Cov_G(S,\alpha)
\end{equation}
which sends a topological covering $p:\tilde S \to S$ to $(\tilde S, \tilde \alpha)$
where $\tilde \alpha$ is obtained by pulling back $\alpha$ under $p$.
This map is clearly surjective, and we endow $\Cov_G(S,\alpha)$ with the finest topology so that this natural map is continuous. In other words, translation equivalence induces an equivalence relation on $\Covt_G(S)$, and $\Cov_G(S,\alpha)$
as a topological space is naturally identified with the resulting quotient.

\begin{remark}
\label{rem:primitivity}
Note that our definition of translation automorphism does no consider basepoints.
So, it is possible for two covers $(\tilde S_1, \tilde \alpha_1)$ and $(\tilde S_2, \tilde \alpha_2)$ of $(S,\alpha)$ to be translation equivalent without arising from an isomorphism of covering maps. However such pairs of can be ruled out if the surface $(S,\alpha)$ is geometrically primitive in the sense the deck group of the universal cover of $S$ is the same as the translation automorphism group of the universal cover (when endowed with the pullback translation structure).
\end{remark}

Observe that a homeomorphism $\phi:S \to S'$ between topological surfaces induces a homeomorphism
between their spaces of covers with monodromy in $G$ up to cover isomorphism. This is easily seen through the fundamental group.
In order to consider their fundamental groups, we choose basepoints $s_0$ and $s_0'$. To identify the fundamental groups,
we make a choice of a curve 
$\beta:[0,1] \to S'$ so that $\beta(0)=\phi(s_0)$ and $\beta(1)=s'_0$.
The choice of $\beta$ gives rise to a group homomorphism
\begin{equation}
\label{eq:curve needed}
\phi_\beta:\pi_1(S,s_0) \to \pi_1(S',s'_0); [\gamma] \mapsto [\beta^{-1} \bullet (\phi \circ \gamma) \bullet \beta],
\end{equation}
where $\bullet$ denotes path concatenation (with the path on the left being traversed first). This action allows $\phi$ to act on
the spaces of topological covers with monodromy in $G$. By identifying these spaces of covers as
in equation \ref{eq:space of covers}, we see that this action is given by
\begin{equation}
\label{eq:homeomorphism acting on covers}
\Covt_G(S) \to \Covt_G(S'); \quad
[h] \mapsto [h \circ \phi_\beta^{-1}],
\end{equation}
where $[h]$ is the $\Pi_d$-conjugacy class of a homomorphism 
$h:\pi_1(S,s_0) \to G$.
Note that while the action of $\phi$ on the fundamental group depended on the choice of the curve $\beta$,
the action on this space of covers does not, since the map $h \mapsto h \circ \phi_\beta^{-1}$
only changes by post-conjugation by a permutation when $\beta$ is changed.

A homeomorphism $(S,\alpha) \to (S',\alpha')$ between translation surfaces does not necessarily induce a homeomorphism 
from $\Cov_G(S,\alpha)$ to $\Cov_G(S',\alpha')$. (This fails for instance, if $(S,\alpha)$ admits translation automorphisms and $(S',\alpha')$ does not.) However, an affine homeomorphism does induce such a homeomorphism.
As above, this homeomorphism is induced by the map $h \mapsto h \circ \phi_\beta^{-1}$. We summarize this observation below.

\begin{proposition}
\label{prop:action on covers}
Let $\phi:(S,\alpha) \to (S',\alpha')$ be an affine homeomorphism with derivative $A \in \GL(2,\R)$,
and let $\phi_\beta:\pi_1(S,s_0) \to \pi_1(S,s'_0)$ be the group homomorphism as defined
above for some choice of curve $\beta$. If $(\tilde S, \tilde \alpha)$ is a cover
of $(S,\alpha)$ with monodromy homomorphism $h:\pi_1(S,s_0) \to G$, then 
$A(\tilde S, \tilde \alpha)$ is translation equivalent
to the cover of $(S',\alpha')$ with monodromy homomorphism $h \circ \phi_\beta^{-1}$.
Thus, map $h \mapsto h \circ \phi_\beta^{-1}$ induces a homeomorphism
$A_\ast:\Cov_G(S,\alpha) \to \Cov_G(S',\alpha')$, which depends only on the derivative
$A$ of the affine homeomorphism $\phi$.
\end{proposition}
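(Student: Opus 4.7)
The plan is to construct a canonical lift of $\phi$ to the covers, identify the monodromy of the resulting cover, and then verify that the assignment $h \mapsto h\circ\phi_\beta^{-1}$ descends to a homeomorphism on $\Cov_G$ depending only on the derivative $A$. For the first step I would simply post-compose: take $\tilde S$ with the new covering map $p':=\phi\circ p:\tilde S \to S'$ and the flat structure $\tilde\alpha':=(p')^{\ast}\alpha'$. Since $\phi$ has derivative $A$, this flat structure on $\tilde S$ coincides with $A\tilde\alpha$, so $A(\tilde S,\tilde\alpha)=(\tilde S,\tilde\alpha')$ is literally a cover of $(S',\alpha')$, giving the required translation equivalence.

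Next, I would choose $\ell':\{1,\ldots,d\}\to(p')^{-1}(s_0')$ by letting $\ell'(i)$ be the endpoint of the $p'$-lift of $\beta$ starting at $\ell(i)$, noting $\ell(i)\in(p')^{-1}(\phi(s_0))$ since $p'(\ell(i))=\phi(s_0)$. For $[\gamma']\in\pi_1(S',s_0')$, computing $M_{\ell'}([\gamma'])(i)$ requires lifting $\gamma'^{-1}$ through $p'$ starting at $\ell'(i)$ and then computing $(\ell')^{-1}$ of the result, which in turn involves lifting $\beta^{-1}$. Because lifts through $p'=\phi\circ p$ coincide with lifts through $p$ of the $\phi^{-1}$-image, the endpoint of interest is the endpoint of the $p$-lift starting at $\ell(i)$ of the loop $\phi^{-1}\circ(\beta\bullet\gamma'^{-1}\bullet\beta^{-1})$, whose class in $\pi_1(S,s_0)$ is precisely $\phi_\beta^{-1}([\gamma']^{-1})$. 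Comparing with the definition of $h$ then gives $M_{\ell'}=h\circ\phi_\beta^{-1}$.

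The map $h\mapsto h\circ\phi_\beta^{-1}$ is a continuous bijection between $\Hom\big(\pi_1(S,s_0),G\big)$ and $\Hom\big(\pi_1(S',s_0'),G\big)$ in the pointwise convergence topology, with continuous inverse $h'\mapsto h'\circ\phi_\beta$. It commutes with the $\Pi_d$-conjugation action, since conjugation acts on the left and $\phi_\beta^{-1}$ on the right, so it descends to a homeomorphism of the topological cover spaces $\Covt_G$ of Proposition \ref{prop:topological covers}; since isomorphic covers are translation equivalent it further descends to a homeomorphism $A_\ast:\Cov_G(S,\alpha)\to\Cov_G(S',\alpha')$.

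Finally, for independence of the choices of $\phi$ and $\beta$: two affine homeomorphisms $\phi_1,\phi_2$ with derivative $A$ satisfy $\phi_2=\tau\circ\phi_1$ for a translation automorphism $\tau$ of $(S',\alpha')$, and a short check shows that the two pushforward covers are identical as translation surfaces (with their $p'$ maps differing by $\tau$), hence equal in $\Cov_G(S',\alpha')$. Changing the connecting path $\beta$ alters $\phi_\beta$ by an inner automorphism of $\pi_1(S',s_0')$, which translates into changing $h\circ\phi_\beta^{-1}$ by post-conjugation by a single element of $G\subset\Pi_d$, preserving the $\Pi_d$-class. The main obstacle I anticipate is the monodromy calculation: careful bookkeeping of basepoints, of lifts through the two distinct cover maps, and of concatenation orientations is needed to correctly identify the loop class as $\phi_\beta^{-1}([\gamma']^{-1})$.
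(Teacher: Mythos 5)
Your proposal is correct and takes essentially the approach the paper has in mind. The paper states this proposition as a summary of observations made in the preceding prose (defining $\phi_\beta$ via the choice of connecting curve $\beta$, noting the induced map $[h]\mapsto[h\circ\phi_\beta^{-1}]$ on spaces of covers, and pointing out that changing $\beta$ only post-conjugates by a permutation) but never writes down the monodromy computation; you have supplied exactly those details. Your identification $A(\tilde S,\tilde\alpha)\simeq(\tilde S,(\phi\circ p)^*\alpha')$, the calculation showing $M_{\ell'}=h\circ\phi_\beta^{-1}$ by tracking lifts of $\beta\bullet\gamma'^{-1}\bullet\beta^{-1}$ through $p'=\phi\circ p$, and the independence checks (translation automorphisms for the choice of $\phi$ with fixed derivative, inner automorphisms for the choice of $\beta$) all check out.

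One small slip in logic: to descend from $\Covt_G(S)\to\Covt_G(S')$ to $\Cov_G(S,\alpha)\to\Cov_G(S',\alpha')$, the fact that isomorphic covers are translation equivalent is not what you need — that only explains why $\Cov_G$ is a quotient of $\Covt_G$. What you need is that the map respects the (coarser) translation-equivalence relation, i.e.\ that translation-equivalent covers of $(S,\alpha)$ push forward to translation-equivalent covers of $(S',\alpha')$. This does follow, and in fact it follows immediately from the first part of your argument: $A_\ast$ is literally implemented by $(\tilde S,\tilde\alpha)\mapsto A(\tilde S,\tilde\alpha)$, and the $\SL(2,\R)$-action preserves translation equivalence, since a map that is a local translation in one set of charts remains a local translation after postcomposing both charts by $A$. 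Worth stating explicitly, but not a gap.
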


As a consequence of this proposition, we observe that the Veech group of $(S,\alpha)$ acts on $\Cov_G(S,\alpha)$:

\begin{corollary}[The Veech group acts on covers]
Let $A \in \SL_\pm(2,\R)$ be an element of the Veech group of $(S,\alpha)$, and let $(\tilde S, \tilde \alpha) \in \Cov_G(S,\alpha)$. Then $A (\tilde S, \tilde \alpha)$ is also in $\Cov_G(S,\alpha)$.
\end{corollary}

\subsection{Measures on spaces of covers}
\label{sect:measures}
In this subsection, we will construct some natural
measures on our spaces of covers.
We begin by describing an abstract construction. Later in the subsection, we will specialize the discussion to
our setting of translation surfaces.

Let $\Gamma^+$ be the non-abelian free group with a countable generating set $\{\gamma_i~:~i \in \N\}$, and let $G \subset \Pi_d$ as above. We endow the space $\Hom(\Gamma^+,G)$ with its natural product topology,
which makes the $\Hom(\Gamma^+,G)$ homeomorphic to a Cantor set.
This is the coarsest topology so that 
for each $\eta \in \Gamma^+$ and each $\sigma \in G$, the set of the form 
$$\{h~:~\Hom(\Gamma^+,G)~:~h(\eta)=\sigma\}.$$
is open. Two ordered $k$-tuples of distinct elements $(e_1,\ldots,e_k) \in \N^k$ and $(\sigma_1 ,\ldots, \sigma_k) \in G^k$, determine a
{\em cylinder set} in $\Hom(\Gamma^+,G)$,
\begin{equation}
\label{eqn:cylinders}
\Cyl(e_1,\ldots,e_k; \sigma_1,\ldots,\sigma_k)=\{h~:~\Hom(\Gamma^+,G)~:~
\text{$h(\gamma_{e_i})=\sigma_i$ for $i=1,\ldots,k$}\}.
\end{equation}
Each cylinder set is both closed and open in the product topology, and the collection of cylinder sets generate the topology.

To characterize a Borel measure on $\Hom(\Gamma^+,G)$, it suffices to describe the measures of 
the cylinder sets. 
\begin{definition}
\label{def:random cover}
The {\em product measure} $\mu$ on $\Hom(\Gamma^+,G)$
is defined so that for every cylinder set we have
$$\mu\big(\Cyl(e_1,\ldots,e_k; \sigma_1,\ldots,\sigma_k)\big)=\frac{1}{|G|^k}.$$
This is the product measure induced on $\Hom(\Gamma^+,G)$
by the counting measure on $G$.
\end{definition}

We remark that this measure $\mu$ is interesting even in the case when $\Gamma^+$ is a finitely generated free group, and related questions remain open \cite{Puder13}.

Automorphisms of $\Gamma^+$ act on $\Hom(\Gamma^+,G)$. Concretely, if $\phi: \Gamma^+ \to \Gamma^+$ is an automorphism, then we can define
\begin{equation}
\label{eqn:homAction}
\phi_\ast:\Hom(\Gamma^+, G) \to \Hom(\Gamma^+, G); \quad h \mapsto h \circ \phi^{-1}.
\end{equation}

\begin{lemma}
\label{lem:measPres}
The action of any automorphism of $\Gamma^+$ preserves
the product measure $\mu$ on $\Hom(\Gamma^+,G)$. 
In particular, the measure $\mu$ is independent
of our choice of generating set.
\end{lemma}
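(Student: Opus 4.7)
The plan is to show $\phi_*\mu=\mu$ by verifying equality on the cylinder sets (\ref{eqn:cylinders}), which generate the Borel $\sigma$-algebra on $\Hom(\Gamma,G)$. Fix an automorphism $\phi$ of $\Gamma$ and a cylinder $C=\Cyl(e_1,\ldots,e_k;\sigma_1,\ldots,\sigma_k)$, and set $\eta_j:=\phi^{-1}(\gamma_{e_j})$. Then
$$\phi_*^{-1}(C)=\{h\in\Hom(\Gamma,G):h(\eta_j)=\sigma_j,\ j=1,\ldots,k\},$$
and because $\phi^{-1}$ is an automorphism, the tuple $\eta_1,\ldots,\eta_k$ is part of a free basis of $\Gamma$. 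I would choose $N$ large enough that all $\eta_j\in F_N:=\langle\gamma_1,\ldots,\gamma_N\rangle$, so that the computation reduces to $\mu(\phi_*^{-1}(C))=|E|/|G|^N$ where $E=\{\bar g\in G^N:\eta_j(\bar g)=\sigma_j,\ j=1,\ldots,k\}$.

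The heart of the argument will be to show that $\eta_1,\ldots,\eta_k$ extend to a free basis of $F_N$, not merely of $\Gamma$. For this I will use that $F_N$ is itself a free factor of $\Gamma=F_N*\langle\gamma_{N+1},\gamma_{N+2},\ldots\rangle$: since $\langle\eta_1,\ldots,\eta_k\rangle$ is a free factor of $\Gamma$, there is a retraction $r:\Gamma\to\langle\eta_1,\ldots,\eta_k\rangle$; restricting $r$ to $F_N$ produces a retraction of $F_N$ onto $\langle\eta_1,\ldots,\eta_k\rangle$, and the classical theorem that every retract of a free group is a free factor then yields the desired extension.

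Once $\eta_1,\ldots,\eta_k$ has been extended to a basis $\eta_1,\ldots,\eta_N$ of $F_N$, the word map $W:G^N\to G^N$, $\bar g\mapsto(\eta_j(\bar g))_{j=1}^N$, is a bijection (it is dual, via evaluation of homomorphisms, to the automorphism of $F_N$ sending $\gamma_j\mapsto\eta_j$). Thus $|E|=|G|^{N-k}$, giving $\mu(\phi_*^{-1}(C))=1/|G|^k=\mu(C)$. The second assertion of the lemma, independence of the generating set, then follows immediately because any two free bases of $\Gamma$ are related by an automorphism.

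The main obstacle I anticipate is the combinatorial group theory step of promoting the free-basis property from $\Gamma$ down to $F_N$; this rests on the theorem that every retract of a free group is a free factor. An alternative route would be to verify invariance directly for elementary Nielsen-type automorphisms (permutations, inversions, and $\gamma_i\mapsto\gamma_i\gamma_j$) and then extend by generation, but in countably infinite rank the generation of $\Aut(\Gamma)$ by such elementary moves is less clean than in the finite-rank setting, so the free-factor argument appears preferable.
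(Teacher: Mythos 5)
Your overall strategy is the same as the paper's: reduce the computation to a finite-rank free factor $F_N$, show that the words $\eta_j=\phi^{-1}(\gamma_{e_j})$ extend to a free basis of $F_N$, and count. The difficulty is the justification of that middle step. You invoke ``the classical theorem that every retract of a free group is a free factor.'' This is not a classical theorem --- it is (a version of) Bergman's retract problem for free groups, which is far from elementary and, to our knowledge, open in general. Citing it as a known fact is a genuine gap: even if the statement turns out to be true, the argument as written rests on an unproven assertion.

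The fix is immediate, and it is the route the paper takes (citing Puder's Claim 2.5). You have already observed the stronger fact that $X:=\langle\eta_1,\ldots,\eta_k\rangle$ is a free factor of the \emph{ambient} group $\Gamma$, not merely a retract. The relevant lemma is the ``sandwich'' statement: if $X$ is a free factor of $C$ and $X\subset B\subset C$, then $X$ is a free factor of $B$. (This follows, for instance, from the Kurosh subgroup theorem applied to $B\leq X*X'$: since $X\subset B$, one of the Kurosh factors of $B$ is $B\cap X=X$.) Applying this with $B=F_N$ and $C=\Gamma$ gives directly that $X$ is a free factor of $F_N$, and the rest of your count goes through unchanged. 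So the retraction detour should be removed and replaced by this sandwich lemma; with that substitution the proof is correct and essentially matches the one in the paper.
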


\begin{remark}[Proof in abelian case]
In the case when $G$ is abelian, $\Hom(\Gamma^+,G)$
can be identified with the topological group $G^\N$, and 
$\mu$ is Haar measure. In this case, the proposition follows from the naturality of Haar measure.
\end{remark}

\begin{proof}
Let $\phi: \Gamma^+ \to \Gamma^+$ be an automorphism, and let $\phi_\ast$ be its action
on $\Hom(\Gamma^+, G)$:
$$\phi_\ast:\Hom(\Gamma^+, G) \to \Hom(\Gamma^+, G); \quad
h \mapsto h \circ \phi^{-1}.$$
We will prove that $\phi_\ast$ preserves the product measure $\mu$ on $\Hom(\Gamma^+, G)$.
It suffices to prove that the measures of cylinder sets are preserved.
Let $\Cyl=\Cyl(e_1,\ldots,e_k; \sigma_1,\ldots,\sigma_k)$
be a cylinder set. We will prove that $\mu \circ \phi_\ast(\Cyl)=1/|G|^k$. 

Let
$X=\langle \gamma_{e_1},\ldots, \gamma_{e_k} \rangle \subset \Gamma^+$. Since $\Gamma^+=\langle \gamma_e~:~e \in \N\rangle$, there is a finite set $\{e_1',\ldots, e_m'\} \subset \N$ so that
$$\phi^{-1}(X) \subset \langle \gamma_{e_1'}, \ldots, \gamma_{e'_m} \rangle.$$
We'll call the subgroup on the right hand side of the equation $Y$. By viewing $\mu$ as the product of counting measures, we see
\begin{equation}
\label{eq:count}
\mu \circ \phi^{\ast}(\Cyl)=\frac{\#\{h \in \Hom(Y,G)~:~h \circ \phi^{-1}(\gamma_{e_i})=\sigma_i \quad \text{for $1 \leq i \leq k$}\}}{|G|^m}.
\end{equation}
So it suffices to show that the number of homomorphisms in the numerator is $|G|^{m-k}$. 

As above, we can find a finite set $\{e_1'', \ldots, e_n''\} \subset \N$ so that 
$$\phi(Y) \subset \langle \gamma_{e_1''},\ldots, \gamma_{e_n''}\rangle.$$
Call the set on the right hand side $Z$. Note that $X \subset Z$. 

We recall some basic definitions from the theory of free groups. 
A {\em basis} of a free group $F$ is a set $x_1,\ldots,x_k$ so that $F=\langle x_1 \rangle  \ast \ldots \ast \langle x_k \rangle$. 
A subgroup $H$ of a free group $F$ is a {\em free factor} if every (equivalently, some) basis of $H$ can be extended to a basis of $F$. 

Consider $X$, $Y$, and $Z$ as above.
Observe that $X$ is a free factor in $Z$. 
So, $X$ is a free factor in $\phi(Y)$ \cite[Claim 2.5]{Puder13}. 
That is,
we can extend $\{\gamma_{e_1},\ldots, \gamma_{e_k}\}$ to a free generating set of $\phi(Y)$.
Using $\phi^{-1}$, we can pull this back to a generating set of $Y$. So, we have $k \leq n$,
and there is a free generating set of $Y$ given by $\beta_1,\ldots, \beta_m$ so that
$$\beta_i=\phi^{-1}(\gamma_{e_i}) \quad \text{for $1 \leq i \leq k$.}$$
Since this set generates $Y$, we see that $\mathrm{Hom}(Y,G)$ is in bijective correspondence
with the possible images of $\{\beta_1, \ldots, \beta_m\}$. The last $m-k$ elements in this 
basis are irrelevant to the values of $\phi^{-1}(\gamma_{e_i})$, so we see
that there are exactly $|G|^{m-k}$ possible values which give homomorphisms
in the numerator of equation \ref{eq:count}.
\end{proof}

Recall that whenever $S$ is a topological surface of infinite topological type,
then its fundamental group $\pi_1(S,s_0)$ is isomorphic to a countably generated free group. It is a simple observation that conjugation
by a permutation preserves the measure $\mu_G$ constructed above. By identifying 
$\Hom(\Gamma^+,G)$ with $\Hom\big(\pi_1(S,s_0),G\big)$ and 
$\Covt_G(S)$ with $\Pi_d \bs \Hom\big(\pi_1(S,s_0),G\big)$, we obtain a measure
$\nu_G$ on $\Covt_G(S)$, which we call the {\em product measure} on $\Covt_G(S)$.

\begin{corollary}
\label{cor:action on topological covers}
Let $\phi:S \to S'$ be a homeomorphism between two topological surfaces of infinite topological type. Let $G \subset \Pi_d$ for $d \geq 2$, and let $\nu_G$
and $\nu_G'$ be the product measures on $\Covt_G(S)$
and $\Covt_G(S')$, respectively. Then, $\nu_G'$ is the pushforward
of the measure $\nu_G$ under the map $\Covt_G(S) \to \Covt_G(S')$
induced by $\phi$ as in equation \ref{eq:homeomorphism acting on covers}.
\end{corollary}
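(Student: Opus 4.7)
The plan is to reduce the statement to Lemma \ref{lem:measPres}. Both $\nu_G$ and $\nu_G'$ are constructed in three stages: pick an isomorphism from $\Gamma$ onto the relevant fundamental group; transport the product measure $\mu$ on $\Hom(\Gamma,G)$ to a measure on $\Hom(\pi_1,G)$; then push forward to the quotient by the $\Pi_d$-conjugation action. The first step I would do is confirm that stage two gives a measure on $\Hom\bigl(\pi_1(S,s_0),G\bigr)$ which is independent of the chosen isomorphism $\iota:\Gamma\to\pi_1(S,s_0)$: if $\iota_1,\iota_2$ are two such, then $\iota_2^{-1}\circ \iota_1$ is an automorphism of $\Gamma$, and Lemma \ref{lem:measPres} asserts that the induced action on $\Hom(\Gamma,G)$ preserves $\mu$, so the two transported measures coincide. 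In particular, $\nu_G$ and $\nu_G'$ are themselves canonical.

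Next I would unpack the map induced by $\phi$. By equation \ref{eq:homeomorphism acting on covers}, this map lifts to
\[
\Phi:\Hom\bigl(\pi_1(S,s_0),G\bigr)\to\Hom\bigl(\pi_1(S',s_0'),G\bigr);\quad h\mapsto h\circ \phi_\beta^{-1},
\]
where $\phi_\beta$ is the group isomorphism from equation \ref{eq:curve needed} (for some auxiliary path $\beta$). Choose any isomorphism $\iota:\Gamma\to\pi_1(S,s_0)$ and use $\iota':=\phi_\beta\circ \iota$ to identify $\Gamma$ with $\pi_1(S',s_0')$. Under these two identifications $\Phi$ becomes the identity on $\Hom(\Gamma,G)$, which trivially preserves $\mu$. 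Hence $\Phi$ pushes the product measure on $\Hom\bigl(\pi_1(S,s_0),G\bigr)$ forward to the product measure on $\Hom\bigl(\pi_1(S',s_0'),G\bigr)$, independent of the choice of $\beta$ used to build $\phi_\beta$ (by the independence established in the previous paragraph).

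To conclude, I would pass to the $\Pi_d$-quotients. The conjugation action of $\Pi_d$ is by left post-composition on homomorphisms, while $\Phi$ acts by right pre-composition, so these two actions commute. Therefore $\Phi$ descends to the quotient map $\Covt_G(S)\to\Covt_G(S')$ induced by $\phi$, and the pushforward of the product measure commutes with quotienting; this yields $\phi_\ast\nu_G=\nu_G'$. The only substantive point in the argument is the first one, invoking Lemma \ref{lem:measPres} to guarantee that all of these constructions are canonical; every other step is a formal check on definitions.
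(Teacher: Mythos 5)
Your proposal is correct and follows essentially the same route as the paper's proof: both reduce the claim to Lemma \ref{lem:measPres} by identifying each space of covers with $\Hom(\Gamma,G)$, observe that the induced map becomes (after identification) an automorphism of $\Gamma$ — which you arrange to be the identity by choosing $\iota'=\phi_\beta\circ\iota$, while the paper directly composes the identifications to get an automorphism — and then note that the $\Pi_d$-conjugation action commutes with precomposition so the measure-preservation descends to the quotient. Your extra paragraph verifying that the product measure on $\Hom(\pi_1(S,s_0),G)$ is independent of the chosen isomorphism to $\Gamma$ is a welcome bit of explicitness, but it is the same ingredient the paper uses implicitly.
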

\begin{proof}
We can identify each space
of covers with $\Hom(\Gamma^+,G)$
using isomorphisms to the fundamental groups.
As noted in equation \ref{eq:homeomorphism acting on covers},
the action of a homeomorphism on the space of covers is induced by a group isomorphism between the fundamental groups and via our identifications, an automorphism of $\Gamma^+$.
Lemma \ref{lem:measPres} 
tells us that the measure $\mu_G$ is invariant under such automorphisms.
Also the automorphism commutes with the (partially defined) $\Pi_d$-action.
Thus, our measures $\nu_G$
and $\nu_G'$ are the same in view of the identification of each space
of covers with $\Pi_d \bs \Hom(\Gamma^+,G)$.
\end{proof}

Now let $(S,\alpha)$ be a translation surface of infinite topological type.
Since $\Cov_G(S, \alpha)$ is a quotient of $\Covt_G(S)$, we obtain an
measure $m_G$ on $\Cov_G(S, \alpha)$ as the pushforward of $\nu_G$.
We call $m_G$ the {\em product measure} on  $\Cov_G(S, \alpha)$.

Recall that Proposition \ref{prop:action on covers} says that when $A \in \SL(2,\R)$ and $A(S,\alpha)$ is translation equivalent to $(S',\alpha')$, there is an induced homeomorphism $A_\ast$
from $\Cov_G(S,\alpha)$ to $\Cov_G(S',\alpha')$. This homeomorphism respects the product measures
on these spaces:

\begin{corollary}[Affine naturality of measures]
Let $(S,\alpha)$ be a translation surface of infinite topological type.
Let $A \in \SL_\pm(2,\R)$ and let $(S',\alpha')=A(S,\alpha)$. Then
$m_G'=m_G \circ A_\ast^{-1}$ where $m_G$ and $m_G'$ are the product measures
on $\Cov_G(S,\alpha)$ and $\Cov_G(S',\alpha')$.
\end{corollary}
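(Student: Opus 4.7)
The plan is to reduce the statement to the analogous naturality result for the topological spaces of covers, Corollary \ref{cor:action on topological covers}, by exploiting the fact that the affine action $A_\ast$ is induced on monodromy representations by exactly the same formula as the action coming from an underlying homeomorphism. First I would fix an affine homeomorphism $\phi:(S,\alpha) \to (S',\alpha')$ with derivative $A$; such a $\phi$ exists by the hypothesis $(S',\alpha')=A(S,\alpha)$. Forgetting the translation structures, $\phi$ is a homeomorphism of the underlying topological surfaces, so via the rule $h \mapsto h \circ \phi_\beta^{-1}$ of equation \ref{eq:homeomorphism acting on covers} it induces a map $\phi_\ast:\Covt_G(S) \to \Covt_G(S')$, and Corollary \ref{cor:action on topological covers} tells us that this $\phi_\ast$ pushes $\nu_G$ forward to $\nu_G'$.

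Next I would invoke Proposition \ref{prop:action on covers}, which describes $A_\ast:\Cov_G(S,\alpha) \to \Cov_G(S',\alpha')$ on monodromy representations by the very same rule $h \mapsto h \circ \phi_\beta^{-1}$. Letting $q$ and $q'$ denote the canonical quotient maps from isomorphism classes of topological covers to translation equivalence classes, this produces a commutative diagram
\begin{equation*}
\begin{CD}
\Covt_G(S) @>\phi_\ast>> \Covt_G(S') \\
@VqVV @VVq'V \\
\Cov_G(S,\alpha) @>A_\ast>> \Cov_G(S',\alpha')
\end{CD}
\end{equation*}
which only needs to be verified at the level of the defining formulae on $\Hom(\pi_1(S,s_0),G)$.

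Finally, since $m_G = q_\ast \nu_G$ and $m_G' = q'_\ast \nu_G'$ by definition of the product measures on the translation-equivalence quotients, functoriality of pushforward along the commutative square yields
$$A_\ast m_G \;=\; A_\ast q_\ast \nu_G \;=\; q'_\ast \phi_\ast \nu_G \;=\; q'_\ast \nu_G' \;=\; m_G',$$
which is exactly the claimed identity $m_G' = m_G \circ A_\ast^{-1}$. There is no substantive obstacle; the corollary is really a diagrammatic packaging of the earlier results. The only care needed is confirming that the formula describing $A_\ast$ in Proposition \ref{prop:action on covers} agrees with the topological formula in equation \ref{eq:homeomorphism acting on covers} so that the square truly commutes before any measure-theoretic step is taken, and this is immediate from comparing the two expressions.
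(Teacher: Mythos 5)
Your proposal is correct and follows essentially the same route as the paper: fix an affine homeomorphism $\phi$ with derivative $A$, note that $A_\ast$ lifts (via Proposition \ref{prop:action on covers}) to the induced map on $\Covt_G$, apply Corollary \ref{cor:action on topological covers} to push $\nu_G$ to $\nu_G'$, and conclude since $m_G, m_G'$ are defined as the quotient images of $\nu_G, \nu_G'$. Your commutative diagram simply makes the paper's phrase ``lifts to a homeomorphism'' explicit.
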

\begin{proof}
Because $A(S,\alpha)=(S', \alpha')$, there must be an affine homeomorphism 
$\phi:(S,\alpha) \to (S',\alpha')$ with derivative $A$.
The homeomorphism $A_\ast:\Cov_G(S,\alpha) \to \Cov_G(S',\alpha')$ lifts to a homeomorphism
$\Phi:\Covt_G(S,\alpha) \to \Covt_G(S',\alpha')$ by Proposition \ref{prop:action on covers}.
Since $\Phi_\ast(\nu_G)=\nu_G'$ by Corollary \ref{cor:action on topological covers}
and $m_G$ and $m_G'$ are obtained as images of $\nu_G$ and $\nu_G'$, we see
$m_G'=m_G \circ A_\ast^{-1}$.
\end{proof}

\subsection{Disconnected covers}
\label{sect:disconnected}
Let $G \subset \Pi_d$ and let $h \in \Hom\big(\pi_1(S,s_0),G\big)$. By interpreting $h$ as the monodromy action of the fundamental group of $S$ on the fibers of the basepoint, we obtain a cover $\tilde S$ of $S$ as in \S \ref{sec:spaces}.
This cover is explicitly described by equation (\ref{eq:built cover}), and we can see the following:

\begin{proposition}
\label{prop:connectivity}
The cover associated to $h \in \Hom\big(\pi_1(S,s_0),G\big)$ is connected if and only if the image
$h\big(\pi_1(S,s_0)\big)$ acts transitively on $\{1,2, \ldots, d\}$. 
\end{proposition}

In particular, in order to have connected covers of $(S,\alpha)$ with monodromy in $G$, the subgroup $G \subset \Pi_d$ must act transitively on $\{1,2, \ldots, d\}$. The goal of this subsection is to 
formulate the following precise version of the statement that the collection of all disconnected covers is small.

\begin{proposition}
\label{prop:measure zero}
Let $G \subset \Pi_d$ be a subgroup which acts transitively on $\{1,2, \ldots, d\}$. Let $S$ be a topological surface of infinite topological type. Then, $\nu_G$-almost every cover
in $\Covt_G(S)$ is connected.
\end{proposition}
\begin{proof}
Let $\sH$ denote the collection of all subgroups of $H \subset G$ so that $H$ does not act transitively on $\{1,\ldots, d\}$. Note that
$\sH$ is a finite set. Consider the set $\sD \subset \text{Cov}_G(S,\alpha)$ of disconnected covers with monodromy in $G$.
Recall that $\text{Cov}_G(S,\alpha)$ is a quotient of 
$\Hom\big(\pi_1(S,s_0),G\big)$.
Let $\tilde \sD \subset \Hom\big(\pi_1(S,s_0),G\big)$ be the lift of $\sD$. 
By definition of $\nu_G$, we have $\nu_G(\sD)=\mu_G(\tilde \sD)$.
Proposition \ref{prop:connectivity} tells us that
$$\tilde \sD=\bigcup_{H \in \sH} \Hom\big(\pi_1(S,s_0),H\big).$$
Let $\tilde \sD_H=\Hom\big(\pi_1(S,s_0),H\big)$. By subadditivity of measures, it suffices to prove that 
$\mu_G(\tilde \sD_H)=0$ for all $H \in \sH$.

Fix $H \in \sH$. Observe that $H$ is a proper subgroup of $G$, since $H$ does not act transitively while $G$ does.
Fix some $\epsilon>0$. We will show that $\mu_G(\tilde \sD_H)<\epsilon$. Since $H$ is a proper subset of $G$, we can find a $k$ so that $(\frac{|H|}{|G|})^k<\epsilon$. Observe that $\tilde \sD_H$ is contained in the union of cylinder sets
$$\bigcup_{(h_1, \ldots, h_k) \in H^k} 
\Cyl(1,\ldots,k; h_1,\ldots,h_k),$$
where we are using notation from equation \ref{eqn:cylinders}.
Observe that by monotonicity and by Definition \ref{def:random cover} of $\mu_G$, we have
$$\mu_G(\tilde \sD_H) \leq \sum_{(h_1, \ldots, h_k) \in H^k} \mu_G\big(\Cyl(1,\ldots,k; h_1,\ldots,h_k)\big)=
\frac{|H|^k}{|G|^k}<\epsilon.$$
This proves that $\mu_G(\tilde \sD_H)=0$, and thus $\nu_G(\sD)=0$ by the remarks in the previous paragraph.
\end{proof}

\section{Ergodicity}
\label{sec:erg}
Let $(S,\alpha)$ be a flat surface and let $G$ be a subgroup of the permutation group $\Pi_d$ for some integer $d \geq 2$. The group $\SL_\pm(2,\R)$ acts on the 
the space of affine deformations of covers with monodromy in $G$, $\tilde \sO_G(S,\alpha)$,
and the action of the diagonal subgroup, $g^t$, is the cover cocycle.
(See (\ref{eq:covers cocycle}).)

In this section, we prove Theorem \ref{thm:1}, which pertains to a
connected cover $(\tilde S, \tilde \alpha) \in \Cov_G(S,\alpha)$: If the Teichm\"uller trajectory (covers cocycle orbit) $g^t(\tilde S, \tilde \alpha)$ has an accumulation point in $\tilde \sO_G(S,\alpha)$ representing a connected surface, then the translation flow on $(\tilde S, \tilde \alpha)$ is defined for all time almost everywhere and is ergodic.

\compat{Please check this paragraph.}
We will see that Theorem \ref{thm:1} is a consequence of the following result,
which gives a criterion for ergodicity in terms of the geometries realized under the Teichm\"uller deformation. Before stating the Theorem we establish some notation. Let $\Sigma\subset \bar{S}$ be the subset of the metric completion of $S$ defined as the union of the zeros of $\alpha$ and the points in $\bar S \smallsetminus S$.
For $t \in \R$ we define $\mbox{dist}_t$ to be the metric on $S$ obtained by pulling back the flat metric on $g^t(S,\alpha)$ under the affine homeomorphism $(S,\alpha) \to g^t(S,\alpha)$ with derivative $g_t$.

\begin{theorem}[{\cite[Theorem 3]{rodrigo:erg}}]
\label{thm:integrability}
Let $(S,\alpha)$ be a flat surface of finite area. Suppose that for any $\eta>0$ there exist a function $t\mapsto \varepsilon(t)>0$, a one-parameter family of subsets
$$S_{t} = \bigsqcup_{i=1}^{C_t}S_t^i$$ 
of $S$ made up of $C_t < \infty$ path-connected components, each homeomorphic to a closed orientable surface with boundary, and functions $t\mapsto \mathcal{D}_t^i>0$, for $1\leq i \leq C_t$, such that for 
$$\Gamma_t^{i,j} = \{\mbox{paths connecting }\partial S_t^i \mbox{ to }\partial S_t^j\}$$
and
\begin{equation}
\label{eqn:systole}
\delta_t = \min_{i\neq j} \sup_{\gamma\in\Gamma_t^{i,j} }\mbox{dist}_t(\gamma,\Sigma)
\end{equation}
the following hold:
\begin{enumerate}
\item  $\mathrm{Area}(S\backslash S_{t}) < \eta\,\mathrm{Area}(S)$ for all $t>0$,
\item $\mbox{dist}_t(\partial S_{t},\Sigma) > \varepsilon(t)$ for all $t>0$, 
\item the diameter of each $S_t^i$, measured using $\dist_t$, is bounded above by $\mathcal{D}_t^i$ and
\begin{equation}
\label{eqn:integrability2}
\int_0^\infty \left( \varepsilon(t)^{-2}\sum_{i=1}^{C_t}\mathcal{D}_t^i + \frac{C_t-1}{\delta_t}\right)^{-2}\, dt = +\infty.
\end{equation}
\end{enumerate}
Then the translation flow is defined for all time almost everywhere and is ergodic.
\end{theorem}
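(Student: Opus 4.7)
The plan is to establish ergodicity by a Hopf-style variance argument tailored to the time-varying geometries $g^t(S,\alpha)$. Proceeding by contradiction, I would assume there exists a bounded $F^t$-invariant Borel function $\phi : S \to \R$ with $\int \phi\, d\mu = 0$ but $\|\phi\|_{L^2}>0$, and show that the geometric hypotheses force $\phi=0$ a.e. First I would fix $\eta>0$ and use the decomposition $S_{\varepsilon(t),t} = \bigsqcup_{i=1}^{C_t} S_t^i$ of the deformed surface $(S,\alpha_t):=g^t(S,\alpha)$ provided by the hypothesis. Condition (1) and the non-escape hypothesis on translation trajectories together reduce the problem: it is enough to control the $L^2$-oscillation of $\phi$ on $S_{\varepsilon(t),t}$, since the complementary region has $\mu$-measure at most $\eta$ and almost every orbit spends almost all its time in $S_{\varepsilon(t),t}$.

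Second, because $\phi$ is $F^t$-invariant, in the rescaled surface $(S,\alpha_t)$ it is constant along horizontal leaves, so I would bound the oscillation of $\phi$ \emph{across} leaves. For two points in the same piece $S_t^i$, a Fubini-type computation along vertical segments in flow boxes of horizontal size proportional to $\mathcal{D}_t^i$ and vertical size bounded below by the local injectivity radius $\varepsilon(t)$, together with invariance of $\phi$ under $F^t$, yields an intra-component $L^2$-oscillation bound of order $\varepsilon(t)^{-2}\mathcal{D}_t^i\,\|\phi\|_\infty$. The factor $\varepsilon(t)^{-2}$ comes from the Cauchy–Schwarz step used to move between horizontal leaves inside a rectangle of vertical width $\varepsilon(t)$.

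Third, to compare $\phi$ across distinct components $S_t^i$ and $S_t^j$, I would choose, for each pair realizing the minimum in \eqref{eqn:systole}, a path avoiding $\Sigma$ of length comparable to $\delta_t^{-1}$ and propagate the previous oscillation estimate along a spanning tree on the components. Summing over the $C_t-1$ edges of the tree produces the complementary term $(C_t-1)/\delta_t$. Altogether the total $L^2$-oscillation of $\phi$ on $S_{\varepsilon(t),t}$ is bounded by a constant multiple of
$$r(t) \;:=\; \varepsilon(t)^{-2}\sum_{i=1}^{C_t}\mathcal{D}_t^i \;+\; \frac{C_t-1}{\delta_t}.$$

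Finally I would apply the integrability hypothesis \eqref{eqn:integrability2}, namely $\int_0^\infty r(t)^{-2}\,dt = +\infty$, to conclude. Squaring the oscillation bound and integrating in $t$, a Borel–Cantelli / second-moment argument (analogous to the convergence of martingale increments with divergent total variance) forces the oscillation of $\phi$ on the cores $S_{\varepsilon(t),t}$ to vanish along a subsequence of times $t_k\to\infty$, so $\phi$ is essentially constant on $S \setminus N$ where $\mu(N)<\eta$. Letting $\eta\to 0$ yields $\phi$ constant a.e., contradicting $\|\phi\|_{L^2}>0$ and $\int\phi\, d\mu=0$. The main obstacle will be step three, where the geometric transport argument across the tree of components must be matched precisely with the exponent $-2$ in \eqref{eqn:integrability2}; verifying that the Fubini/Cauchy–Schwarz constants combine to the stated form of $r(t)$ is where the technical heart of the proof of Theorem~\ref{thm:integrability} lies, and it is this coordination between the horizontal invariance of $\phi$ and the vertical/transverse geometry encoded in $(\varepsilon(t),\mathcal{D}_t^i,\delta_t,C_t)$ that drives the divergent-integral criterion.
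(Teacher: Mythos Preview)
The paper does not prove Theorem~\ref{thm:integrability}; it is quoted from \cite{rodrigo:erg} and used as a black box in the proof of Theorem~\ref{thm:1}. There is therefore no proof in this paper to compare your proposal against.

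That said, your sketch is in the right spirit: the argument in \cite{rodrigo:erg} does proceed by controlling the $L^2$-oscillation of an invariant function via the time-$t$ geometry, and the quantity $r(t)$ you identify is exactly the one whose reciprocal squared must have divergent integral. Be aware, though, that your description of step three is not quite right. The term $(C_t-1)/\delta_t$ does not arise from ``a path of length comparable to $\delta_t^{-1}$''; $\delta_t$ is defined in \eqref{eqn:systole} as a \emph{distance to $\Sigma$}, not a path length, and it enters because one needs flow boxes of vertical width $\delta_t$ along the connecting paths in order to transport the oscillation estimate between components. Similarly, the Borel--Cantelli formulation in your final step is vague; the actual mechanism is a differential inequality for a variance-type quantity along the Teichm\"uller deformation (in the style of Forni's energy estimates), and the divergence of $\int r(t)^{-2}\,dt$ forces that quantity to zero. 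If you want to reconstruct the proof, look at how the variational argument in \cite{rodrigo:erg} adapts Forni's averaging operators to the infinite-type setting rather than trying to run a direct Hopf argument.
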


The theorem above is a geometric criterion for ergodicity. The spirit of the theorem is that if, as ones deforms a flat surface $(S,\alpha)$ using the Teichm\"{u}ller deformation $g^t$, the geometry of the surface does not deteriorate too quickly (as measured by the diameter of big components, among other things), the translation flow is ergodic. 

In \cite{rodrigo:erg}, this theorem was proved with the additional hypothesis that the set of points whose trajectories leave every compact subset of $S$ has zero measure. This is equivalent to the statement that the translation flow is defined for all time almost everywhere (which we state as a conclusion above). To get the version above we need to prove that the hypothesis is unnecessary:

\begin{proof}[Proof that the translation flow is defined for all time almost everywhere]
We will show that the geometric conditions listed above force the translation flow to be defined for all time almost everywhere.
Suppose $(S,\alpha)$ is a finite area translation surface and satisfies the list of geometric conditions given in the Theorem. Assume to the contrary that the translation flow $F^s$ is not defined for all time almost everywhere. Then there is a time $s_0$ and a measurable subset $X \subset S$ of Lebesgue measure $m>0$ so that $F^{s_0}(x)$ is undefined for all $x \in X$. There is a geometric consequence to lying in $X$: For any $x \in X$ there is a $s_x\in \R$ with $0<s_x\leq s_0$ so that $F^s(x)$ is defined for $s \in [0,s_x)$ and 
$$\lim_{s \to {s_x}^-} F^s(x) \in \Sigma.$$
In particular, the distance from $x$ to $\Sigma$ measured with $\dist_t$ is no larger than $s_x e^{-t} \leq s_0 e^{-t}$. 

Select an $\eta$ so that $m>\eta\,\mathrm{Area}(S)$. Then any subsurface $S_t$ satisfying condition (1) of the theorem must satisfy $S_t \cap X \neq \nullset$. As a consequence of condition (2) and remarks above, we must have $\varepsilon(t)<s_0 e^{-t}$.

We will draw a contradiction to the integral \eqref{eqn:integrability2} being infinite. To do this it suffices to get some control of the sum of the diameters. We partition the set $\{t \in \R:~t \geq 0\}$ into two pieces ${\mathcal S}$ and ${\mathcal L}$ (for ``small'' and ``large''). We declare $t$ to lie in ${\mathcal S}$ if all components $S_t^i$ have diameter less than or equal to $\varepsilon(t)$,
and declare $t$ to lie in ${\mathcal L}$ otherwise. We will control the sum $\sum_i {\mathcal D}_t^i$ by separate arguments on the two sets.

Suppose $t \in {\mathcal S}$. Then all components of $S_t$ have diameter less than $\varepsilon(t)$. Let $x_i \in S_t^i$ be a point in one of the components. Since $\dist_t(x_i,\Sigma) > \varepsilon(t)$ by hypothesis, we can embed a Euclidean $B_i$ ball of radius 
${\mathcal D}_t^i<\varepsilon(t)$ about $x_i$ within $S$ using the distance $\dist_t$. 
Then we have $S_t^i \subset B_i$, so 
$$\mathrm{Area}(S_t^i) \leq \mathrm{Area}(B_i) = \pi ({\mathcal D}_t^i)^2.$$
Observe this holds for all $i$ and so it follows that the sum $\sum_i {\mathcal D}_t^i$ is bounded from below by a constant:
\begin{equation}
\label{eq:diameter bound 1}
\Big(\sum_i {\mathcal D}_t^i\Big)^2 \geq \sum_i ({\mathcal D}_t^i)^2 \geq \frac{4}{\pi} \sum_i \mathrm{Area}(S_{t}^i)=
\frac{1}{\pi} \mathrm{Area}(S_{t}) >\frac{1}{\pi} (1-\eta) \mathrm{Area}(S).
\end{equation}

Now suppose $t \in {\mathcal L}$. Then $S_t$ has a component of  diameter at least $\varepsilon(t)$. 
In this case we can use the very na\"ive bound
\begin{equation}
\label{eq:diameter bound 2}
\sum_i {\mathcal D}_t^i \geq \varepsilon(t).
\end{equation}

Combining \eqref{eq:diameter bound 1} and \eqref{eq:diameter bound 2}, we see that $\sum_i {\mathcal D}_t^i \geq D(t)$
where $D(t)$ is defined by
$$D(t)=\min~\left\{\varepsilon(t), \sqrt{\frac{1}{\pi} (1-\eta) \mathrm{Area}(S)}\right\}.$$
The quantity on the right is a constant while $\varepsilon(t)$ tends to zero since $\varepsilon(t)<s_0 e^{-t}$.
Thus $D(t)=\varepsilon(t)$ for $t>t_\ast$ for some $t_\ast \geq 0$. For $t>t_\ast$, $\sum_i {\mathcal D}_t^i \geq \varepsilon(t)$
and so the quantity being integrated in \eqref{eqn:integrability2} satisfies 
$$\left( \varepsilon(t)^{-2}\sum_{i=1}^{C_t}\mathcal{D}_t^i + \frac{C_t-1}{\delta_t}\right)^{-2} \leq \left(\varepsilon(t)^{-2} \varepsilon(t)\right)^{-2}=\varepsilon(t)^2.$$
Recalling $\varepsilon(t)<s_0 e^{-t}$, we see the total integral \eqref{eqn:integrability2} is bounded by
$$\int_0^{t_\ast} \left( \varepsilon(t)^{-2}\sum_{i=1}^{C_t}\mathcal{D}_t^i + \frac{C_t-1}{\delta_t}\right)^{-2}\,dt+
s_0^2 \int_{t_\ast}^\infty e^{-2t}\,dt,$$
which is finite. This is our contradiction.
\end{proof}

A key consequence for us is the following which was mentioned in the introduction:

\begin{corollary}[{\cite[Theorem 2]{rodrigo:erg}}]
\label{cor:rodrigo ergodicity from Veech group}
Suppose $(S,\alpha)$ is a finite area translation surface of infinite topological type. Let $a \in \SL_\pm(2,\R)$. Then, if the trajectory $g^t a (S,\alpha)$ is non-divergent in $\sO(S,\alpha)$, then the translation flow on $a (S,\alpha)$ is defined for all time almost everywhere and is ergodic.
\end{corollary}

It is worth noting that in \cite{rodrigo:erg} this was proved independently from Theorem \ref{thm:integrability}.
We will prove this here using Theorem \ref{thm:integrability} because we will use some of the same ideas in the proof of Theorem \ref{thm:1}.

\begin{proof}
By replacing $(S,\alpha)$ with $a(S,\alpha)$, we can assume that $a$ is the identity.
We will use square brackets to denote the translation equivalence class of a translation surface in $\sO(S,\alpha)$.
Non-divergence guarantees that there is sequence of times $t_k$ tending to $+\infty$ so that the translation equivalence class 
$[g^{t_k} (S,\alpha)]$ converges in $\sO(S,\alpha)$ to some limit $[b(S, \alpha)]$. 
Since $\N \cup \{+\infty\}$ with its usual topology is compact and the map $\SL(2,\R) \to \sO(S,\alpha)$ given by $a \mapsto [a(S,\alpha)]$
is continuous, $K=\{[g^{t_k} (S,\alpha)]~:~k \in \N\} \cup \{[b(S, \alpha)]\}$ is compact in $\sO(S,\alpha)$.
Fix any $\eta>0$. Select a compact connected subsurface with boundary $L \subset S$ whose area is greater than $1- \eta$ times the area of $S$, and so that $L$ does not include any zeros of $\alpha$. Fix any $\epsilon>0$. For $t \in [t_k-\epsilon, t_k +\epsilon]$ select the subsurface $S_t=g^{-t_k} b_k(L)$ (or any such subsurface if $t$ belongs to multiple such intervals). 
Then since $g^{t} (S,\alpha)$ is translation equivalent to $g^{t-t_k} b_k(S, \alpha)$, there is an isometry from $S$ with the metric $\dist_t$
to the translation surface $g^{t-t_k} b_k(S,\alpha)$ which carries $S_t$ to the image of $L$ under $g^{t-t_k} b_k$.
So for all $t$ in any $[t_k-\epsilon, t_k +\epsilon]$, the subsurface we have selected is isometric to the image
of $m(L)$ under an $m \in g^{[-\epsilon,\epsilon]} K$ viewed as a subsurface of $m(S,\alpha)$.
Observe that the quantities in Theorem \ref{thm:integrability} vary continuously in $m$ as we deform the metric in this way.
Thus, the quantity being integrated over times $t\in \bigcup_k [t_k-\epsilon, t_k +\epsilon]$ is bounded uniformly from below by a uniform positive constant. Since $t_k$ is an infinite sequence tending to $+\infty$, the integral is infinite.
\end{proof}

\begin{proof}[Proof of Theorem \ref{thm:1}]
\compat{This proof was completely rewritten. Please read.}
Let $(S,\alpha)$ be a finite area translation surface with infinite topological type,
and let $(\tilde S, \tilde \alpha)$ be a cover with monodromy in $G \subset \Pi_d$.
We assume that $[g^t(\tilde S, \tilde \alpha)]$ has an $\omega$-limit point in $\tilde \sO_G(S,\alpha)$ which is the translation equivalence class of a connected surface. Let $t_k$ be a sequence of times tending to $+\infty$ for which 
$[g^{t_k}(\tilde S, \tilde \alpha)]$ approaches this $\omega$-limit. 
Then by definition of the topology on $\tilde \sO_G(S,\alpha)$, there is a sequence
$$\big(b_k, [(\tilde S_k, \tilde \alpha_k)]\big) \in \SL(2,\R) \times \Cov_G(S,\alpha)$$ 
so that for all $k$ the surface
$b_k (\tilde S_k, \tilde \alpha_k)$ is translation equivalent to $g^{t_k}(\tilde S, \tilde \alpha)$,
the sequence $b_k$ converges to some $b \in \SL(2,\R)$ and the sequence $[(\tilde S_k, \tilde \alpha_k)]$ converges to some connected $[(\tilde S_\infty, \tilde \alpha_\infty)] \in \Cov_G(S,\alpha)$. 

We will briefly discuss the convergence of $[(\tilde S_k, \tilde \alpha_k)]$
to the connected cover $[(\tilde S_\infty, \tilde \alpha_\infty)]$ within $\Cov_G(S,\alpha)$. Fix a basepoint $s_0 \in S$ which is not a zero. Choose a representative cover $(\tilde S_\infty, \tilde \alpha_\infty)$ from $[(\tilde S_\infty, \tilde \alpha_\infty)]$.
Since $[(\tilde S_\infty, \tilde \alpha_\infty)] \in \Cov_G(S,\alpha)$, we can 
denote the lifts of $s_0$ by $s_\infty^1,\ldots, s_\infty^d \in \tilde S_\infty$ 
obtaining a monodromy homomorphism 
$h_\infty:\pi_1(S,s_0) \to G \subset \Pi_d$. From the definition of the topology $\Cov_G(S,\alpha)$,
for all $k$ we can select covers $(\tilde S_k, \tilde \alpha_k)$ from the equivalence classes $[(\tilde S_k, \tilde \alpha_k)]$ and 
denote the lifts of the basepoint $s_0$ by $s_k^1,\ldots, s_k^d \in \tilde S_k$ in such a way so that
the corresponding monodromy homomorphisms $h_k:\pi_1(S,s_0) \to G$ converge
to $h_\infty$ within $\Hom\big(\pi_1(S,s_0), G\big)$. 

We will be using Theorem \ref{thm:integrability}.
Fix an $\epsilon>0$. As in the prior proof we will only bother to choose a subsurface when $t \in [t_k-\epsilon,t_k+\epsilon]$ for some $k$. 
We will now describe how we choose these subsurfaces. As in the prior proof, we can let $L \subset S$ be a compact connected subsurface with boundary whose area is more than $1-\eta$ times the area of $S$ so that $L$ contains no zeros of $\alpha$. A compact surface with boundary 
has finite genus, so by removing small open neighborhoods of a maximal collection of smooth disjoint arcs joining $\partial L$ to itself whose collective complement in $L$ is connected, we may assume that $L$ has all these properties and is a homeomorphic to a closed topological disk. We can also assume by possibly adding a bit to the subsurface that $s_0 \in L$. From above
$(\tilde S, \tilde \alpha)$ is translation equivalent to $g^{-t_k} b_k(\tilde S_k, \tilde \alpha_k)$ and so these surfaces cover $g^{-t_k} b_k(S, \alpha)$ by Proposition \ref{prop:action on covers}.
Let $L_k \subset (\tilde S_k, \tilde \alpha_k)$ be the collection of all lifts of $L$ under the covering map to $S$. 
The components of $L_k$ are naturally labeled $L_k^1, \ldots, L_k^d$ so that the lift of the basepoint $s^i_k$ lies in $L_k^i$ for all $i \in \{1,\ldots, d\}$.
For $t \in [t_k-\epsilon,t_k+\epsilon]$ and $i \in \{1,\ldots, d\}$
we define $S_t^i \subset \tilde S$ to be image of $g^{-t_k} b_k(L_k^i)$ under a translation isomorphism 
$g^{-t_k} b_k (\tilde S_k, \tilde \alpha_k) \to (\tilde S, \tilde \alpha)$. We define $S_t=\bigcup_{i=1}^d S_t^i$.
Observe that with this definition:
\begin{itemize}
\item The surface $S_t$ has a number of components $C_t$ equal to the degree $d$ of the covering maps.
\end{itemize}
Consider geometric quantities of $S_t \subset \tilde S$ measured with $\dist_t$. The surface $\tilde S$ with this metric is isometric
to $g^t(\tilde S, \tilde \alpha)$ which is translation equivalent to $g^{t-t_k} b_k (\tilde S_k, \tilde \alpha_k)$.
So by definition of $S_t$ these geometric quantities are the same as for the subsurface $g^{t-t_k} b_k(L_k)$
of $g^{t-t_k} b_k (\tilde S_k, \tilde \alpha_k)$. Observe that $g^{t-t_k} b_k$ lies in the compact set $g^{[-\epsilon,\epsilon]} K$ where $K=\{b_k:~k \in \N\} \cup \{b\}$ is compact as in the prior proof. Observe:
\begin{itemize}
\item The quantities $\epsilon(t)$ and $\mathcal{D}_t^i$ used to measure the components of $g^{t-t_k} b_k(L_k)$ as a subsurface
of $g^{t-t_k} b_k (\tilde S_k, \tilde \alpha_k)$ are exactly the same as the quantities for $g^{t-t_k} b_k(L)$ viewed as a subsurface of 
$g^{t-t_k} b_k (S, \tilde \alpha)$, because these quantities are covering map invariant.
\end{itemize}
In particular, this means that $\varepsilon(t)$ can be bounded uniformly away from zero when $t \in \bigcup_k [t_k-\epsilon,t_k+\epsilon]$
and $\mathcal{D}_t^i$ can be bounded uniformly away from $+\infty$ as in the proof of Corollary \ref{cor:rodrigo ergodicity from Veech group}.

It remains to control the quantity $\delta_t$. We must first construct the curves $\Gamma^{i,j}_t$. We utilize the convergence
of $h_k$ to $h_\infty$. 
Since $S_\infty$ is connected, we can select for all distinct $i,j \in \{1, \ldots, d\}$ a path $\Gamma_\infty^{i,j}$ in $\tilde S_\infty$ disjoint from the zeros joining $s_\infty^i$ to $s_\infty^j$. Let $\tilde \delta_0>0$ be the minimum over all pair $(i,j)$ of
the distance from $\Gamma_\infty^{i,j}$ to the set of points of the completion of $\tilde S$ which are either zeros or added in the completion. Let $\gamma^{i,j}$ be the loop in $S$ based at $s_0$ which is obtained as the image of $\Gamma_\infty^{i,j}$ under the covering map $\tilde S_\infty \to S$. Since $h_k$ tends to $h_\infty$, for $k$ sufficiently large $h_k(\gamma^{i,j})=h_\infty(\gamma^{i,j})$ for all $i$ and $j$. We will assume by dropping finitely many $k$ that this holds for all $k$. Then for each $k$, the lift $\tilde \gamma^{i,j}_k$ of $\gamma^{i,j}$ to $\tilde S_k$ which begins at $s_k^i$ ends at $s_k^j$. Observe that the minimal distance over all pairs $(i,j)$ of
the distance of $\gamma^{i,j}$ to zeros or points added in the completion is still $\tilde \delta_0$.
A subpath $\tilde p^{i,j}_k \subset \tilde \gamma^{i,j}_k$ joins $\partial L^i_k$ to $\partial L^j_k$ since the path joins $s_k^i \in L^i_k$ to $s_k^j \in L^j_k$.
For $t \in [t_k-\epsilon,t_k+\epsilon]$, we define $\Gamma_t^{i,j} \subset S$ to be the image of $g^{-t_k} b_k(\tilde p^{i,j}_k)$
under the translation isomorphism $g^{-t_k} b_k (\tilde S_k, \tilde \alpha_k) \to (\tilde S, \tilde \alpha)$ (as used above to define $S_t$). The quantity $\delta_t$ is the minimal distance of $\Gamma_t^{i,j}$ to $\Sigma \subset \bar S$ taken over pairs $(i,j)$
and measured with $\dist_t$.
As above $S$ with metric $\dist_t$ is isometric to $g^t(\tilde S, \tilde \alpha)$ which in turn is translation isomorphic to $g^{t-t_k} b_k (\tilde S_k, \tilde \alpha_k)$. Thus we get the same value of $\delta_t$ by looking at the path $g^{t-t_k} b_k(\tilde p^{i,j}_k)$ in the translation surface $g^{t-t_k} b_k (\tilde S_k, \tilde \alpha_k)$. Thus $\delta_t$ is bounded from below by
$\frac{1}{z} \tilde \delta_0$ when $t \in \bigcup_k [t_k-\epsilon,t_k+\epsilon]$ where $z$ is the maximal operator norm of $m^{-1}$ taken over $m$ taken from the compact set $g^{[-\epsilon,\epsilon]} K$ as above.

We have shown that the quantity being integrated in \eqref{eqn:integrability2} has a positive lower bound when $t \in \bigcup_k [t_k-\epsilon,t_k+\epsilon]$. Since $t_k \to +\infty$, this means that the integral is infinite, so Theorem \ref{thm:integrability}
guarantees that the translation flow is defined for all time almost everywhere and is ergodic.
\end{proof}

\section{Further proofs}
\label{sect:proofs}
In this section, we prove Proposition \ref{prop:lifting measures}, Proposition \ref{prop:accumulation points} and Theorem \ref{thm:2} from the introduction.

\begin{proof}[Proof of Proposition \ref{prop:lifting measures} (Ergodicity and unique lifts of measures)]
Let $(S,\alpha)$ be a translation surface, let $(\tilde S, \tilde \alpha)$ be a degree $d$ cover, and
let $p:\tilde S \to S$ denote the covering map. We assume that the translation flow is ergodic on both of these surfaces.
We will prove that Lebesgue measure is the unique translation flow invariant measure which projects to Lebesgue measure on $(S,\alpha)$ under the covering map. 

Fix a non-singular basepoint $s \in S$,
and let $h:\pi_1(S,s) \to \Pi_d$ be the monodromy representation.
We will consider the regular (or normal) cover of $S$ associated to the subgroup $\ker~h \subset \pi_1(S,s)$.
Let $(\hat S, \hat \alpha)$ denote this cover. Because the subgroup $\ker~h$ is normal, there is a covering group action
of $\Delta=\pi_1(S,s)/\ker~h$ on $\hat S$, and the quotient $\hat S/\Delta$ is naturally identified with $S$.
The covering $\hat S \to S$ factors through $\tilde S$. That is, there is a subgroup $\Gamma \subset \Delta$ so that
the $\tilde S$ is isomorphic as a cover to $\hat S/\Gamma$. We let $\hat p:\hat S \to \tilde S$ denote the covering
obtained by identifying $\tilde S$ with $\hat S/\Gamma$.

Now suppose that $\tilde \mu$ is a measure on $\tilde S$ which is invariant for the translation flow,
and satisfies $p_\ast(\tilde \mu)=\lambda$, where $\lambda$ denotes Lebesgue measure on $(S,\alpha)$.
Note that the measure $\tilde \mu$ lifts to a unique measure $\hat \mu$ on $(\hat S, \hat \alpha)$ so that $\hat p_\ast(\hat \mu)=\tilde \mu$
and so that
$\gamma_\ast(\hat \mu)=\hat \mu$ for all $\gamma \in \Gamma$. 
The key point in the proof is that because $\hat \mu$ projects through to Lebesgue measure on $S$, we know that if we average
the push forwards of $\hat \mu$ under the covering group $\Delta$, we get Lebesgue measure on $(\hat S, \hat \alpha)$, which we denote by $\hat \lambda$. That is,
$$\frac{1}{|\Delta|} \sum_{\delta \in \Delta} \delta_\ast(\hat \mu)=\hat \lambda.$$
Now consider the push-forward of these measures under the covering map $\hat p$. Since $p_\ast(\hat \mu)=\tilde \mu$, we see that
$$\frac{1}{|\Delta|} \left(\tilde \mu+\sum_{\delta \in \Delta \smallsetminus \{e\}} \hat p_\ast \circ \delta_\ast(\hat \mu)\right)=\tilde \lambda,$$
where $\tilde \lambda$ is the Lebesgue measure on $(\tilde S, \tilde \alpha)$. Finally, $\tilde \lambda$ is ergodic,
so each of the probability measures in the above convex combination must equal $\tilde \lambda$. In particular,
$\tilde \mu=\tilde \lambda$, which concludes the proof.
\end{proof}

\begin{proof}[Proof of Proposition \ref{prop:accumulation points}]
We assume that $(S,\alpha)$ is a finite area translation surface with infinite topological type
and that it has Teichm\"uller trajectory which is non-divergent in $\sO(S,\alpha)$. 
By non-divergence, there is a subsequence of times $t_n \to \infty$
so that $g^{t_n}(S,\alpha)$ tends to some $A_\infty (S,\alpha) \in \sO(S,\alpha)$,
where $A_\infty \in \SL_\pm(2,\R)$.
Because the topology on $\sO(S,\alpha)$ arises as a quotient of the topology on $\SL_\pm(2,\R)$,
we see that there is a sequence $A_n \in \SL(2,\R)$ tending to $A_\infty$ so that $g^{t_n}(S,\alpha)$ is translation equivalent to $A_n (S, \alpha)$. It then follows that there is a sequence
of elements $R_n$ of the Veech group of $(S,\alpha)$ so that
$g^{t_n}=A_n R_n.$

Now consider a cover $(\tilde S, \tilde \alpha)$
with monodromy in $G$. 
Proposition \ref{prop:action on covers} explains that the Veech group acts on the space of covers with monodromy in $G$. In particular, for each $n$, there is a cover $(\tilde S_n, \tilde \alpha_n) \in \Cov_G(S, \alpha)$ which is translation equivalent to $R_n (\tilde S, \tilde \alpha)$.
Then, in the space $\tilde \sO_G(S,\alpha)$, we have that 
$$g^{t_n}(\tilde S, \tilde \alpha)=A_n R_n (\tilde S, \tilde \alpha)=A_n (\tilde S_n, \tilde \alpha_n).$$
The key observation is that $\Cov_G(S, \alpha)$ is a quotient of a Cantor set and thus sequentially compact,
so there must be a limit point $(\tilde S_\infty, \tilde \alpha_\infty)$ for the sequence  $(\tilde S_n, \tilde \alpha_n) \in \Cov_G(S, \alpha)$. Since $A_n$ tends to $A_\infty$ in $\SL_\pm(2,\R)$,
we see that $g^{t_n}(\tilde S, \tilde \alpha)$ tends to $A_\infty (\tilde S_\infty, \tilde \alpha_\infty)$, which 
is our desired accumulation point.
\end{proof}

\begin{proof}[Proof of Theorem \ref{thm:2} (Random covers accumulate on connected covers)]
Let $(S,\alpha)$ be a finite area translation surface with infinite topological type
and a Teichm\"uller trajectory which is non-divergent in $\sO(S,\alpha)$.
As in the previous proof, this guarantees that there is a sequence $t_n \to \infty$ so that
$g^{t_n}=A_n R_n$ where $\{A_n \in \SL_\pm(2,\R)\}$ is a sequence tending to $A_\infty \in \SL_\pm(2,\R)$, and $R_n \in V(S,\alpha)$.

We will work with the space $\Covt_G(S)$ of topological covers of $(S,\alpha)$ which comes equipped with a measure $\nu_G$. In order to do this, observe that for each Veech group element, $R_n \in V(S,\alpha)$ we can find an affine homeomorphism $\phi_n:(S,\alpha) \to (S,\alpha)$ so that $D(\phi_n)=R_n$. 
Recall that we denote elements of $\Covt_G(S)$ by pairs $(p, \tilde S)$ where
$p:\tilde S \to S$ is a covering map.
With an additional choice of a curve for each $n$, we obtain from $\phi_n$ an action 
$$\Phi_n:\Covt_G(S) \to \Covt_G(S)$$
as in equation \ref{eq:homeomorphism acting on covers}.
Let ${\mathcal T}:\Covt_G(S) \to \Cov_G(S,\alpha)$ be the map defined in \eqref{eq:top cover to cover}. Then for each $(p, \tilde S) \in \Covt_G(S)$ we have that 
$$R_n(\tilde S,\tilde \alpha)=(\tilde S_n, \tilde \alpha_n) \quad \text{where
$(\tilde S,\tilde \alpha)={\mathcal T}(p,\tilde S)$ and 
$(\tilde S_n,\tilde \alpha_n)={\mathcal T}\circ \Phi_n(p,\tilde S)$.}$$
This follows from Proposition \ref{prop:action on covers}
and equation \ref{eq:homeomorphism acting on covers}.
In particular, with these hypotheses, we have that $g^{t_n}(\tilde S, \tilde \alpha)$ is translation equivalent to $A_n (\tilde S_n, \tilde \alpha_n)$. Since the sequence $\{A_n\}$ converges in $\SL_\pm(2,\R)$, it suffices to find a connected
accumulation point of the sequence $\{\Phi_n(p, \tilde S)\}$ for $\nu_G$-almost every $(p, \tilde S) \in \Covt_G(S)$.
(This is because our measure $m_G$ on the space of covers up to translation equivalence, $\Cov_G(S,\alpha)$, is the push forward of $\nu_G$ under the projection $\Covt_G(S) \to \Cov_G(S,\alpha)$.)

Let $\sD \subset \Covt_G(S)$ denote the collection of disconnected surfaces.
Note that these surfaces have $\nu_G$-measure zero by Proposition \ref{prop:measure zero}.
Let $\sE \subset \Covt_G(S)$ denote the collection of covers $(p, \tilde S) \in \Covt_G(S)$
so that every accumulation point of $\{\Phi_n(p, \tilde S)\}$ is disconnected. 
Then if $\sU \subset \Covt_G(S)$ is open and contains $\sD$, we have that
for every $(p, \tilde S) \in \sE$, 
there must be an $N$ so that $\Phi_n(p, \tilde S) \in \sU$ for all $n > N$. 
Indeed, if this were not true, then infinitely many $\Phi_n(p, \tilde S)$ lie in
the complement of $\sU$, which is sequentially compact since $\Cov_G(S,\alpha)$ is a Cantor set by Proposition \ref{prop:topological covers}. In other words, we have
$$\sE \subset \bigcup_N \bigcap_{n>N} \Phi_n^{-1}(\sU)= \liminf_{n \to \infty} \Phi_n^{-1}(\sU).$$

Now fix some $\epsilon>0$. We will show that $\nu_G(\sE)<\epsilon$.
Because $\nu_G$ is a Borel probability measure on a Cantor set,
$\nu_G$ is regular. Thus because $\nu_G(\sD)=0$, we can find an open set $\sU$ containing $\sD$
so that $\nu_G(\sU)<\epsilon$. Then from the above, we have
$$\nu_G(\sE) \leq \nu_G \left( \liminf_{n \to \infty} \Phi_n^{-1}(\sU) \right) \leq \liminf_{n \to \infty} \nu_G \circ \Phi_n^{-1}(\sU).$$
But, Corollary \ref{cor:action on topological covers} tells us that
$\nu_G$ is $\Phi_n$-invariant. Thus, the above sequence of inequalities
tells us that $\nu_G(\sE) \leq \nu_G(\sU) < \epsilon$. Since $\epsilon$
was arbitrary, we conclude that $\nu_G(\sE)=0$ as desired.
\end{proof}

\section{Examples of devious covers}
\label{sect:evil covers}

\subsection{Chamanara's surfaces}
\label{sect:Chamanara}
\compat{The surfaces studied here should all be named $(S_n,\alpha_n)$ with the primary case being $n=2$. Only a generic surface should be $(S,\alpha)$.}
We introduce a surface $(S_2,\alpha_2)$ first studied by Chamanara in \cite{Chamanara04}. (See also the related work \cite{CGL}.)
The surface is built from a closed $1 \times 1$ square with each of the edges subdivided into intervals of length $\frac{1}{2^k}$ for $k \in \N$ as indicated in left side of Figure \ref{fig:Chamanara}. The vertical intervals of equal length are then glued together by translation, and we do the same to the horizontal intervals. The intervals being identified have been labeled by the same integers in the figure. The endpoints of these intervals being glued 
and the corners of the square are discarded to give the space a translation structure. 

\begin{figure}
\begin{center}
\includegraphics[width=3in]{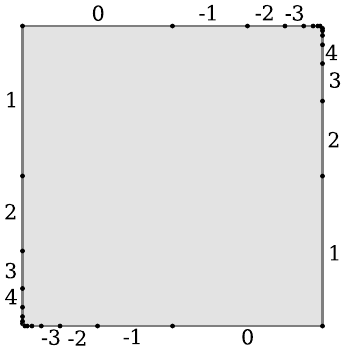}
\includegraphics[width=3in]{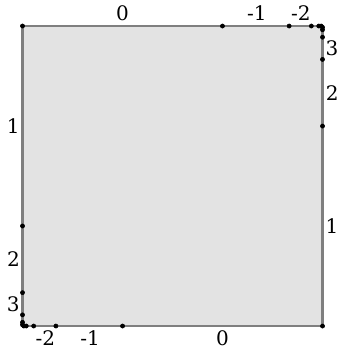}
\end{center}
\caption{Chamanara's surfaces $(S_2,\alpha_2)$ and $(S_3, \alpha_3)$.}
\label{fig:Chamanara}
\end{figure}

The surface $(S_2,\alpha_2)$ has an affine automorphism $\phi_2$ whose derivative is
$$D(\phi_2)=\left[\begin{array}{rr} \frac{1}{2} & 0 \\ 0 & 2 \end{array}\right] \in V(S_2,\alpha_2).$$
To see this observe that the image of the square under this linear map is a $\frac{1}{2} \times 2$ rectangle. If we push this rectangle into the surface allowing the rectangle only to pass through the edge labeled zero in the figure,
we see the identifications are respected and thus this map determines
an affine automorphism $\phi_2$ of the surface.

The vertical and horizontal flows on Chamanara's surface $(S_2,\alpha_2)$ can be seen as the suspension flow over the dyadic odometer
as defined in \eqref{eq:odometer}, except that our construction introduces countably many singularities through which the flows are not defined. To see this, it suffices to consider the dyadic odometer as an infinite interval exchange map on $[0,1]$ defined for $x\in[0,1]$, as the dyadic odometer on the dyadic expansion $(x_1,x_2,\dots) \in X_2=\{0,1\}^\N$ of $x = \sum x_i 2^{-i}$. As such, the map induced on a transversal $T_2$ running from the bottom to the top of the square making up $(S_2,\alpha_2)$ such as the one depicted in Figure \ref{fig:skew}
is isomorphic to the dyadic odometer. See \cite[\S 2]{LT:models} for a thorough description of the dyadic odometer and its relationship to Chamanara's surface.

More generally, for integers $n \geq 2$, one can construct a homeomorphic  translation surface $(S_n,\alpha_n)$ by letting the identified sides in Figure \ref{fig:Chamanara} be of length $\frac{n-1}{n^k}$ for $k\in\mathbb{N}$. The right side of Figure \ref{fig:Chamanara}
illustrates the case of $n=3$. The surface $(S_n, \alpha_n)$ admits
an affine automorphisms $\phi_n$ with diagonal derivative and eigenvalues of $n$ and $\frac{1}{n}$. As in the case of the dyadic odometer, this surface admits a section which is the $n$-adic odometer.

\begin{remark}[Veech groups]
The Veech group $V(S_n,\alpha_n)$ is known to be generated by two parabolics. See \cite{HR:chamanara}.
\end{remark}

\compat{Moved this from later in the section.}
We will now introduce some more notation which will be useful for the Proof of Theorem \ref{thm:skew} and for work later in this section.
Let $G$ be a subgroup of the symmetric group $\Pi_d$ with $d \geq 2$ which acts simply transitively on $\{1,\ldots, d\}$. 
For a general surface $(S,\alpha)$ the space $\Cov_G(S,\alpha)$ of covers with monodromy in $G$ is a quotient of the space of topological covers $\Covt_G(S)$ by the translation equivalence relation; see \eqref{eq:top cover to cover}. 
However it is not hard to see that by Remark \ref{rem:primitivity} that this equivalence relation is trivial in the case of $(S_n, \alpha_n)$. (The surface $(S_n,\alpha_n)$ is formed by identifying the boundary edges of the square along an IET. The deck group of the universal cover of $(S_n,\alpha_n)$ acts transitively on lifts of these squares, and the collection of these lifts must be preserved by translation automorphisms because of the singularities in their boundary.) \compat{Detail added above to address referee's comment.}
Thus by recalling \eqref{eq:space of covers} we see
that the space of covers of $(S_n,\alpha_n)$ with monodromy in $G$ can be thought of as
\begin{equation}
\label{eq:cov chamanara}
\Cov_G(S_n,\alpha_n)=\Pi_d \bs \Hom\big(\pi_1(S_n,s_0),G\big),
\end{equation}
where $s_0$ is a basepoint of $S_n$.
If $h$ is a homomorphism from $\pi_1(S_n,s_0)$ to $G$, we use $[h]$ to denote its equivalence class in $\Cov_G(S_n,\alpha_n)$.

Recall that formally, the affine automorphism $\phi^{-1}_n$ does not act on the fundamental group of $S_n$. 
We choose a basepoint $s_0$ in the interior of the square near the southwest corner of the square in Figure \ref{fig:Chamanara}.
To get an action on the fundamental group, we need to select
a curve joining $s_0$ to $\phi_n^{-1}(s_0)$ as described by equation \ref{eq:curve needed}. Because we chose $s_0$ in the interior of the square near the southwest corner, its image $\phi_n^{-1}(s_0)$ will also lie near the southwest corner and in the interior of the square. We specify $\beta$ to be a curve joining $\phi^{-1}(s_0)$ to $s_0$ while not leaving the interior of the square. Then as in \eqref{eq:curve needed} we define the group automorphism
\begin{equation}
\label{eq:phi beta}
\phi^{-1}_\beta:\pi_1(S_n,s_0) \to \pi_1(S_n,s_0); \quad [\gamma] \mapsto [\beta^{-1} \bullet (\phi_n^{-1} \circ \gamma) \bullet \beta].
\end{equation}

We now see that Theorem \ref{thm:skew} is a consequence of Corollary \ref{cor:2}.

\begin{proof}[Proof of Theorem \ref{thm:skew}]
Recall that the $n$-adic odometer is uniquely ergodic, because it can be understood as a minimal rotation of a compact abelian group.
Therefore, the translation flow on the surface $(S_n, \alpha_n)$ is also uniquely ergodic. 
Observe that the Teichm\"uller flow is periodic, because the affine automorphism $\phi_n$ has diagonal derivative. 

We begin with some definitions which are illustrated in Figure \ref{fig:skew}. Since this figure illustrates $n=2$, we will just discuss this case for this paragraph. The dotted line $T_2$ on $S_2$, isometric to $[0,1]$, is a transversal to the translation flow. We selected this transversal to be vertical and to pass through the basepoint $s_0$ of $S_2$. The return map to the transversal is isomorphic to the dyadic odometer. 
We create an infinite set of closed loops $\gamma_i$ on $S_2$ indexed by $\mathbb{N}$. For each $i\in\mathbb{N}$, the loop $\gamma_i$ 
moves from the basepoint to a point $x\in T_2$ within $T_2$, follows the translation flow until it first returns to $T_2$,
and then travels back to the basepoint within $T_2$ (shown in Figure \ref{fig:skew} as three dashed line segments). 
In defining $\gamma_i$, we insist that the dyadic expansion $(x_1,x_2,x_3,\dots)$ of $x$ has the property that $x_j = 1$ for all $j<i$ and $x_i = 0$. Doing this for every $i\in \mathbb{N}$ we create the countable set $\{\gamma_i\}$. Let $\Gamma^+ = \langle\gamma_1,\gamma_2,\dots\rangle$ be the free group generated by the $\gamma_i$ which is a subgroup of $\Gamma = \pi_1(S_2,s_0)$.

\begin{figure}
  \includegraphics[width = 6in]{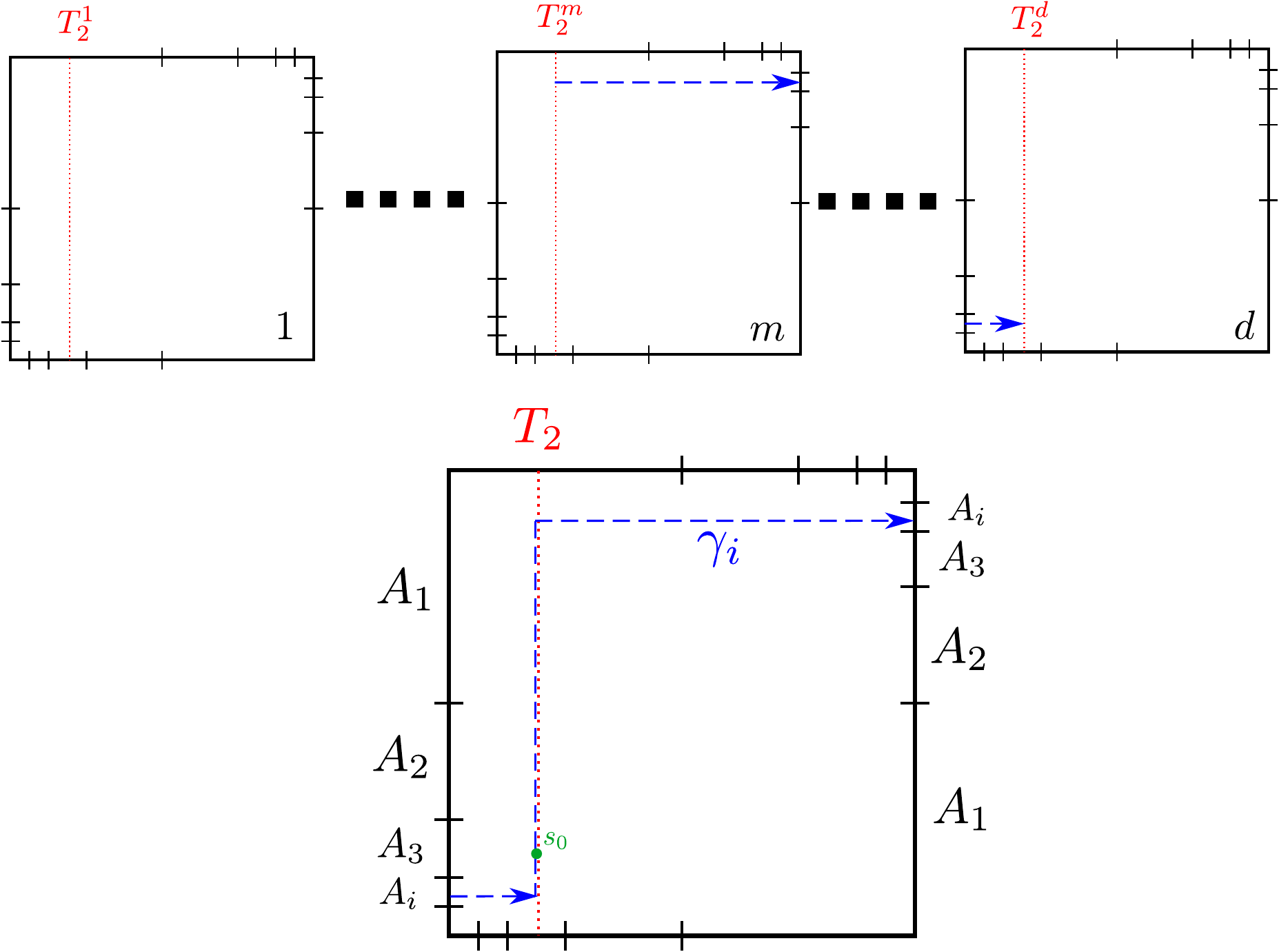}
\caption{{\em Bottom:} The surface $(S_2, \alpha_2)$ with the transversal $T_2$ depicted with a dotted line, and a curve $\gamma_i$ shown as a dashed line. {\em Top:} A $d$-fold cover $(\tilde S_{\psi},\tilde \alpha_{\psi})$ is built out of copies of the square labeled $\{1,\ldots, d\}$ with edges glued according to $\psi_+$. The dotted lines are the lifts of the transversals and the dashed is a lift of the horizontal part of the path of $\gamma_i$ depicted under the assumption that $\psi(\gamma_i)(m)=d$.}
\label{fig:skew}
\end{figure}

The general case is not qualitatively different. Following the same ideas in the previous paragraph, we define $\gamma_i$, $\Gamma^+$
and $\Gamma$ for all integers $n \geq 2$.

Fix a $d \geq 2$ and a subgroup $G \subset \Pi_d$ that acts transitively on $\{1,\ldots, d\}$. 
Corollaries \ref{cor:lifting} and \ref{cor:2} imply that $m_G$-almost every cover has a uniquely ergodic translation flow,
where $m_G$ is the probability measure on $\Cov_G(S_n,\alpha_n)$  from \S \ref{sect:measures}. 
Recall that $m_G$ is the measure induced by the quotient \eqref{eq:cov chamanara} from the product measure $\mu$ on 
$\Hom(\Gamma,G)$; see Definition \ref{def:random cover} of in \S \ref{sect:measures}.
Thus, for $\mu$-a.e. $h \in \Hom(\Gamma,G)$, the corresponding cover
$(\tilde S_h, \tilde \alpha_h)$ has uniquely ergodic translation flow.

Consider the automorphism 
$\phi^{-1}_\beta$ of $\pi_1(S_n, \alpha_n)$ given in \eqref{eq:phi beta}. We observe by inspecting the action
of $\phi^{-1}$ that
$$\phi^{-1}_\beta(\gamma_i)=\gamma_1^{n-1} \gamma_{i+1} \quad \text{for all $i \geq 1$},$$
where calculations are done in the fundamental group.
This implies that $\phi^{-1}_\beta(\Gamma^+) \subset \Gamma^+$.
The inclusion $i:\Gamma^+\rightarrow\Gamma$ then induces a surjective map $i^*: \Hom(\Gamma,G)\rightarrow \Hom(\Gamma^+,G)$ which commutes with the actions of $\phi^{-1}_\beta$. 
The set $\Hom(\Gamma^+,G)$ is a Cantor set with a product measure $\mu_{+}$ as defined in Definition \ref{def:random cover}. It is straight forward to see from the definitions of these measures that 
$\mu_{+} = (i^*)_*\mu$.

Let $i^*(\psi) = \psi_+\in\Hom(\Gamma^+,G)$ for some $\psi \in\Hom(\Gamma,G)$ and let 
$(\tilde S_{\psi},\tilde \alpha_{\psi})$ be the $d$-cover of $(S_n,\alpha_n)$ determined by $\psi$.
The transversal $T_2$ lifts to $\tilde{T}_2$, a transversal on $\tilde S_{\psi}$
for the translation flow. 
It is the union of the $d$ copies $T_2^1,\dots, T_2^d$ of $T_2$, which are also illustrated using dotted lines in the figure. As such, the translation flow on a cover $(\tilde S_{\psi}, \tilde \alpha_{\psi})$ is canonically a suspension flow of the skew product over the $n$-adic odometer. 
One can observe that this skew product is precisely $E_{\psi_+}$ given in \eqref{eqn:skew} of the introduction.
Whenever the translation flow on $(\tilde S_{\psi}, \tilde \alpha_{\psi})$ is uniquely ergodic,
the skew product $E_{\psi_+}$ must be as well. Since this holds for $\mu$ a.e. $\psi$ and $\mu_{+} = (i^*)_*\mu$,
we see that for $\mu_+$ a.e. $\psi_+$, $E_{\psi_+}$ is uniquely ergodic.
\end{proof}

\subsection*{Pathological covers}
For the remainder of the section, we will concentrate on the simplest of Chamanara's surfaces,
$(S_2,\alpha_2)$. \compat{Note that previously this section called $(S_2,\alpha_2)$ simply $(S,\alpha)$. The referee objected so hopefully I have eliminated all $(S,\alpha)$ from this section.}
We will simplify the notation for the hyperbolic automorphism
$\phi_2$ by denoting it by $\phi$.

Our goal with the remainder of this section is to investigate what happens when we have a connected
cover $(\tilde S_2, \tilde \alpha_2)$ which when iterated by application of $\phi$
only accumulates on disconnected covers. The disconnected covers in $\Cov_G(S_n,\alpha_n)$ are given by 
$$\bigcup_{H \in \sH} \Pi_d \bs \Hom(\Gamma,H),$$
where $\Gamma=\pi_1(S_2,s_0)$ and $\sH$ denotes the collection of all subgroups of $G$ which fail to act transitively on $\{1,\ldots, d\}$. 

It can be observed that the connectivity of $(\tilde S, \tilde \alpha)$
does not guarantee the ergodicity of the (horizontal) translation flow, since the connectivity can be arranged with only the gluings of horizontal edges when building the cover as $d$ copies of the unit square with edge identifications. It is not surprising then that such connected devious covers are dense
inside of the space of covers:

\begin{theorem}[Non-ergodic covers]
\label{thm:Reza non-ergodic}
Let $h \in \Hom(\Gamma,G)$. The translation flow on the cover $(\tilde S_h,\tilde \alpha_h)$ 
of $(S_2,\alpha_2)$ associated to $[h]$ is non-ergodic whenever there is an $H \in \sH$ so that every accumulation point of $h \circ \phi_\beta^{-n}$ (as $n \to +\infty$) lies in $\Hom(\Gamma,H)$. For any $H \in \sH$, there exists a
 collection of connected covers dense in $\Cov_G(S_2,\alpha_2)$
 so that every accumulation point lies in $\Pi_d \bs \Hom(\Gamma,H)$
 but no accumulation point lies in $\Pi_d \bs \Hom(\Gamma,H')$ for any proper subgroup $H' \subset H$.
\end{theorem}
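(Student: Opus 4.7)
The two assertions are handled separately.

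\textbf{Non-ergodicity (first assertion).} The strategy is to exhibit a flow-invariant Borel set in $(\tilde S_h, \tilde \alpha_h)$ of intermediate Lebesgue measure. First I would carry out a block-type reduction. Fix a proper non-empty $H$-invariant subset $A \subsetneq \{1, \ldots, d\}$ and let $G_A$ be its set-stabilizer in $G$, which contains $H$. The $G$-action on $G/G_A$ gives an intermediate quotient cover $\tilde S_h \to \tilde S_h^A \to S$. Since ergodicity of the translation flow passes to quotient covers (any flow-invariant set on $\tilde S_h^A$ pulls back to one on $\tilde S_h$ with measure rescaled by the degree), it suffices to prove non-ergodicity on $\tilde S_h^A$. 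The accumulation hypothesis descends because the image of $H$ in $\text{Sym}(G/G_A)$ fixes the coset $G_A$ and is therefore non-transitive.

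The horizontal flow on $\tilde S_h^A$ is the suspension of a skew product over the $2$-adic odometer with fiber $G/G_A$. Because $\phi$ is a pseudo-Anosov on $(S,\alpha)$ and its action on $\pi_1(S,s_0)$ moves the supports of generators toward arbitrarily high indices under $\phi_\delta^{-n}$ as $n \to \infty$, the accumulation hypothesis forces the defining cocycle $\psi$ to take values in the stabilizer of the coset $G_A$ outside a ``low-depth'' portion of $X_2$ (namely on a set that is a union of sufficiently deep cylinder sets). The plan is to combine this with the tower structure of the $2$-adic odometer to write the non-stabilizing part of $\psi$ as a measurable coboundary, producing a flow-invariant measurable function $\tilde S_h^A \to G/G_A$ whose preimage of $\{G_A\}$ is a flow-invariant Borel set of measure $|G_A|/|G| \in (0,1)$. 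The cohomological step is where I expect the principal technical difficulty: verifying that ``$\psi$ takes values in the stabilizer on cylinders of sufficient depth'' really yields a coboundary decomposition of the residual part.

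\textbf{Density (second assertion).} Fix $H \in \sH$ and a non-empty open $U \subset \Cov_G(S,\alpha)$. Basic open sets are $\Pi_d$-quotients of cylinder sets prescribing the images of finitely many generators; shrinking $U$, assume it is defined by prescribed values $h(\gamma_i) \in G$ for $i$ in a finite index set $I \subset \N$. Extend $I$ slightly if necessary and select the new values in $G$ so that together with all of $H$ they generate a transitive subgroup of $G$; this is possible because $G$ itself acts transitively. For every $i \notin I$, define $h(\gamma_i)$ by cycling through a fixed enumeration of $H$.

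The verification is then short. The cover is connected because the image of $h$ contains a transitive generating set. Every accumulation point of $h \circ \phi_\delta^{-n}$ lies in $\Hom(\pi_1(S,s_0), H)$ because $\phi_\delta^{-n}$ eventually sends any fixed generator to a word supported on generators indexed outside $I$, on which $h$ takes values in $H$, and because $H$ is finite the limit of any convergent subsequence lies in $H$. Finally, no accumulation point has image contained in any proper subgroup $H' \subsetneq H$ (even up to $\Pi_d$-conjugation): because every element of $H$ is hit infinitely often among the high-index generators, pigeonhole realizes every element of $H$ as the limiting value on some fixed generator along an appropriate subsequence, so each accumulation point has image of cardinality $|H|$, and a conjugate of $H'$ cannot contain a group of that order.

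The main obstacle is the cohomological coboundary argument in the first part.
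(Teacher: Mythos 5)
Both halves of your proposal have genuine gaps that trace back to the same source: you are reasoning in the coordinates $\{h(\gamma_i)\}$, whereas the right coordinates here are the $G$-sequence $g_m = h\circ\phi_\delta^{-m}(\gamma_1)$, because $\phi_\delta^{-1}$ does not act by shifting indices on $\{\gamma_i\}$ but it \emph{does} act by the shift on $\langle g_m\rangle$.

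\textbf{Density.} The claim that ``$\phi_\delta^{-n}$ eventually sends any fixed generator to a word supported on generators indexed outside $I$'' is simply false. From $\phi^{-1}_\delta(\gamma_n)=\gamma_1\gamma_{n+1}$ for $n>0$, one computes $\phi_\delta^{-1}(\gamma_1)=\gamma_1\gamma_2$, $\phi_\delta^{-2}(\gamma_1)=\gamma_1\gamma_2\gamma_1\gamma_3$, $\phi_\delta^{-3}(\gamma_1)=\gamma_1\gamma_2\gamma_1\gamma_3\gamma_1\gamma_2\gamma_1\gamma_4$, and so on: every word $\phi_\delta^{-n}(\gamma_j)$ continues to involve $\gamma_1,\gamma_2$. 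So prescribing $h(\gamma_i)$ to cycle through $H$ for $i\notin I$ gives you no control over $h\circ\phi_\delta^{-n}(\gamma_j)$, because those values are products that include the ``bad'' low-index $h(\gamma_i)$ factors with no cancellation. The paper instead works with the $G$-sequence, shows (Lemma \ref{lem:G-sequence}) that $h\mapsto\langle g_m\rangle$ is a homeomorphism conjugating $\phi_\delta^{-1}$ to the shift, and then prescribes the cyclic enumeration of $H$ on $\langle g_m\rangle$ for $m\geq K$. The density, connectivity, and non-collapse-to-$H'$ arguments all become straightforward in those coordinates and do not work in yours.

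\textbf{Non-ergodicity.} The same coordinate issue undermines your reduction: the accumulation hypothesis gives $g_k\in H$ for $k\geq K$, but by the inverse formula of Lemma \ref{lem:G-sequence}, $h(\gamma_n)=g_0^{-1}g_1^{-1}\cdots g_{n-2}^{-1}g_{n-1}$ for $n>1$ lies in the fixed left coset $(g_0^{-1}\cdots g_{K-1}^{-1})H$, not in $H$. So the odometer cocycle built from $h$ is \emph{not} eventually valued in the stabilizer, and your proposed decomposition ``stabilizer-valued tail plus residual coboundary'' starts from an incorrect premise. The coboundary step you flag as the main obstacle is also simply unnecessary. The paper's move is to conjugate by $\phi^K$: the cover associated to $h\circ\phi_\delta^{-K}$ has $h\circ\phi_\delta^{-K}(\gamma_n)\in H$ for \emph{all} $n\geq 1$ (Lemma \ref{lem:G-sequence}), the horizontal flow on Chamanara's surface only crosses the positively labeled edges, and therefore the union of the $d$ squares indexed by an $H$-orbit is literally a flow-invariant subsurface of intermediate measure. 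Pulling back along the affine map with diagonal derivative induced by $\phi^K$ transports this invariant set to $(\tilde S_h,\tilde\alpha_h)$. No measurable rigidity or cohomology enters at all.
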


It is interesting to consider whether there are connected covers which accumulate only on disconnected covers under the Teichm\"uller deformation but whose translation flow is nonetheless uniquely ergodic. 
We show such covers exist.
This is analogous to sufficiently slow divergence of the  Teichm\"uller deformation giving rise to unique ergodicity in the classical setting of closed translation surfaces as in \cite{CE07} \cite{rodrigo:erg}.

\begin{theorem}
\label{thm:Reza slow}
Suppose $G$ is a subgroup of $\Pi_d$, and $H_1, H_2 \subset G$ are subgroups which do not act transitively
on $\{1,\ldots,d\}$, but the group generated by the elements of $H_1 \cup H_2$ does act transitively.
Then, there are finite covers of $(S_2,\alpha_2)$ with monodromy in $G$ 
whose translation flow is uniquely ergodic, but whose orbit under $\phi$ accumulates only on surfaces in the collection of disconnected covers,
$$\Pi_d \bs \big(\Hom(\Gamma,H_1) \cup \Hom(\Gamma,H_2)\big).$$
\end{theorem}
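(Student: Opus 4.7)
The plan is to construct a monodromy representation $h \colon \pi_1(S,s_0) \to G$ whose image generates the transitive subgroup $\langle H_1 \cup H_2 \rangle$ (making the cover $(\tilde S_h,\tilde \alpha_h)$ connected, via Proposition~\ref{prop:connectivity}), but whose iterates $h \circ \phi_\delta^{-n}$ accumulate only on representations landing in $H_1$ or $H_2$, while still admitting enough geometric control to force unique ergodicity via Theorem~\ref{thm:integrability}. The guiding heuristic is that $\phi$ has derivative $\mathrm{diag}(\tfrac12,2)$: under $\phi^{-1}$, horizontal edges of length $2^{-k}$ on Chamanara's square grow to length $2^{-(k-1)}$ after re-gluing, so that $\phi_\delta^{-1}$ acts on the natural free basis of $\pi_1(S,s_0)$ given by the horizontal and vertical edge identifications in Figure~\ref{fig:Chamanara} by an approximate shift in the scale index. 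The precise combinatorics can be extracted from the first-return action of $\phi$ on the cross-section that realizes the horizontal flow on $(S,\alpha)$ as a suspension of the $2$-adic odometer.

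Taking this shift structure as given, enumerate the generators of $\pi_1(S,s_0)$ as $g_1, g_2, \ldots$ so that $\phi_\delta^{-n}$ acts (up to finitely many corrections) by $g_k \mapsto g_{k+n}$. Partition $\mathbb{N}$ into alternating blocks $B_1, B_2, \ldots$ of types $H_1$ and $H_2$, with lengths $N_j$ tending to infinity slowly, and define $h(g_k)$ to lie in $H_i$ when $k \in B_j$ is in an $H_i$-block, varying the values so that $h$ realizes enough elements of each $H_i$ that the total image generates $\langle H_1 \cup H_2 \rangle$. Connectivity of $(\tilde S_h, \tilde \alpha_h)$ follows. To verify the accumulation property, suppose $n_j \to \infty$ with $h \circ \phi_\delta^{-n_j}$ convergent pointwise; for each fixed $k$, $(h \circ \phi_\delta^{-n_j})(g_k) = h(g_{k+n_j})$, and since block lengths diverge, for $j$ large both $n_j$ and $n_j + k$ lie in the same block $B_{j(k)}$. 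The type $i(j) \in \{1,2\}$ of this block must stabilize for the limit to exist, and then a diagonal argument over $k$ forces every coordinate of the limit into a single $H_i$, so the limit lies in $\mathrm{Hom}\bigl(\pi_1(S,s_0), H_i\bigr)$.

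For unique ergodicity I apply Theorem~\ref{thm:integrability} to $(\tilde S_h,\tilde \alpha_h)$, exploiting that the Teichm\"uller trajectory of $(S,\alpha)$ in $\sO(S,\alpha)$ is periodic (since $\phi$ realizes $g^{\log 2}$ in the Veech group). At time $t_n = n \log 2$, the cover $g^{t_n}(\tilde S_h, \tilde \alpha_h)$ is translation equivalent to the cover of $(S,\alpha)$ with monodromy $h \circ \phi_\delta^{-n}$, and when $n$ is deep inside an $H_i$-block of length $L$, this cover decomposes---up to the finitely many edges corresponding to out-of-block generators---into $d/|H_i\text{-orbit}|$ connected pieces of uniformly bounded diameter, with those bad connecting edges of length of order $2^{-L/2}$ and distance at least that to the singular set. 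Choosing $N_j$ to grow sufficiently slowly (for instance $N_j = \lfloor \log\log j \rfloor$), the block length at time $t$ is of order $\log\log t$, so $\mathcal{D}_t^i$ stays bounded and $\varepsilon(t)^{-1}$, $\delta_t^{-1}$, and $C_t$ grow at most sub-polynomially in $t$. Interpolating between the times $t_n$ using compactness of the flow box as in the proof of Theorem~\ref{thm:1}, the integrand in \eqref{eqn:integrability2} is bounded below by a slowly-varying function of $t$, and the integral diverges. This yields ergodicity of the translation flow on the cover, and since the base surface is a suspension over the uniquely ergodic $2$-adic odometer, Corollary~\ref{cor:lifting} promotes ergodicity to unique ergodicity.

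The hard part will be the quantitative trade-off in the last step: the block lengths $N_j$ must grow fast enough that every $\phi$-accumulation point truly lies in $\mathrm{Hom}(\pi_1,H_1) \cup \mathrm{Hom}(\pi_1,H_2)$, yet slowly enough that the geometric invariants of $g^{t_n}(\tilde S_h,\tilde \alpha_h)$ do not deteriorate fast enough to violate \eqref{eqn:integrability2}. Because the integrand is only squared while sub-polynomial block lengths translate into sub-polynomial growth of the geometric quantities, there is substantial margin; the main technical work is (a) making the shift action of $\phi_\delta^{-1}$ on a concrete free basis sufficiently precise to carry out the combinatorial bookkeeping, and (b) turning the heuristic decomposition of the nearly-disconnected cover into rigorous bounds on $\mathcal{D}_t^i$, $\varepsilon(t)^{-1}$, $\delta_t^{-1}$, and $C_t$ as functions of $t$ and the block structure.
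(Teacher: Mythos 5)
Your construction is missing a crucial ingredient, and without it the claim that every $\phi$-accumulation point lands in $\mathrm{Hom}(\pi_1,H_1)\cup \mathrm{Hom}(\pi_1,H_2)$ is simply false. You partition the index set into alternating blocks of $H_1$-type and $H_2$-type and then assert that, because block lengths diverge, ``for $j$ large both $n_j$ and $n_j+k$ lie in the same block.'' That conclusion does not follow: block lengths diverging makes the block \emph{boundaries} sparse, but nothing prevents the sequence $n_j$ from tracking those boundaries. Choose $n_j$ to be the last index of the $j$-th block; then the limit of $h\circ\phi_\delta^{-n_j}$ (along any convergent subsequence) sees, within each fixed-size window, tail generators from an $H_1$-block \emph{and} head generators from an adjacent $H_2$-block. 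Since you arranged for $h$ to realize generating sets of each $H_i$, such a limit can generate the transitive group $\langle H_1\cup H_2\rangle$, hence represent a connected cover. The paper avoids this by inserting \emph{buffer blocks} taking values in $H_1\cap H_2$ between every $H_1$-block and $H_2$-block, with the lengths $\ell_i$ of these buffers tending to infinity; this is precisely the content of the $p$-ready condition and of Proposition~\ref{prop:disconnected}. With buffers, any finite window eventually sees values from $H_1$ only or from $H_2$ only (plus intersection elements), because a window of bounded width cannot straddle a buffer of divergent length. Without them your accumulation set is strictly larger than claimed.

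A secondary concern: the identity $(h\circ\phi_\delta^{-n})(\gamma_k)=h(\gamma_{k+n})$ that your bookkeeping rests on is not correct as stated. The paper's Lemma~\ref{lem:G-sequence} shows that $h\circ\phi_\delta^{-k}(\gamma_n)$ is a \emph{product} $g_k^{-1}g_{k+1}^{-1}\cdots g_{k+n-2}^{-1}g_{k+n-1}$ (and similarly for $n\le 0$), not a single symbol; the genuine shift conjugacy holds only after passing to the derived $G$-sequence $g_m=h\circ\phi_\delta^{-m}(\gamma_1)$. You gesture at ``finitely many corrections,'' but these corrections are not bounded in number; they grow with $|n|$. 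Carrying your argument through requires working with the $G$-sequence coordinate, as the paper does, and this also changes what it means for a window of generators to generate a transitive subgroup. Finally, your geometric estimate uses a decomposition into $C_t>1$ pieces at times deep inside an $H_i$-block and relies on controlling $\delta_t$ between pieces; the paper instead picks out the times near the \emph{center} of each $1^{L_1}0^{\ell_i}2^{L_2}$ pattern, where a single connected subsurface (so $C_t=1$) of nearly full area exists with uniformly controlled $\varepsilon(t)$ and diameter, and calibrates the buffer lengths $\ell_i$ against explicit constants $c(\eta_i,J_m)$ to make the integral diverge. Your alternative is not impossible, but it needs the same missing buffer structure to locate usable times, and you have not actually verified the trade-off you flag as ``the hard part.''
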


\begin{remark}
It seems likely that there are also non-ergodic covers whose orbits under $\phi$ accumulate as in Theorem \ref{thm:Reza slow}. We do not investigate this question.
\end{remark}

The key to proving these results is an understanding of the action on $\Hom(\Gamma,G)$ given by
$$h \mapsto h \circ \phi^{-1}_\beta,$$
where $\phi^{-1}_\beta: \Gamma \to \Gamma$ is the automorphism from \eqref{eq:phi beta}.

In order to describe this action, we select a generating set for $\Gamma$. For each integer
$n$, we will let $\gamma_n \in \Gamma$ be a homotopy class of curves which start and end at the basepoint.
If $n \leq 0$, we define $\gamma_n$ to contain the curves which move downward from the basepoint passing through the horizontal edge labeled $n$ and returning to the basepoint without passing through any other labeled edges. We similarly define $\gamma_n$ for $n>0$ to contain the curves which move rightward over the vertical edge labeled $n$. 
(This is compatible with the definition of $\gamma_n$ for $n \geq 1$ in the Proof of Theorem \ref{thm:skew},
and depicted in Figure \ref{fig:skew}.)
Observe:
\begin{equation}
\label{eq:generated}
\Gamma=\pi_1(S_2,s_0) \quad \text{is freely generated by} \quad \{\gamma_n~:~n \in \Z\}.
\end{equation}
(The curves $\gamma_n$ can be taken to be pairwise disjoint except at the basepoint and so the union of these curves is a bouquet of countably many circles. There is a deformation retraction of the surface to this bouquet.)

The dynamics of $\phi_\beta$ action on $\Hom(\Gamma,G)$ turn out to be conjugate to the shift on $G^\Z$:

\begin{lemma}
\label{lem:conjugacy}
The map $\gmap: \Hom\big(\Gamma,G\big) \to G^\Z$ defined by
$$\gmap(h)_m=h \circ \phi_\beta^{-m}(\gamma_1)$$
is a homeomorphism. Let $\sigma:G^\Z \to G^\Z$ be the shift map $\sigma(\g)_i=\g_{i+1}$. Then 
$$\gmap(h \circ \phi_\beta^{-1})=\sigma \circ \gmap(h) \quad \text{for all $h \in \Hom\big(\Gamma,G\big)$.}$$
\end{lemma}

We call $\gmap(h)$ the {\em $G$-sequence} of $h$. As a first step to proving this theorem, we work out the action
of $\phi_\beta^{-1}$ and its inverse $\phi_\beta$ on $\Gamma$:

\begin{proposition}
For each $n \in \N$, we have 
$$\phi^{-1}_\beta(\gamma_n)=\begin{cases}
\gamma_{n+1} \gamma_1^{-1} & \text{if $n < 0$} \\
\gamma_1 & \text{if $n=0$} \\
\gamma_1 \gamma_{n+1} & \text{if $n>0$,}
\end{cases}
\and 
\phi_\beta(\gamma_n)=\begin{cases}
\gamma_{n-1} \gamma_{0} & \text{if $n \leq 0$}\\
\gamma_0 & \text{if $n=1$}\\
\gamma_0^{-1} \gamma_{n-1} & \text{if $n>1$.}
\end{cases}
$$
\end{proposition}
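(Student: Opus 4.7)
The plan is to prove this by explicit geometric computation on Chamanara's square model of $(S,\alpha)$. Using coordinates on $[0,1]^2$ with the prescribed identifications, the pseudo-Anosov $\phi$ is piecewise affine: $\phi(x,y) = (x/2, 2y)$ when $y \leq 1/2$ and $\phi(x,y) = (x/2+1/2, 2y-1)$ when $y \geq 1/2$, so that $\phi$ folds the unit square into the $\tfrac12 \times 2$ image rectangle through the edge-$0$ identification. Consequently $\phi^{-1}(u,v) = (2u, v/2)$ on the left half $u \leq 1/2$ and $\phi^{-1}(u,v) = (2u-1, (v+1)/2)$ on the right half $u \geq 1/2$. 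A direct inspection then shows that $\phi^{-1}$ sends horizontal edge $n$ to horizontal edge $n+1$ for $n < 0$, sends edge $0$ to the interior horizontal arc $\{v = 1/2\}$ (not a labeled edge), and sends vertical edge $n$ to vertical edge $n+1$ for $n \geq 1$. The essential new feature is that the interior vertical line $\{u = 1/2\}$ of the domain, though not an edge, is carried by $\phi^{-1}$ onto the entire vertical edge $1$ of the target.

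For the computation I would fix $s_0 = (\varepsilon,\varepsilon)$ with $\varepsilon > 0$ small, represent each $\gamma_n$ by a nearly straight loop from $s_0$ through the midpoint of edge $n$ and back, and then read off the image loop $\phi^{-1} \circ \gamma_n$ based at $\phi^{-1}(s_0) = (2\varepsilon, \varepsilon/2)$ by bookkeeping edge crossings. Each original crossing of edge $n$ produces one crossing of $\phi^{-1}(\text{edge }n)$ in the image (contributing a factor $\gamma_{n+1}^{\pm 1}$, or nothing when $n=0$), and each crossing of the interior line $\{u = 1/2\}$ in the domain produces one edge-$1$ crossing in the image (contributing $\gamma_1^{\pm 1}$). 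A case analysis on whether the outbound and return segments of $\gamma_n$ cross $\{u = 1/2\}$ yields: for $n = 0$ the outbound crosses but the return does not, giving $\gamma_1$ (with the edge-$0$ crossing trivialized); for $n \leq -1$ the outbound does not cross but the return does, traversed in reverse orientation and after the preserved edge-$(n+1)$ crossing, giving $\gamma_{n+1}\gamma_1^{-1}$; for $n \geq 1$ the outbound crosses $\{u=1/2\}$ first, followed by the edge-$(n+1)$ crossing, giving $\gamma_1\gamma_{n+1}$. Because $\delta$ is taken to stay in the interior of the square near the SW corner and both $s_0$ and $\phi^{-1}(s_0)$ lie there, the conjugation by $\delta$ simply transfers each loop from basepoint $\phi^{-1}(s_0)$ to basepoint $s_0$ without creating any new factors.

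The formulas for $\phi_\delta$ can be obtained either by the analogous direct calculation with $\phi$ in place of $\phi^{-1}$ (the roles of horizontal and vertical edges now being exchanged, with the interior line $\{v = 1/2\}$ becoming the preimage of edge $0$) or, more efficiently, by algebraically inverting the already-established relations for $\phi^{-1}_\delta$ on the free basis $\{\gamma_n\}_{n \in \Z}$; for instance $\phi_\delta(\gamma_1) = \gamma_0$ follows from $\phi^{-1}_\delta(\gamma_0) = \gamma_1$, and then $\phi_\delta(\gamma_n) = \gamma_0^{-1}\gamma_{n-1}$ for $n > 1$ follows from $\phi_\delta(\gamma_1)\phi_\delta(\gamma_n) = \phi_\delta \circ \phi^{-1}_\delta(\gamma_{n-1}) = \gamma_{n-1}$, while $\phi_\delta(\gamma_n) = \gamma_{n-1}\gamma_0$ for $n \leq 0$ follows from $\phi_\delta\bigl(\phi^{-1}_\delta(\gamma_n)\bigr) = \gamma_n$ applied to $\phi^{-1}_\delta(\gamma_n) = \gamma_{n+1}\gamma_1^{-1}$. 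The principal obstacle is the careful orientation bookkeeping: verifying that the return portion of $\gamma_n$ for $n \leq -1$ traverses $\{u = 1/2\}$ from right to left (producing $\gamma_1^{-1}$) while the outbound portion of $\gamma_n$ for $n \geq 1$ traverses it from left to right (producing $\gamma_1$), and confirming that the chosen representative paths avoid every other labeled edge.
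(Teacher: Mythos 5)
The paper gives no proof of this proposition, remarking only that ``This proposition may be proved by inspecting the action of $\phi^{-1}_\delta$, and we leave the details to the reader.'' Your proposal carries out exactly that inspection and is correct. The central geometric observation you identify --- that the interior vertical line $\{u=1/2\}$ is carried by $\phi^{-1}$ onto both copies of vertical edge $1$, while horizontal edge $0$ collapses to the interior arc $\{v=1/2\}$, and all other labeled edges shift their index by one --- is precisely what drives the formulas, and your orientation bookkeeping (left-to-right crossings of $\{u=1/2\}$ producing $\gamma_1$, right-to-left crossings producing $\gamma_1^{-1}$) is consistent with the edge identification $(1,y)\sim(0,y+1/2)$ once one reads off the figure's labeling convention. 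The remark that $\delta$ contributes nothing because it stays in the interior near the SW corner is also the right justification. Deriving $\phi_\delta$ from $\phi^{-1}_\delta$ algebraically on the free basis $\{\gamma_n\}$ rather than repeating the geometric argument with the roles of horizontal and vertical swapped is a clean and valid shortcut; applying $\phi_\delta$ to $\phi^{-1}_\delta(\gamma_n)=\gamma_{n+1}\gamma_1^{-1}$ for $n<0$ yields $\phi_\delta(\gamma_m)=\gamma_{m-1}\gamma_0$ exactly on the range $m\le 0$ (not $m=1$), so the piecewise formula comes out right. You have also correctly interpreted the evident typo in the stated proposition, where the third case should read $n>0$.
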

This proposition may be proved by inspecting the action of $\phi^{-1}_\beta$ as defined in 
\eqref{eq:phi beta}. We leave the details to the reader.

The following lemma describes how to recover information about the $\phi_\beta$-orbit of $h$ from its $G$-sequence.

\begin{lemma}
\label{lem:G-sequence}
Let $\g=\gmap(h)$ be the $G$-sequence of a homomorphisms $h:\Gamma \to G$. Then, for each $k, n \in \Z$,
$$h \circ \phi_\beta^{-k}(\gamma_n)=\begin{cases}
\g_{k+n-1}\g_{k+n}^{-1} \g_{k+n+1}^{-1} \ldots \g_{k-1}^{-1} & \text{if $n<0$}\\
\g_{k-1} & \text{if $n=0$}\\
\g_k & \text{if $n=1$} \\
\g_k^{-1} \g_{k+1}^{-1} \ldots \g_{k+n-2}^{-1} \g_{k+n-1} & \text{if $n>1$.}
\end{cases}$$
\end{lemma}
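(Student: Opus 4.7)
The plan is to first reduce the statement to the case $k=0$ by exploiting the fact that $\phi_\delta^{-1}$ acts by shift on the $G$-sequence, and then to verify the $k=0$ formulas by inductions on $n$ in the two directions $n > 1$ and $n < 0$, using the recursive description of $\phi_\delta^{\pm 1}$ supplied by the preceding proposition.

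For the reduction, observe that for each $k\in\Z$ the homomorphism $h' := h \circ \phi_\delta^{-k}$ has $G$-sequence
\[
g'_m \;=\; h'\circ\phi_\delta^{-m}(\gamma_1) \;=\; h\circ\phi_\delta^{-(m+k)}(\gamma_1) \;=\; g_{m+k}.
\]
Consequently the claimed identity for $h\circ\phi_\delta^{-k}(\gamma_n)=h'(\gamma_n)$ follows from the $k=0$ identity applied to $h'$, simply by substituting $g_{i+k}$ for $g_i$ throughout the product. It therefore suffices to prove the four formulas in the $k=0$ case.

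The base cases $n=1$ and $n=0$ are immediate: $h(\gamma_1)=g_0$ by definition, and $\phi_\delta(\gamma_1)=\gamma_0$ gives $h(\gamma_0)=h\circ\phi_\delta(\gamma_1)=g_{-1}$. For $n>1$ I would use upward induction based on the relation $\phi_\delta^{-1}(\gamma_{n-1})=\gamma_1\gamma_n$ from the preceding proposition, rearranged as $\gamma_n=\gamma_1^{-1}\phi_\delta^{-1}(\gamma_{n-1})$. Applying $h$, the shift property with $k=1$ replaces each $g_i$ in the inductive formula for $h(\gamma_{n-1})$ by $g_{i+1}$, yielding $h(\phi_\delta^{-1}(\gamma_{n-1}))=g_1^{-1}\cdots g_{n-2}^{-1}g_{n-1}$, and premultiplying by $h(\gamma_1)^{-1}=g_0^{-1}$ produces the claimed product $g_0^{-1}g_1^{-1}\cdots g_{n-2}^{-1}g_{n-1}$. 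For $n<0$ I would use downward induction based on $\phi_\delta^{-1}(\gamma_{n-1})=\gamma_n\gamma_1^{-1}$, rearranged as $\gamma_{n-1}=\phi_\delta(\gamma_n)\gamma_0^{-1}$; the shift property with $k=-1$ sends $g_i$ to $g_{i-1}$, so $h(\phi_\delta(\gamma_n))$ equals the inductive formula for $h(\gamma_n)$ shifted by $-1$, and right-multiplying by $h(\gamma_0)^{-1}=g_{-1}^{-1}$ appends the missing final factor to give $g_{n-2}g_{n-1}^{-1}\cdots g_{-1}^{-1}$.

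There is no substantive obstacle here; the argument is purely bookkeeping with indices and telescoping products. The only delicate point is to verify that the boundary terms match the case split in the statement at the transitions $n\in\{-1,0,1,2\}$, where an entry of the form $g_j$ versus $g_j^{-1}$ appears at the end of a product; this amounts to checking $h(\gamma_2)=g_0^{-1}g_1$ and $h(\gamma_{-1})=g_{-2}g_{-1}^{-1}$ directly to initialize the two inductions.
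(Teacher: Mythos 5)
Your proposal is correct and takes essentially the same approach as the paper. The only cosmetic difference is that you first dispatch the $k$-parameter by observing that $h\circ\phi_\delta^{-k}$ has $G$-sequence $\langle g_{m+k}\rangle$ and then run a clean two-sided induction on $n$ at $k=0$ (re-invoking the shift by $\pm1$ at each inductive step), whereas the paper carries $k$ explicitly through the induction (``for all $k$'' as part of the inductive hypothesis); the underlying use of $\phi_\delta^{-1}(\gamma_m)=\gamma_1\gamma_{m+1}$ and $\phi_\delta(\gamma_m)=\gamma_{m-1}\gamma_0$, the telescoping, and the base cases at $n=1$ and $n=0$ are identical.
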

\begin{proof}
Observe that by definition $h \circ \phi_\beta^{-k}(\gamma_1)=\g_k$ for all $k \in \Z$. This case of $n=1$ will serve as a base case for proving the statement holds when $n \geq 1$. Note that the formula given in the case of $n>1$ can be extended to hold for $n=1$ if one allows the (empty) product of inverses $\g_k^{-1} \g_{k+1}^{-1} \ldots \g_{k+n-2}^{-1}$
to be the identity when $n=1$. So, suppose our formula holds for some $n \geq 1$ and all $k$, 
we will show it holds for $n+1$ and all $k$. Using the proposition, we observe that for $n \geq 1$, 
$$h \circ \phi_\beta^{-k-1}(\gamma_n)=h\circ \phi_\beta^{-k}(\gamma_1 \gamma_{n+1})=\g_k \cdot h\circ \phi_\beta^{-k}(\gamma_{n+1}).$$
By our inductive hypothesis applied to the left side, we see that 
$$\g_{k+1}^{-1} \g_{k+2}^{-1} \ldots \g_{k+n-1}^{-1} \g_{k+n}=\g_k \cdot h\circ \phi_\beta^{-k}(\gamma_{n+1})$$
which gives us that $\phi_\beta^{-k}(\gamma_{n+1})=\g_k^{-1} \ldots \g_{k+n-1}^{-1} \g_{k+n}$. This proves
our formula for $n+1$ and and all $k$. So, by induction, the statement holds for $n \geq 1$.

Now we consider the base case of $n=0$. Observe that 
$$h \circ \phi_\beta^{-k}(\gamma_0)=h \circ \phi_\beta^{-k+1} \circ \phi^{-1}_\beta(\gamma_0)=h \circ \phi_\beta^{-k+1}(\gamma_1)=\g_{k-1}.$$
We will proceed by induction to cover cases with $n<0$. The case of $n=0$ can serve as our base case since
when $n=0$ we have $\g_{k+n-1}\g_{k+n}^{-1} \g_{k+n+1}^{-1} \ldots \g_{k-1}^{-1}=\g_{k-1}$ by convention as above since the product of negations
$\g_{k+n}^{-1} \g_{k+n+1}^{-1}\ldots \g_{k-1}^{-1}$ is an empty product (as $k+n>k-1$ here).
Now suppose the formula holds for some $n\leq 0$ and all $k$. Then, by the proposition and the base case
$$h \circ \phi_\beta^{-k+1}(\gamma_n)=h \circ \phi_\beta^{-k}(\gamma_{n-1}\gamma_0)=h \circ \phi_\beta^{-k}(\gamma_{n-1}) \cdot \g_{k-1}.$$
By inductive hypothesis, we see
$$h \circ \phi_\beta^{-k}(\gamma_{n-1})=h \circ \phi_\beta^{-k+1}(\gamma_n) \cdot \g_{k-1}^{-1}=
(\g_{k+n-2}\g_{k+n-1}^{-1} \ldots \g_{k}^{-1}) \g_{k-1}^{-1}.$$
This completes the inductive step, proving the statement for all $n \leq 0$. 
\end{proof}

This Lemma allow us to prove that $\gmap$ is a topological conjugacy:
\begin{proof}[Proof of Lemma \ref{lem:conjugacy}]
The map $\gmap$ is clearly continuous. The identity provided in Lemma \ref{lem:G-sequence} considered in the special case of $k=0$ allows us to evaluate $h$ on the generators $\{\gamma_n\}$ of $\Gamma$ in terms of $\gmap(h)$. This therefore gives an inverse $\gmap^{-1}$, and we observe that it is continuous. The conjugacy equation is easily verified:
$$\gmap(h \circ \phi_\beta^{-1})_m=h \circ \phi_\beta^{-1} \circ \phi_\beta^{-m}(\gamma_1)=h \circ \phi_\beta^{-m-1}(\gamma_1)=\gmap(h)_{m+1}=\big(\sigma \circ \gmap(h)\big)_m$$
for all $m \in \Z$.
\end{proof}

Now we prove Theorem \ref{thm:Reza non-ergodic} on the non-ergodicity of covers.
\begin{proof}[Proof of Theorem \ref{thm:Reza non-ergodic}]
Let $h:\Gamma \to G$ be a homomorphism, and suppose that all accumulation points of
$h \circ \phi_\beta^{-k}$ as $k \to \infty$ lie in $\Hom(\Gamma,H)$ for some subgroup $H \subset G$,
where $H$ does not act transitively on $\{1,\ldots, d\}$. 
We will show that the translation flow on $(\tilde S_h, \tilde \alpha_h)$ is not ergodic. By compactness of $\Hom(\Gamma,H)$, there is a $K$ so that
for each $k \geq K$, $h \circ \phi_\beta^{-k}(\gamma_1) \in H$. In terms of the $G$-sequence $\g=\gmap(h)$, we see that $\g_k \in H$ for $k \geq K$. It can be observed from the formula in Lemma \ref{lem:G-sequence}
that it follows that $h \circ \phi_\beta^{-K}(\gamma_n) \in H$ for all $n \geq 1$. Observe that the horizontal straight-line flow on the surface $(S_2,\alpha_2)$ only crosses
the intervals with positive label in Figure \ref{fig:Chamanara}. Now consider the cover associated to 
$h \circ \phi_\beta^{-K}$. The cover can be built from copies of the square indexed by $\{1,\ldots, d\}$ with edges identified according to 
$h \circ \phi_\beta^{-K}$. In particular, the intervals with positive label are glued only according to elements of $H$. Thus, points in copy $i \in \{1,\dots, d\}$ only can reach the copies of the square indexed by elements
of the orbit $H(i)$, and by assumption $H(i) \neq \{1,\ldots, d\}$. In particular the union of the squares
indexed by $H(i)$ gives an invariant set with measure strictly between zero and full measure. Note that $\phi_\beta^K$ induces an affine homeomorphism with diagonal derivative from the cover associated to $h$ to the cover associated to $h \circ \phi_\beta^{-K}$, so pulling back this invariant set gives an invariant subset of the straight line flow for the cover associated to $h$ with intermediate measure as desired.

We will now construct a dense set of devious covers as described in second sentence of the Theorem.
Consider the set $X_H \subset G^\Z$ consisting of those $\g \in G^\Z$ which satisfy the following statements:
\begin{enumerate}
\item The subgroup of $G$ generated by $\{\g_k:~k \in \Z\}$ acts transitively on $\{1,\ldots, d\}$.
\item There is a $K \in \Z$ so that $\g_k \in H$ for $k>K$.
\item There is a $B>0$ so that for any $k>K$, the subgroup $H \subset G$ is generated by 
$\{\g_k, \g_{k+1}, \ldots, \g_{k+B-1}\}.$
\end{enumerate}
It should be clear that $X_H$ is dense in $G^\Z$. Since $\gmap$ is a homeomorphism by Lemma \ref{lem:conjugacy}, we know $\gmap^{-1}(X_H)$ is dense in $\Hom(\Gamma,G)$. Now let $h \in \gmap^{-1}(X_H)$ and consider the cover $(\tilde S_h, \tilde \alpha_h)$ of $(S_2, \alpha_2)$. By Lemma \ref{lem:G-sequence},
the group generated by $\{\g_k\}$ coincides with the image $h(\Gamma)$ and so statement (1) implies that $\tilde S_h$ is connected. Now let $h'$ be any accumulation point of $h \circ \phi_\beta^{-n}$ taken as $n \to +\infty$.
Let $\g'=\gmap(h')$. Then by Lemma \ref{lem:conjugacy}, $\g'$ is an accumulation point of $\sigma^n(\g)$. 
Statement (2) then implies that $\g' \in H^\Z$ and so $h' \in \Hom(\Gamma,H)$ by Lemma \ref{lem:G-sequence}.
Statement (3) implies that $\{\g'_1, \g'_2, \ldots, \g'_B\}$ generates $H$ and Lemma \ref{lem:G-sequence}
then guarantees that $h'(\Gamma)=H$ so in particular $h' \not \in \Hom(\Gamma,H')$ for any proper subgroup $H' \subset H$.
\end{proof}

Now we will move toward proving Theorem \ref{thm:Reza non-ergodic} about the existence of covers with ergodic translation flow whose $\phi_\beta$-orbits accumulate only on disconnected covers. Recall that this theorem dealt with the situation when we have two subgroups $H_1$ and $H_2$ of $G$ which do not act transitively on $\{1,\ldots, d\}$ but which together act transitively.

We now introduce a combinatorial tool that will allow us
to find examples of $\g=\gmap(h)$ where the sequence of covers associated to $h \circ \phi_\beta^{-k}$ only has disconnected limits.
Let $a \in \Z$ and $A=\{n \in \Z~:~n \geq a\}.$ Let $p:A \to \{0, 1, 2\}$ be an arbitrary function
with the property that $p(n)+p(n+1) \neq 3$ for each $n \in A$ (or equivalently $p(n)=1$ implies $p(n+1) \neq 2$ and
$p(n)=2$ implies $p(n+1)\neq 1$.) For such a map $p$ and an $n \in A$, we define the {\em preimage
interval} containing $n$, $I(p,n) \subset \Z$, to be the maximal collection of consecutive elements of $A$ so that $n \in I(p,n)$ and $p$ is constant on $I(p,n)$. We'll say that $\g \in G^\Z$ is
{\em $p$-ready} \label{def:p-ready} if the following two statements hold for each $n \in A$:
\begin{itemize}
\item If $p(n)=0$, then $\g_n \in H_1 \cap H_2$.
\item If $p(n) \in \{1,2\}$, then $\g_n \in H_{p(n)}$ and $\{\g_k~:~k \in I(p,n)\}$ generates $H_{p(n)}$. 
\end{itemize}
Let $|I(p,n)|$ denote the number of integers in $I(p,n)$. 

\begin{proposition}[Criterion for disconnected accumulation points]
\label{prop:disconnected}
Let $h \in \Hom(\Gamma,G)$ and suppose $\g=\gmap(h)$ is $p$-ready. Suppose that 
\begin{equation}
\label{eq:growth of zeros}
\lim_{n \to \infty; ~p(n)=0} |I(p,n)|=\infty,
\end{equation}
(with the limit taken only over those $n$ so that $p(n)=0$).
Then every accumulation point of $h \circ \phi_\beta^{-k}$ as $k \to \infty$ corresponds to a disconnected cover. Moreover, these accumulation points all lie in 
$\Hom(\Gamma,H_1) \cup \Hom(\Gamma,H_2).$
\end{proposition}
\begin{proof}
Let $h_\infty$ be an accumulation point of $h \circ \phi_\beta^{-k}$ as $k \to \infty$. 
Then there is an increasing sequence of integers $k_j$ so that $h_\infty=\lim_{j \to \infty} h \circ \phi_\beta^{-k_j}$. Let $\g^\infty=\gmap(h_\infty)$. By Lemma \ref{lem:conjugacy}, we have
$$\g^\infty = \lim_{j \to \infty} \sigma^{k_j}(\g).$$

We claim there is an $i \in \{1,2\}$ so that
$\g^\infty_m \in H_i$ for all $m \in \Z$. If not, there are two indices $n_1$ and $n_2$ so that $\g^\infty_{n_1} \in H_1 \smallsetminus H_2$ and $\g^\infty_{n_2} \in H_2 \smallsetminus H_1$. Then for $j$ large enough,
$\sigma^{k_j}(\g)_{n_1}=\g_{k_j+n_1}$ and $\sigma^{k_j}(\g)_{n_2}=\g_{k_j+n_2}$ have the same property. 
From the definition of $p$-ready we have $p(k_j+n_1)=1$ and $p(k_j+n_2)=2$ so between $k_j+n_1$ and $k_j+n_2$ there is a $m_j \in \Z$ so that $p(m_j)=0$ and we must have $|I(p,m_j)| < |n_1-n_2|$. This violates \eqref{eq:growth of zeros}.

We have shown $\g^\infty \in H_i^\Z$. Lemma \ref{lem:G-sequence} implies that
$h_\infty \in \Hom(\Gamma, H_i)$.
\end{proof}

Our proof of Theorem \ref{thm:Reza slow} will combine the above with the following lemma, which will allow us to apply the ergodicity
criterion given in Theorem \ref{thm:integrability}.

\begin{lemma}
\label{lem:contribution}
Let $G \subset \Pi_d$ be a subgroup which acts transitively on $\{1,\ldots, d\}$
for some $d \geq 2$.
For each finite subset $J \subset \Z$ and for each $\eta>0$, there is a constant $c=c(\eta, J)>0$ so that 
for each $h \in \Hom(\Gamma,G)$ that satisfies the condition that the subgroup generated by 
$\{h(\gamma_j)~:~j \in J\}$ acts transitively on $\{1, \ldots, d\}$, there is a connected subsurface $\tilde U_h$ of the cover $(\tilde S_h, \tilde \alpha_h)$ associated to $h$, so that $\text{Area}(\tilde S_h \smallsetminus \tilde U_h)<\eta \text{Area}(\tilde S_h)$ and
$$\epsilon(t)^4 \sD_t^{-2} > c \quad \text{for each
$t \in \R$ with $|t| \leq \frac{\ln(2)}{2}$,}$$
where $\epsilon(t)$ represents a lower bound for the distance from points in $\tilde U_t$ to points in $\Sigma$
and $\sD_t$ is an upper bound for the diameter of $\tilde U_h$ both measured using the pullback metric $\dist_t$
as in Theorem \ref{thm:integrability}.
\end{lemma}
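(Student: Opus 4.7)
The plan is to produce a single connected subsurface $\tilde U$ and invoke Theorem \ref{thm:integrability} with $C_t \equiv 1$, for which the $\delta_t$ term vanishes and the integrand collapses to $\varepsilon(t)^4\,\sD_t^{-2}$, exactly the quantity the lemma asks us to bound below.

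First I would choose a finite set $K \subset \Z$ with $J \subset K$ so large that the total length of edges of the unit square with labels outside $K$ is less than $\eta/(4d)$; this is possible because the edge lengths $1/2^k$ are summable. Then I pick $\rho > 0$ small enough that, after removing from the closed unit square an open $\rho$-neighborhood of (i) the four corners, (ii) the endpoints of every labeled subinterval, and (iii) the edges labeled outside $K$, the remaining closed piece $U$ has area at least $1 - \eta/d$. For any $h$ meeting the hypothesis, let $\tilde U \subset \tilde S_h$ be the union of the $d$ lifts of $U$, with the partial edges labeled in $K$ identified according to the monodromy $h$. By construction $\mathrm{Area}(\tilde S_h \smallsetminus \tilde U) < \eta$, and $\tilde U$ lies at distance at least $\rho$ from the metric completion of $\tilde S_h$, since every completion point either projects into the $\rho$-neighborhood removed at the square level or onto an edge labeled outside $K$.

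The crucial step is a uniform diameter bound. Since $\{h(\gamma_j) : j \in J\}$ acts transitively on $\{1,\ldots,d\}$ and $J \subset K$, the Schreier graph of this action with edges labeled by $J$ is a connected graph on $d$ vertices, hence has combinatorial diameter at most $d-1$. Any two lifts of the base square can therefore be joined inside $\tilde U$ by a path crossing at most $d-1$ identified edges in $J$; each such crossing contributes a single square traversal of length at most $\sqrt{2}$, plus an $O(\rho)$ correction to route around the excised neighborhoods. Consequently the flat diameter of $\tilde U$ is bounded above by a constant $\sD_0$ depending only on $d$, uniformly in $h$, and the same argument shows $\tilde U$ is connected.

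For $|t| \leq \tfrac{1}{2}\ln 2$ one has $e^{2|t|} \leq 2$, so the real-linear map $g^t$ distorts planar distances by a factor in $[1/\sqrt{2},\sqrt{2}]$. I may therefore take $\sD_t := \sqrt{2}\,\sD_0$ as a diameter bound for $g^t(\tilde U)$ and $\varepsilon(t) := \rho/\sqrt{2}$ as a lower bound on its distance to the completion of $g^t(\tilde S_h, \tilde \alpha_h)$. Setting
\[
c \;:=\; \frac{(\rho/\sqrt{2})^4}{(\sqrt{2}\,\sD_0)^2} \;=\; \frac{\rho^4}{8\,\sD_0^2},
\]
which depends only on $\eta$ and $J$ (through $K$, $\rho$, and $\sD_0$, with $d$ fixed by $G$), yields $\varepsilon(t)^4\,\sD_t^{-2} \geq c$ throughout $|t| \leq \tfrac{1}{2}\ln 2$. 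The main technical obstacle is making the diameter bound genuinely independent of $h$: a priori, different $h$ could produce very different glueing combinatorics, but the Schreier graph argument sidesteps this by producing a bound depending only on $d$. A secondary subtlety is that the paths realizing the diameter must stay inside $\tilde U$ (avoiding the removed $\rho$-neighborhoods), which only perturbs path lengths by $O(\rho)$ and so does not affect the bound.
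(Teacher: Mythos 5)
Your proof is correct and mirrors the paper's overall strategy: construct a subsurface $U \subset (S,\alpha)$ depending only on $J$ and $\eta$, take its preimage $\tilde U$ in the cover, and use transitivity of the $J$-monodromy for connectivity. The implementations diverge in two places, though. The paper's $U$ is a concentric subsquare (of area $1 - \frac{\eta}{2d}$) with an explicit ``handle'' attached through each edge labeled by $J$, so that $\tilde U_h$ is a union of $d$ disks joined only along those handles; you instead take the whole square minus $\rho$-collars of the bad locus (corners, interval endpoints, and the short edges outside a large finite set $K \supset J$), which yields a more elaborate $\tilde U$. More substantively, the paper's uniform diameter bound comes from the observation that the geometry of $\tilde U_h$ depends only on the finite data $h|_J$, so there are only finitely many translation-equivalence classes and one may take extremal constants; you instead deduce the bound $O(d)$ directly from the Schreier graph of the $J$-action on $\{1,\dots,d\}$ having combinatorial diameter at most $d - 1$. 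Your Schreier-graph argument is a nice explicit replacement for that finiteness/compactness step and makes the $d$-dependence of $c$ transparent, at the cost of extra bookkeeping: you need $\rho$ small compared to the lengths of the $J$-edges so that the connecting paths can actually cross them, and you need to check that the $\rho$-collar is consistent across edge identifications so that it really bounds the distance to the wild singularity $\Sigma$ (not merely the distance to the bad locus measured inside the square); this last point goes through because the identifications are translations carrying bad points to bad points, but the paper is equally terse about it, simply citing positivity and continuity in $t$. Both proofs are sound.
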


We note that the expression $\epsilon(t)^4 \sD_t^{-2}$ is the function inside the integral in Theorem \ref{thm:integrability} in the special case of considering a single connected subsurface (i.e., $C_t \equiv 1$).

\begin{proof}
We will explicitly describe how to build $\tilde U_h \subset (\tilde S_h, \tilde \alpha_h)$. We will define
$U$ to be a subsurface of $(S_2,\alpha_2)$, which only depends on $J$ and $\eta$. Recall that $(S_2,\alpha_2)$ was built from a single unit area square with edge identifications. To build $U$, start with a smaller concentric square with area equal to $1-\eta$. Then for each $j \in J$, consider the edge of the surface labeled $j$. Attach to the concentric square a ``handle'' passing through the edge, which stays a bounded distance away from $\Sigma$ (the points added in the metric completion). See Figure \ref{fig:reza u} for an example. Then, we define $\tilde U_h$ to be the preimage of $U$
under the covering map $\tilde S_h \to S$. We note that $\tilde U_h$ is connected because of our condition that $\{h(\gamma_j)~:~j \in J\}$ acts transitively.

\begin{figure}
\includegraphics[scale=0.75]{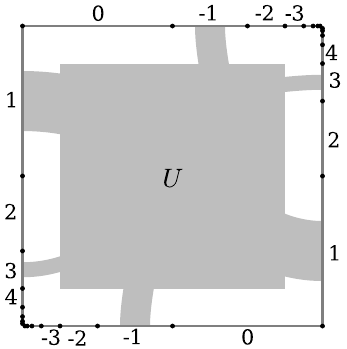}
\caption{A subsurface $U$ when $J=\{-1,1,3\}$.}
\label{fig:reza u}
\end{figure}

The $\dist_t$ distances $\epsilon(t)$ and ${\mathcal D}_t$ are the same as the corresponding distances measured
using $g^t(\tilde U)$ in $g^t(\tilde S_h, \tilde \alpha_h)$. Note that the minimal distance from the boundary of $g^t(\tilde U_h)$ to the metric completion in $g^t(\tilde S_h, \tilde \alpha_h)$ is the same as the minimal distance from $g^t(U)$ to the metric completion in $g^t(S_2,\alpha_2)$. This
distance is always positive and varies continuously in $t$, so we can get a uniform lower bound
which holds when $|t| \leq \frac{1}{2} \ln 2$. Similarly,
the diameter of $g^t(\tilde U_h)$ varies continuously in $t$, so we can get an upper bound on the diameter which is uniform in $t$ with $|t| \leq \frac{1}{2} \ln 2$. Finally observe that up to translation equivalence there are only finitely many $\tilde U_h$ as we vary $h$. (The geometry of $\tilde U_h$ only depends on the restriction of $h$ to $J$.)
Thus, we can get an upper bound on the diameter of which is uniform in both $h$ satisfying our condition
and $t$ satisfying $|t| \leq \frac{1}{2} \ln 2$.
\end{proof}

We need a mechanism to ensure that $h$
satisfies the condition of the lemma (i.e., for some fixed $J \subset \Z$ the subgroup generated by $\{ h(\gamma_j)~:~j \in J\}$ acts transitively on $\{1,\ldots, d\}$),
given conditions on the $G$-sequence $\gmap(h)$. This mechanism is a corollary of Lemma \ref{lem:G-sequence}.

\begin{corollary}
\label{cor:conversion}
Suppose $h$ has $G$-sequence $\g=\gmap(h)$. Let $m, k \in \Z$ with $m>0$. Then, if 
the subgroup generated by $\{\g_j~:~k-m\leq j \leq k+m\}$ acts transitively on $\{1,\ldots,d\}$, then so does
the subgroup generated by $\{h \circ \phi_\beta^{-k} (\gamma_j)~:~-m+1\leq j \leq m+1\}$.
\end{corollary}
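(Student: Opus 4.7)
The plan is to use Lemma \ref{lem:G-sequence} to show that the two subgroups
$$ \langle h \circ \phi_\delta^{-k}(\gamma_j) : -m+1 \leq j \leq m+1 \rangle \and \langle g_j : k-m \leq j \leq k+m \rangle$$
are in fact \emph{equal}. Once this equality is established, transitivity of the second subgroup on $\{1,\ldots, d\}$ immediately gives transitivity of the first, which is what we want.

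First I would show the inclusion from left to right. By Lemma \ref{lem:G-sequence}, each $h \circ \phi_\delta^{-k}(\gamma_j)$ with $-m+1 \leq j \leq m+1$ is a word (with inverses) in the letters $g_{k+j-1}, g_{k+j}, \ldots, g_{k-1}$ (in the case $j < 0$), or simply $g_{k-1}$ or $g_k$ (in the cases $j=0,1$), or $g_k, g_{k+1}, \ldots, g_{k+j-1}$ (in the case $j > 1$). Running $j$ from $-m+1$ to $m+1$, every letter appearing is of the form $g_i$ with $k-m \leq i \leq k+m$. So each generator of the left subgroup lies in the right subgroup.

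The reverse inclusion is the main (very mild) obstacle, and is handled by an inductive unraveling of the same formulas. From $j=0$ we read off $g_{k-1}$, and from $j=1$ we read off $g_k$. For $j=2$, the formula gives $g_k^{-1} g_{k+1}$, so $g_{k+1}$ lies in the left subgroup; proceeding by induction on $j = 2,3,\ldots, m+1$, each new formula contributes the single previously unseen letter $g_{k+j-1}$ after cancellation against elements already recovered. Symmetrically, for $j = -1,-2,\ldots, -m+1$, the formulas let us solve for $g_{k+j-1}$ in terms of elements already recovered, yielding $g_{k-2}, g_{k-3},\ldots, g_{k-m}$ in turn. Thus every $g_i$ with $k-m \leq i \leq k+m$ lies in the left subgroup.

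Combining the two inclusions gives the equality of subgroups, which completes the proof.
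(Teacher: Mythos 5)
Your proof is correct and takes essentially the same route as the paper: the paper's one-sentence argument invokes Lemma \ref{lem:G-sequence} to express each $g_j$ with $k-m \leq j \leq k+m$ in terms of the elements $h \circ \phi_\delta^{-k}(\gamma_j)$, $-m+1 \leq j \leq m+1$, which is exactly the inductive unraveling you carry out for the ``reverse'' inclusion. Your additional forward inclusion (upgrading the containment to an equality of subgroups) is a pleasant observation but is not needed for the corollary, since transitivity only requires that the second subgroup contain the first.
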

\begin{proof}
Using Lemma \ref{lem:G-sequence}, we can find an expression for each $\g_j$ with $k-m\leq j \leq k+m$ 
in terms of a $\{h \circ \phi_\beta^{-k} (\gamma_j)~:~-m+1\leq j \leq m+1\}$.
\end{proof}

\begin{proof}[Proof of Theorem \ref{thm:Reza slow}]
We will exhibit an ergodic cover $(\tilde S_h, \tilde \alpha_h)$ by verifying the conditions of Theorem \ref{thm:integrability}. As stated, this theorem involves verifying that for each $\eta>0$, a certain integral is infinite. However, it really only involves proving it for arbitrarily small $\eta$, since if the statement is true for $\eta>0$ then it is also true for any $\eta'>\eta$ with the same subsurfaces and geometric functions chosen.
Hence, it suffices to verify that this integral is infinite for each of $\eta_i>0$ in a sequence tending to zero.

The main tool to get infinite integrals is Lemma \ref{lem:contribution}. We will only apply this Lemma to sets $J$ of the form
$$J_m=\{n \in \Z~:~|n| \leq m \}.$$
Let 
$\Hom_m \subset \Hom(\Gamma,G)$ be the collection of all $h$ so that
the subgroup generated by $\{h(\gamma_n):~n \in J_m\}$ acts transitively on $\{1,\ldots, d\}$.
Then the conditions needed for applying Lemma \ref{lem:contribution} with $J=J_m$ can be succinctly stated as $h \in \Hom_m$. Let $c_{i,m}=c(\eta_i, J_m)$ be the constants given by Lemma \ref{lem:contribution}.
Observe that $D(\phi)=g^{\ln 2}$. This means that
$$g^{k \ln 2}(\tilde S_h, \tilde \alpha_h)=(\tilde S_{h \circ \phi^{-k}}, \tilde \alpha_{h \circ \phi^{-k}}).$$
So, assuming that $h \circ \phi^{-k} \in \Hom_m$, the contribution to the integral in Theorem \ref{thm:integrability}
taken with $\eta=\eta_i$ and $t \in \big[(k-\half) \ln 2, (k+\half) \ln 2\big]$ is at least $c_{i,m} \ln 2$. 

We need the total contribution to the integrals for each $\eta_i$ to be infinite. 
For each $i$, choose a sequence $N_{i,m}$ so that
$$\sum_m^\infty N_{i,m} c_{i,m} \ln 2=+\infty.$$
(The number $N_{i,m}$ representing a number of times we contribute $c_{i,m} \ln 2$ to the integral using the method of the previous paragraph.) Now define the sequence
\begin{equation}
\label{eq:integral as sum}
N_m = \max \{ N_{1,m}, N_{2,m}, \ldots, N_{m,m}\}
\quad \text{and observe}\quad 
\sum_m N_{m} c_{i,m} \ln 2=+\infty \quad \text{for all $i$.}
\end{equation}
Observe that we have the following criterion for ergodicity:

{\bf Claim.} Fix $h$. If there is a sequence of pairwise disjoint subsets $K_m \subset \N$ defined for $m \geq m_0$
for some $m_0>0$ so that $|K_m| \geq N_m$ and 
$h \circ \phi^{-k} \in \Hom_m$ for each $k \in K_m$, then the translation flow on $(\tilde S_h, \tilde \alpha_h)$ is ergodic.

The proof of the claim is that by using Lemma \ref{lem:contribution} with $J=J_m$ on $h \circ \phi^{-k}$ at times
$t \in \big[(k-\frac{1}{2}) \ln 2, (k+\frac{1}{2}) \ln 2\big]$ where $k \in K_m$ we get a contribution of $c_{i,m} \ln 2$ to the integral associated to $\eta_i$. These contributions occur at least $N_m$ times when $m \geq m_0$,
so each of the integrals is infinite by \eqref{eq:integral as sum}.

Now we will explain how to meet the hypotheses of this claim. First we prefer to work with the $G$-sequence
$\g=\gmap(h)$ than with $h$ directly. By Corollary \ref{cor:conversion}, we can guarantee that $h \circ \phi^{-k} \in \Hom_{m}$ if $\sigma^k(\g) \in \mathit{Gen(m)}$ where 
$\mathit{Gen(m)}$ denotes the set of $\g' \in G^\Z$ so that the subgroup 
generated by $\{\g'_j:~|j|<m\}$ acts transitively on $\{1,\ldots, d\}$. Hence we can replace
the condition $h \circ \phi^{-k} \in \Hom_m$ in our Claim with the condition that $\sigma^k(\g) \in \mathit{Gen(m)}$.

We will now produce a $\g$ together with subsets $K_m$ as above so that $\sigma^k(\g) \in \mathit{Gen(m)}$ when $k \in K_m$. To do this we only need to specify a tail for $\g$; i.e., the values for $\g_k$ for $k$ large.
Say that a {\em word} is an element $w \in G^l$ for some $l \in \N$. We use $|w|$ to denote the length of $w$.
We will express this tail for $\g$ as an infinite concatenation of words. For $i \in \{1,2\}$ choose words
$w_i$ so that the group elements appearing in the words generate the subgroup $H_i$ as in the statement of the Theorem. Let $e$ denote the identity element of $G$, and let $e^j$ denote word formed by repeating the identity $j$ times. Observe that if $\ell+|w_1|+|w_2|=2m-1$ and the word 
\begin{equation}
\label{eq:word form}
\g'_{-m+1} \g'_{-m+2} \ldots \g'_{m-1} \quad \text{equals} \quad w_1 e^\ell w_2 \quad \text{or} \quad w_2 e^\ell w_1
\end{equation}
then $\g' \in \mathit{Gen(m)}$ since by hypothesis the group generated by $H_1 \cup H_2$ acts transitively on $\{1,\ldots, d\}$. Let $m_0$ be such that $2 m_0-1 \geq 1+|w_1|+|w_2|$. Let $(m_j)$ be any sequence in the set $\{m \in \N:~m \geq m_0\}$ so that each $m \geq m_0$ appears $N_m$ times in the sequence. 
For each $j$ define $\ell_j$ so that $\ell_j+|w_1|+|w_2|=2m_j-1$.
Now assume $\g$ has a tail of the form 
\begin{equation}
\label{eq:tail}
w_1 e^{\ell_1} w_2 e^{\ell_2} w_1 e^{\ell_3} w_2 e^{\ell_4} w_1 \ldots.
\end{equation}
Then for each $j$ there is a $k_j$ so that when $\g'=\sigma^{k_j}(\g)$ the word of \eqref{eq:word form}
taken with $m=m_j$ has the form of either $w_1 e^{\ell_j} w_2$ or $w_2 e^{\ell_j} w_1$ depending on the parity of $j$. In particular, $\sigma^{k_j}(\g) \in \mathit{Gen(m_j)}$ for each $j$. By construction, $m$ appears $N_m$ times in the sequence $(m_j)$, so by application of the Claim we see that if $\gmap(h)=\g$ then 
the translation flow on $(\tilde S_h, \tilde \alpha_h)$ is ergodic.

Recall that we can improve ergodicity to unique ergodicity using Corollary \ref{cor:lifting} since the $2$-adic odometer is uniquely ergodic.

It remains to explain that the orbit of $(\tilde S_h, \tilde \alpha_h)$ under $\phi$ only accumulates on disconnected covers as stated in the Theorem. Observe that the $\g$ we constructed is $p$-ready, where $p:\{n \in \Z:~n \geq a\} \to \{0,1,2\}$ is defined so that $a$ represents the position of the first letter in the tail \eqref{eq:tail},
and $p(a+n)$ is determined by the $n+1$-st position in this tail: If the $n+1$-st position belongs to an $e^{\ell_\ast}$ word we define $p(a+n)=0$ and if it belongs to a $w_i$ word we define $p(a+n)=i$. Since each $m$
appears only finitely many times in the sequence $m_j$, we see that the length of the intervals $I(p,n)$ where $p(n)=0$ tends to infinity. Thus as an application of Proposition \ref{prop:disconnected}, $(\tilde S_h, \tilde \alpha_h)$ only accumulates on disconnected covers as desired.
\end{proof}

\subsection{The ladder surface}
\label{sect:ladder surface} 
For this subsection, we let $(S_L,\alpha_L)$ denote the infinite genus translation surface of Figure \ref{fig:ladder_surface}, which we call the {\em ladder surface}. It can be constructed from a region in the plane bounded by countably many horizontal and vertical edges. We have labeled the edges using the set $\Z^\ast=\Z \smallsetminus \{0\}$. Edges with the same label are glued by translation to form the surface, which does not include endpoints of these edges or the limiting point in the top right of the region. Let $\varphi$ denote the golden ratio, $\frac{\sqrt{5}+1}{2}$. An edge with label $j \in \Z^\ast$ has length given by 
$\varphi^{-|j|}.$
This information specifies the geometry of the surface. We have also selected a basepoint $s_0$ for our surface in the figure.

The surface is built using Thurston's construction \cite[\S 6]{T88} from the graph $\sG$ depicted in Figure \ref{fig:ladder_surface}. We will briefly review this construction. The surface $(S_L, \alpha_L)$ has horizontal and vertical cylinder decompositions with cylinders bounded by dotted lines in Figure \ref{fig:ladder_surface}.
From these cylinders one can build a bipartite graph: the white vertices consist of the horizontal cylinders, the gray vertices consist of the vertical cylinders and an edge is drawn between two vertices for each intersection between the corresponding two cylinders. 
The vertices of $\sG$ have been labeled in the figure to match the label of the edge passed through by the corresponding cylinder.
(Two adjacent cylinders pass through edges labeled $-1$ and $1$.)
For such a connected graph, for any $A>0$ there is at most one way to assign widths to the cylinders so that the area of the resulting surface is $A$ and the moduli (ratio of widths to circumferences) of all these cylinders are equal: this assignment viewed as a real valued function defined on the vertices of the graph must be an eigenfunction for the adjacency operator. (This eigenfunction always exists for finite graphs by the Perron-Frobenius theorem, while uniqueness is guaranteed in the infinite case by an analogous result \cite[Theorem 2.2]{V68}.) 
This forces the lengths of edges of the region defining our surface to be as mentioned above (up to uniform scaling). 

\begin{figure}[t]
\includegraphics[width=3in]{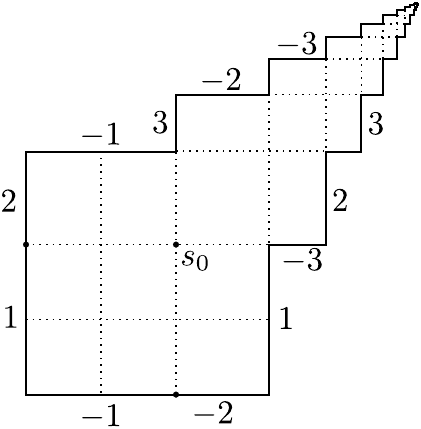}
\includegraphics[width=3in]{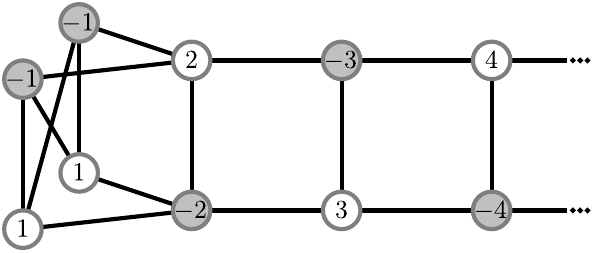}
\caption{{\em Left:} The ladder surface $(S_L,\alpha_L)$. {\em Right:} The graph used in Thurston's construction to build the ladder surface.}
\label{fig:ladder_surface}
\end{figure}

Once the moduli of the cylinders are all equal, an observation of Thurston
guarantees existence of two affine automorphisms of $(S_L, \alpha_L)$ with parabolic derivative and horizontal and vertical eigenvectors respectively preserving the horizontal and vertical cylinders in the decompositions. In the case of our surface, there is an additional reflective symmetry in a line of slope $1$. To generate these three affine automorphisms it is sufficient to use $\psi$ which performs a single left Dehn twist in each of the countably many vertical cylinders, and the reflective symmetry $\rho$. The derivatives are given by
$$D(\psi)=\left[\begin{array}{rr}
1 & 0 \\
2 \varphi & 1
\end{array}\right] \and
D(\rho)=\left[\begin{array}{rr}
0 & 1 \\
1 & 0
\end{array}\right].
$$
The composition $\rho \circ \psi \circ \rho$ performs a right Dehn twist in each of the horizontal cylinders. All these affine automorphisms fix the basepoint $s_0$
depicted in Figure \ref{fig:ladder_surface}.
We define $\Aff'$ to be the subgroup of $\Aff(S_L,\alpha_L)$ generated by $\psi$ and $\rho$. The group $\Aff'$ is isomorphic to the free product $\Z \ast (\Z/2\Z)$ and contains no translation automorphisms.

\begin{remark}
\label{rem:unknown affine group}
It is unknown to the authors if $\Aff'=\Aff(S_L,\alpha_L)$.
\end{remark}

Recall that $\sO(S,\alpha)$ is naturally identified with $\SL_\pm(2,\R)/V(S,\alpha)$. See the discussion below \eqref{eq:sO}.
Let $V'=D(\Aff')$. Since $\Aff'$ contains no translation automorphisms, the derivative map $D:\Aff' \to V'$ is a group isomorphism.
Unfortunately we do not know if $V'=V(S,\alpha)$, so all we get is a covering map:
\begin{equation}
\label{eq:covering O 1}
\SL_\pm(2,\R)/V' \to \sO(S,\alpha).
\end{equation}
Fortunately, if a geodesic recurs in $\SL_\pm(2,\R)/V'$, then its image in $\sO(S,\alpha)$ also recurs.
So non-divergence of $g^t A V'$ implies ergodicity of the translation flow on $A(S,\alpha)$ by Corollary \ref{cor:rodrigo ergodicity from Veech group}.
We will be working throughout this section without knowing precise information about the Veech group and this will create 
difficulties which will be dealt with. It turns out we will be able to get by only knowing the following:

\begin{proposition}
\label{prop:discrete}
The Veech group $V(S_L,\alpha_L)$ is discrete.
\end{proposition}
\begin{proof}
Consider the maximal vertical cylinder $C_{-2}$ in $S_L$ which passes through the edge labeled $-2$ in Figure \ref{fig:ladder_surface}. We chose this because it has the longest circumference of all vertical cylinders.
Suppose $A \in \SL(2,\R)$ is close enough to the identity so that a closed vertical segment of length equal to the circumference of $C_{-2}$ immerses inside of $A(C_{-2})$. Now suppose that $A \in V(S_L,\alpha_L)$. Then there is a translation isomorphism $e:A(S_L,\alpha_L) \to (S_L,\alpha_L)$.
The restriction $e|_{\ell}$ must not given an embedding of $\ell$ because all vertical trajectories on $(S_L,\alpha_L)$ close up at or before this length. Since $e|_{C_{-2}}$ is an embedding we see that $\ell$ must close up on $A(C_{-2})$. Thus $A(C_{-2})$ must be a vertical cylinder. 
Because $C_{-2}$ has singularities in its boundary so must $A(C_{-2})$, so that $A(C_{-2})$ is a maximal cylinder.
Area considerations show that $A(C_{-2})=C_{-2}$. We conclude that $A$ has a vertical eigenvector with eigenvalue $1$. A symmetric argument implies that
if $A$ is taken from a small enough open neighborhood of the identity then
$A$ has a horizontal eigenvector with eigenvalue $1$. By choosing our neighborhood small enough, both arguments hold and we see that the only element of $V(S_L,\alpha_L)$ in this neighborhood is the identity.
\end{proof}

We will need to understand $\SL_\pm(2,\R)/V'$ for later arguments.
The group $\SL_\pm(2,\R)$ can be identified with the flag bundle of the hyperbolic plane (where a flag is a choice of a unit tangent vector
and an orthogonal unit tangent vector taken from the same tangent plane). Here the hyperbolic plane is $O(2) \bs \SL_\pm(2,\R)$.
The group $\SL_\pm(2,\R)$ then acts on the right by isometry. The unit speed geodesics have the form $t \mapsto O(2) g^t A$
for some $A \in \SL_\pm(2,\R)$. 
The boundary of the hyperbolic plane can be identified with $\R \P^1=\R^2/(\R \smallsetminus \{0\})$.
The geodesic $t \mapsto O(2) g^t A$ converges as $t \to +\infty$ to the boundary point representing the vectors which are contracted
to zero as $t \to \infty$, the projective equivalence class of
$$\left(\begin{array}{r} x \\ y \end{array}\right)=A^{-1}\left(\begin{array}{r} 1 \\ 0 \end{array}\right).$$
We identify the hyperbolic plane $O(2) \bs \SL_\pm(2,\R)$ with the upper half plane in such a way so that
the projective class of $(x,y)$ viewed as a boundary point is identified with the slope $y/x \in \R \cup \{\infty\}$.

\begin{figure}
\includegraphics[width=3in]{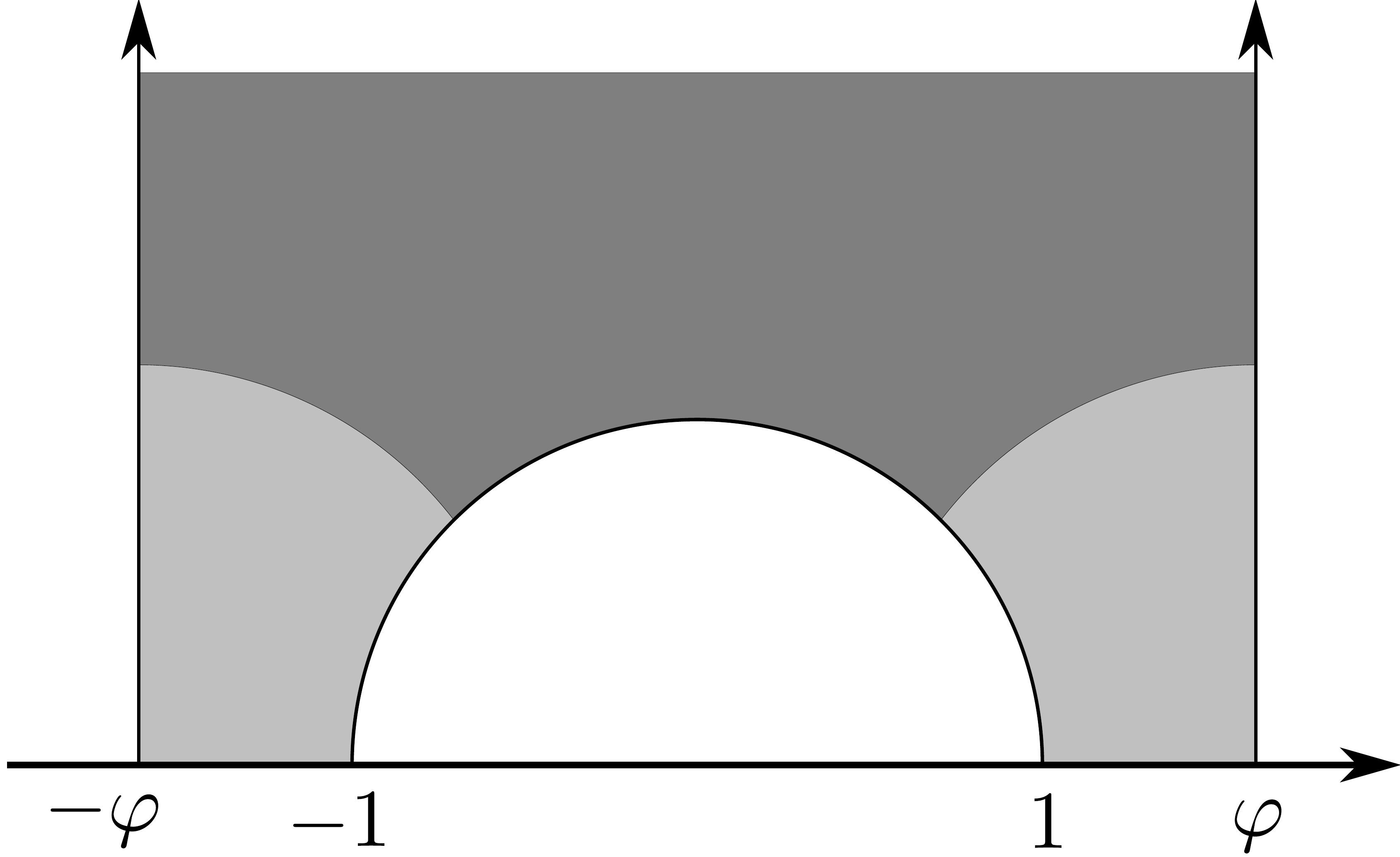}
\caption{A fundamental domain for the action of $V'$ on the upper half plane is shown in gray (it extends vertically to $\infty$).
The geodesic between $-1$ and $1$ is preserved by the reflection $\rho$, and
$\psi$ acts by translation by $2 \varphi$. The convex core is in dark gray and extends upward.}
\label{fig:fundamental domain}
\end{figure}

We let $V'$ denote the group of derivatives of $\Aff'$. 
The group $V'$ acts discretely on the hyperbolic plane, and Figure \ref{fig:fundamental domain} shows a fundamental domain for the action.
A geodesic ray $t \mapsto O(2) g^t A V'$ in $\SL_\pm(2,\R)/V'$ for $t>0$
is non-divergent if and only if the endpoint of the geodesic in the hyperbolic plane lies
in the horospherical limit set, which in this case is the limit set of $V'$ with fixed points of parabolics removed
\cite[Theorem 2]{BM74}. The limit set in this case is a Cantor set of Hausdorff dimension larger than $\frac{1}{2}$
\cite[Remark 4.1]{Hinf}. The {\em convex core} is the convex hull of the limit set in the hyperbolic plane projected into the surface $O(2) \bs \SL_\pm(2,\R)/V'$.
A geodesic ray in this surface is non-divergent if and only if it has an $\omega$-limit point in the convex core.
The convex core of this surface is depicted in Figure \ref{fig:fundamental domain}.

\begin{remark}[Boundary of convex core]
The geodesic in the surface $O(2) \bs \SL_\pm(2,\R) / V'$ with monodromy given by the derivative of the commutator $D([\rho, \psi])$ is the boundary of the convex core of this surface. 
\end{remark}

\begin{theorem}
\label{thm:prior work}
Let $A \in \SL_\pm(2,\R)$, and $(S_L,\alpha_L)$ be the ladder surface. Consider the geodesic ray
$t \mapsto O(2) g^t A V'$ in the surface $O(2) \bs \SL_\pm(2,\R) / V'$. 
\begin{itemize}
\item If the ray has an $\omega$-limit point (which must be in the convex core) then 
the translation flow on $A (S_L,\alpha_L)$ is defined for all time almost everywhere and is ergodic.
\item If the ray has an $\omega$-limit point in the interior of the convex core then 
the translation flow on $A(S_L,\alpha_L)$ is uniquely ergodic.
\end{itemize}
\end{theorem}
\begin{proof}
To see the first statement observe that if the geodesic has an $\omega$-limit point then so does the trajectory 
$t \mapsto g^t A (S_L,\alpha_L)$ in $\sO(S_L,\alpha_L)$ because of the covering map \eqref{eq:covering O 1}. The conclusion is then given by 
Corollary \ref{cor:rodrigo ergodicity from Veech group}.

The second statement is an application of Theorem H.5 of \cite{Hinf}. To use the theorem we need to verify several hypotheses.
First $(S_L,\alpha_L)$ arises from Thurston's construction using a graph with no vertices of valence one
as with the graph here; see Figure \ref{fig:ladder_surface}. Second in the language of that article we need the trajectory
$g^t A$ not be asymptotic to an endpoint of a maximal interval of the compliment of the limit set of the subgroup of $V(S_L,\alpha_L)$ generated by the horizontal and vertical parabolics. The subgroup generated by these parabolics is the same as $V'$ up to finite index,
so the two limit sets are identical. This condition that the trajectory is not asymptotic to an endpoint of a complimentary interval 
is equivalent to $O(2) g^t A V'$ not being asymptotic to the boundary of the convex core in $O(2) \bs \SL_\pm(2,\R) / V'$.
It is impossible for a geodesic ray with an $\omega$-limit point in the interior of the convex core to be asymptotic to the convex core boundary.
\end{proof}

We will be considering double covers of the ladder surface.
For $i \in \Z^\ast$, we let $\gamma_i$ be one of the two homotopy class of loops which start at the basepoint, cross only the edge labeled $i$, and return to the basepoint. These two homotopy classes are inverses in $\Gamma=\pi_1(S,s_0)$. We make the choice of $\gamma_i$ so $\gamma_i$ moves rightward across the vertical edge labeled $i$ if $i>0$,
and moves upward over the horizontal edge labeled $i$ if $i<0$. These curves freely generate the fundamental group,
which we denote by $\Gamma=\pi_1(S,s_0)$. (As with the the Chamanara surface in the previous section, the curves $\gamma_i$ can be chosen so that $S_L$ retracts onto the union of the curves which is homeomorphic to a countable bouquet of circles.)

Let $\Z_2=\Z/2\Z$. Observe that $\Z_2$ acts trivially on $\Hom(\Gamma, \Z_2)$ by conjugation so that
$\Covt_{\Z_2}(S_L)=\Hom(\Gamma, \Z_2)$; see \eqref{eq:space of covers}. It can be seen from Remark \ref{rem:primitivity}
that $\Cov_{\Z_2}(S_L,\alpha_L)=\Covt_{\Z_2}(S_L)$, i.e., \eqref{eq:top cover to cover} is a bijection in this case.
So in summary,
\begin{equation}
\label{eq:covers equals hom}
\Cov_{\Z_2}(S_L,\alpha_L)=\Hom(\Gamma, \Z_2).
\end{equation}

Since our basepoint is fixed by the affine automorphisms in $\Aff'$, we have a canonical action of each
element in $\Aff'$ on the space of covers. For example, if $h \in \Hom(\Gamma, \Z_2)$, then 
$\psi_\ast(h)=h \circ \psi^{-1}$ and $\rho_\ast(h)=h \circ \rho^{-1}$. We will work out this action below.

\begin{notation}
Let $h \in \Hom(\Gamma, \Z_2)$. 
We will write $h(i)$ to abbreviate $h(\gamma_i)$ for $i \in \Z^\ast$. 
(We have a bijection between elements of $\Hom(\Gamma, \Z_2)$ 
and the set of all functions $\Z^\ast \to \Z_2$.)
\end{notation}
It can be observed that the generators of
$\Aff'$ act on $\Hom(\Gamma, \Z_2)$ as follows:
\begin{equation}
\label{eq:psi ladder}
\big(\psi_\ast(h)\big)(i)=\begin{cases}
h(i) & \text{if $i <0$,} \\
h(i)+h({-2}) & \text{if $i=1$,} \\
h(i)+h({-2})+h({-3}) & \text{if $i=2$,} \\
h(i)+h({-i-1})+h({-i})+h({-i+1}) & \text{if $i>2$.}
\end{cases}
\end{equation}
\begin{equation}
\label{eq:rho ladder}
\big(\rho_\ast(h)\big)(i)=h(-i).
\end{equation}

A key observation is the following:
\begin{proposition}
\label{prop:psi squared trivial}
The square $\psi^2_\ast$ acts trivially on 
$\Hom(\Gamma, \Z_2)$.
\end{proposition}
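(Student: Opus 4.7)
The plan is to give a direct computational proof using the explicit formula in equation (\ref{eq:psi ladder}). Let $h \in \Hom(\Gamma, \Z_2)$ and set $h' = \psi_\ast(h)$; I need to verify that $\psi_\ast(h')(i) = h(i)$ for every $i \in \Z^\ast$. The decisive observation, which is what makes everything work mod $2$, is that the formula for $\psi_\ast$ leaves the values $h(i)$ with $i < 0$ completely unchanged. Consequently, the ``correction'' terms that get added to $h(i)$ for $i > 0$ involve only values at negative indices, and those correction terms reappear verbatim when we apply $\psi_\ast$ a second time.

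More concretely, I would treat the four cases separately. For $i < 0$, there is nothing to do: $h'(i) = h(i)$ and hence $\psi_\ast(h')(i) = h(i)$. For $i = 1$, the formula gives $\psi_\ast(h')(1) = h'(1) + h'(-2)$; substituting $h'(1) = h(1) + h(-2)$ and $h'(-2) = h(-2)$ yields $h(1) + 2h(-2) = h(1)$ in $\Z_2$. The case $i = 2$ is analogous: $\psi_\ast(h')(2) = h'(2) + h'(-2) + h'(-3) = h(2) + 2h(-2) + 2h(-3) = h(2)$. For $i > 2$, all three indices $-i-1, -i, -i+1$ are strictly negative (since $i \geq 3$), so $h'$ agrees with $h$ on each of them, and expanding gives
\[
\psi_\ast(h')(i) = h(i) + 2h(-i-1) + 2h(-i) + 2h(-i+1) = h(i).
\]

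Since $\psi_\ast^2(h)$ agrees with $h$ on the free generators $\{\gamma_i\}_{i \in \Z^\ast}$ of $\Gamma$, and both are homomorphisms to $\Z_2$, they are equal. I do not anticipate any obstacle: this is a routine verification once one notices that the relevant correction terms are supported on negative indices, where $\psi_\ast$ acts as the identity.
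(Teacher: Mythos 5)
Your proof is correct and follows essentially the same line of reasoning as the paper's: both hinge on the observation that $\psi_\ast$ fixes the values $h(i)$ for $i<0$, so the correction terms appearing in the formula for $i>0$ (which are supported on negative indices) are reproduced verbatim by a second application of $\psi_\ast$ and hence cancel mod $2$. The paper phrases this in one sentence (though it appears to say ``positive integers'' where ``negative integers'' is intended), while you spell out the four cases explicitly; the content is the same.
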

\begin{proof}
The action of $\psi^2_\ast$ preserves the value of $h(i)$ for $i<0$, and adds a sum of values of $h$ evaluated
at negative integers to the values of $h(i)$ for $i>0$.
Since adding the same number twice is the same as adding zero
in $\Z_2$, $\psi_\ast^2$ acts trivially.
\end{proof}

We have the following trivial consequence:

\begin{corollary}
\label{cor:veech group of cover}
The subgroup of $V'$ given by
\begin{equation}
\label{eq:tilde V'}
\tilde V'=\langle R\, D(\psi^2) R^{-1} ~|~R \in V' \rangle
\end{equation}
is a subgroup of the Veech group of any $(\tilde S_h, \tilde \alpha_h) \in \Cov_{\Z_2}(S_L,\alpha_L)$.
\end{corollary}

From this we get a covering map (similar to \eqref{eq:covering O 1}):
\begin{equation}
\label{eq:covering O 2}
\SL_\pm(2,\R)/ \tilde V'  \to \sO(\tilde S_h, \tilde \alpha_h).
\end{equation}
Observe that a non-divergent geodesic in $\SL_\pm(2,\R)/ \tilde V'$ will descend to a non-divergent geodesic in $\sO(\tilde S_h, \tilde \alpha_h).$ We have recovered statement (L1) of the introduction:

\begin{proposition}
\label{prop:non-divergence covers}
Suppose that $g^t A \tilde V'$ is non-divergent in $\SL_\pm(2,\R)/ \tilde V'$. Then 
for any connected $(\tilde S_h, \tilde \alpha_h) \in \Cov_{\Z_2}(S_L,\alpha_L)$,
the double cover $A(\tilde S_h, \tilde \alpha_h)$ of $A(S_L,\alpha_L)$ is not devious
and every $A(\tilde S_h, \tilde \alpha_h)$ has uniquely ergodic translation flow.
\end{proposition}

\begin{proof}
Fix $A$ satisfying the first statement and any cover $(\tilde S_h, \tilde \alpha_h)$.
From the covering map \eqref{eq:covering O 2}, we know $g^t A(\tilde S_h, \tilde \alpha_h)$ is non-divergent
in $\sO(\tilde S_h, \tilde \alpha_h)$. Thus this cover is not devious and ergodicity follows from Corollary \ref{cor:rodrigo ergodicity from Veech group}.
To prove unique ergodicity, it suffices in light of Corollary \ref{cor:lifting} to show that the translation flow on the surface $A(S_L,\alpha_L)$ is uniquely ergodic. This holds unless $O(2) g^t A V'$ is forward asymptotic to the convex core boundary of $O(2)\bs \SL_\pm(2,\R) /V'$ by Theorem \ref{thm:prior work}. But this convex core boundary lifts to a pair of divergent geodesics in $O(2)\bs \SL_\pm(2,\R) /\tilde V'$. (This will be justified more below; the wavy lines between light and dark gray regions in Figure \ref{fig:periodic disk} denote the lifted boundary of the convex core.)
\end{proof}

Our main goal of the section is to find some devious covers, so from the above proposition we will need to consider
elements $A \in \SL_\pm(2,\R)$ so that $g^t A \tilde V'$ is divergent in $\SL_\pm(2,\R)/ \tilde V'$.
We will show that the existence of such covers is related to a combinatorial rate of divergence of this trajectory. To measure this we need to further develop our understanding of $\tilde V'$ and the surface $O(2) \bs \SL_\pm(2,\R)/ \tilde V'$.

From \eqref{eq:tilde V'}, the group $\tilde V'$ is evidently normal in $V'$. Let $\Delta=V' / \tilde V'$ be the quotient. This group has a right action as the Deck group of the covering maps
\begin{equation}
\label{eq:cover}
\SL_\pm(2,\R)/ \tilde V' \to \SL_\pm(2,\R)/ V'
\quad \text{and} \quad
O(2) \bs \SL_\pm(2,\R)/ \tilde V' \to O(2) \bs \SL_\pm(2,\R)/ V'.
\end{equation}
Since as a group $V'=\langle D(\psi), D(\rho)\rangle$ is isomorphic to the free product $\Z \ast \Z_2$, and 
$\psi^2$ is in $\tilde V'$, the quotient $\Delta$ is isomorphic to the infinite dihedral group which we think of as $\Isom(\Z)$, where $\Z$ is given the standard metric. A homomorphism $\delta:V' \to \Isom(\Z)$ which factors through $\Delta$ is given by
\begin{equation}
\label{eq:delta}\delta \circ D(\psi):n \mapsto -n \and \delta \circ D(\rho):n \mapsto 1-n.
\end{equation}
The action of the Veech group on $\Cov_{\Z_2}(S_L, \alpha_L)=\Hom(\Gamma, \Z_2)$ induces an action of $\Isom(\Z)$, where
\begin{equation}
\label{eq:ast action}
\big(\delta \circ D(\psi)\big)_\ast=\psi_\ast \and \big(\delta \circ D(\rho)\big)_\ast=\rho_\ast.
\end{equation}

We will now give a cartoon description of the surface $O(2) \bs \SL_\pm(2,\R)/ \tilde V'$.
We can deform our fundamental domain for the $V'$ action so that it looks like the left side of Figure \ref{fig:periodic disk}. The advantage here is that two copies of this domain can be joined together by a Euclidean rotation by $180^\circ$ about the point $\infty$. This rotation is then representing the order two action on $\SL_\pm(2,\R)/ \tilde V'$ given by the right action of $D(\psi)$. The surface $O(2) \bs \SL_\pm(2,\R)/ \tilde V'$ can be tiled by copies of this fundamental domain as shown on the right side of the figure. We have drawn this in such a way so that the action of the Deck group $\Isom(\Z)$ appears natural. In particular, for each $n \in \Z$, we can join together two copies of the fundamental domain together about the point labeled $\infty_n$. We call this region $F_n$; these regions are depicted in the figure.
The isometry 
$\delta \circ D(\psi):n \mapsto -n$ acts by a rotation by $180^\circ$ about the point $\infty_0$.
The isometry $\delta \circ D(\rho):n \mapsto 1-n$ acts by a Euclidean reflection in the line forming the boundary between $F_0$ and $F_1$. We observe that $\Isom(\Z)$ acts in a natural way on these regions as labeled by $\Z$. Namely, we can think of these regions as subsets of $O(2) \bs \SL_\pm(2,\R) / \tilde V'$ and 
\begin{equation}
\label{eq:action on regions}
F_n R^{-1} = F_{\delta(R)(n)} \quad \text{for all $R \in V'$ and $n \in \Z$}.
\end{equation}
The actual action is somewhat subtle: the translation 
\begin{equation}
\label{eq:translation action}
\tau^m=\delta \circ D\big((\rho \circ \psi)^m\big):n \mapsto n+m
\end{equation} 
acts by translation by $m$ in the figure
when $m$ is even, but when $m$ is odd $\tau^m$ acts as a glide reflection carrying each $F_n$ to $F_{n+m}$.

\begin{figure}
\includegraphics[width=6in]{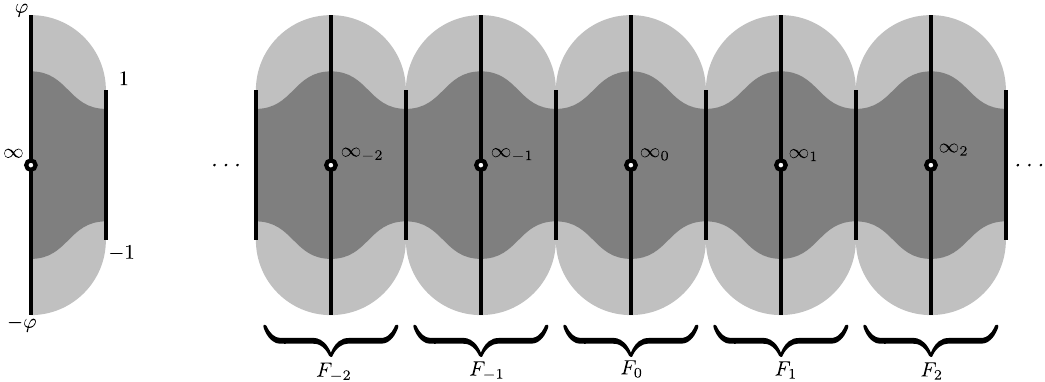}
\caption{{\em Left:} A caricature of the fundamental domain shown in Figure \ref{fig:fundamental domain}. Geodesic boundaries are drawn in black. {\em Right:}
The quotient surface $O(2) \bs \SL_\pm(2,\R)/ \tilde V'$ tiled by copies of this fundamental domain.}
\label{fig:periodic disk}
\end{figure}

We use the regions $F_n$ in $\SL_\pm(2,\R)/ \tilde V'$ to code $g^t$-trajectories in $\SL_\pm(2,\R)/ \tilde V'$ arising from non-divergent trajectories in $\SL_\pm(2,\R)/ V'$:
\begin{proposition}[Coding walk]
\label{prop:walk}
Let $A \in \SL(2,\R)$. 
If the trajectory $g^t A V'$ is non-divergent in forward time on $\SL_\pm(2,\R)/ V'$, then there is a sequence of countably many times
$$0=t_0 < t_1<t_2< \ldots$$
with $\lim_{k \to \infty} t_k=+\infty$ and a sequence of integers $\{n_k~:~k=0,1,2, \ldots\}$
so that 
$$O(2) g^t A \tilde V' \in F_{n_k} \quad \text{whenever $t_k<t<t_{k+1}$}.$$ 
Furthermore, $n_{k+1} \in \{n_k +1, n_k-1\}$ for each $k \geq 0$. 
\end{proposition}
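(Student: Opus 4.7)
I plan to follow the geodesic $t\mapsto g^t A\tilde V'$ through the tiling $\{F_n\}_{n\in\Z}$ of $O(2)\bs\SL_\pm(2,\R)/\tilde V'$ and record its crossings of the seams between adjacent tiles. The first step is purely combinatorial: each $F_n$ is the union of two copies of the $V'$-fundamental domain of Figure \ref{fig:fundamental domain} glued along the cusp $\infty_n$, so its frontier in $O(2)\bs\SL_\pm(2,\R)/\tilde V'$ consists of the cusp $\infty_n$ together with exactly two complete geodesic arcs $L_{n-1}$ and $L_n$ (the images under $\H^2\to\H^2/\tilde V'$ of the ``bottom'' edges from $-1$ to $1$ of the constituent $V'$-fundamental domains), shared respectively with $F_{n-1}$ and $F_{n+1}$. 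For $|n-m|>1$ the closures $\overline{F_n}$ and $\overline{F_m}$ are disjoint. This is the combinatorial content of the right-hand side of Figure \ref{fig:periodic disk}.

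I would then build the sequences inductively. Set $t_0=0$ and choose $n_0$ with $A\tilde V'\in F_{n_0}$, picking either admissible label if $A\tilde V'$ sits on a seam. Given $(t_k,n_k)$, define
\[
  t_{k+1} \;=\; \inf\bigl\{t>t_k : g^t A\tilde V'\notin F_{n_k}^\circ\bigr\}.
\]
The required adjacency $n_{k+1}=n_k\pm 1$ then follows from two facts. First, a hyperbolic geodesic parameterized by arc length never reaches a cusp in finite time, so at $t=t_{k+1}<\infty$ the point $g^{t_{k+1}}A\tilde V'$ cannot be $\infty_{n_k}$ and must therefore lie on one of the seams $L_{n_k-1}$ or $L_{n_k}$, which determines which adjacent tile is entered. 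Second, non-divergence of $g^t A V'$ in the base orbifold rules out forward accumulation of $g^t A\tilde V'$ at any $\infty_n$, since every such cusp projects to the unique cusp of $O(2)\bs\SL_\pm(2,\R)/V'$ and such accumulation would force $g^t A V'$ to diverge downstairs.

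The main obstacle is showing that $\lim_{k\to\infty}t_k=+\infty$. Accumulation of the $t_k$ at a finite time $T$ is ruled out by compactness: the arc $\{g^tA\tilde V' : 0\leq t\leq T\}$ is a compact subset of $O(2)\bs\SL_\pm(2,\R)/\tilde V'$ disjoint from every cusp, and the tiling $\{F_n\}$ is locally finite away from cusps, so only finitely many seam crossings can occur on $[0,T]$. Termination (the geodesic eventually confined to a single $F_{n_N}$) requires a finer analysis: the forward endpoint of the $\H^2$-lift of the geodesic would then have to lie in the ideal boundary of a single lift of $F_{n_N}$, which consists of funnel intervals together with a discrete set of points (the cusp $\infty_{n_N}$ together with the $V'$-translates of the endpoints of the bottom geodesics). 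Non-divergence excludes the cusp and the funnel intervals (the latter correspond to escape into a funnel of $O(2)\bs\SL_\pm(2,\R)/V'$), and the remaining discrete points are not in the horocyclic limit set of $V'_+$, so the geodesic must eventually leave $F_{n_N}$. Hence the sequence is infinite with $t_k\to+\infty$.
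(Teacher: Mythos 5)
Your proof is correct and follows essentially the same approach as the paper's: argue by contradiction that a geodesic eventually confined to one tile $F_n$ must have an ideal endpoint excluded by non-divergence. The paper's own proof is much terser (one sentence noting that geodesics staying forever in the annulus $F_n$ either exit the cusp or limit to a boundary point), so your careful accounting of the ideal boundary of $F_n$ — the cusp, the funnel intervals, and the discrete set of seam endpoints, which you correctly note lie outside the horocyclic limit set — supplies detail the paper leaves implicit, and the inductive construction of $(t_k,n_k)$ together with the local-finiteness argument for $t_k\to\infty$ are standard steps the paper takes for granted.
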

In other words, the lift of a non-divergent geodesic on $O(2) \bs \SL_\pm(2,\R)/ V'$ to a geodesic on  $O(2) \bs \SL_\pm(2,\R)/ \tilde V'$ gives rise to a walk on the integers. We'll call this walk the {\em coding walk} of the geodesic $g^t A \tilde V'$.
\begin{proof}
If this is not true, then the geodesic change regions only finitely many times.
Then the geodesic eventually stays in one region, say $F_n$. But, we can see that the only geodesics that stay within $F_n$ forever are geodesics which exit the cusp $\infty_n$ or which exit the preimage of the convex core of 
$O(2) \bs \SL_\pm(2,\R)/ V'$ in $F_n$. (The region $F_n$ is an annulus with parabolic monodromy.) Both possibilities contradict the non-divergence of $g^t A V'$ in $\SL_\pm(2,\R)/V'$.
\end{proof}

The following is a tool which will be useful to obtain convergence in $\tilde O_{\Z_2}(S_L,\alpha_L)$
using the coding walk.

\begin{lemma}
\label{lem:convergence}
Let $U \subset O(2) \bs \SL_\pm(2,\R) / V'$ be a cusp neighborhood given by the points in the fundamental domain
of Figure \ref{fig:fundamental domain} with imaginary part larger than $2$. Let $C_d$  be a metric $d$-neighborhood of the convex core of $\subset O(2) \bs \SL_\pm(2,\R) / V'$. Let $A \in \SL(2,\R)$ and assume $g^t A V'$ is non-divergent in forward time on $\SL_\pm(2,\R)/V'$. Let $\{n_k\}$ be the associated coding walk. Let $(\tilde S_h,\tilde \alpha_h) \in \Cov_{\Z_2}(S_L,\alpha_L)$. There is a compact
set $K \subset \SL_\pm(2,\R)$ so that if $t$ lies in the interval $[t_k, t_{k+1}]$ associated to $n_k$ (as in Proposition \ref{prop:walk})
and $O(2) g^t A V' \in C_d \smallsetminus U$ then there is a $M \in K$ so that
$$g^t A (\tilde S_h,\tilde \alpha_h) = M \left(\tilde S_{\tau^{-n_k}_\ast(h)},\tilde \alpha_{\tau^{-n_k}_\ast(h)}\right)
\quad \text{in $\tilde O_{\Z_2}(S_L,\alpha_L)$.}
$$
Moreover, for any $k$ sufficiently large, there exists such a $t \in [t_k, t_{k+1}]$.
\end{lemma} 
\begin{proof}
Recall there is a covering map $O(2) \bs \SL_\pm(2,\R) / \tilde V' \to O(2) \bs \SL_\pm(2,\R) / V'$. 
Consider the region $F_0$ in the domain. Let $F_0' \subset O(2) \bs \SL_\pm(2,\R) / \tilde V'$ be the $d$-neighborhood of preimage of the convex core of
$O(2) \bs \SL_\pm(2,\R) / V'$ in $F_0$ with the preimage of $U$ removed. Then $F_0'$ is compact and this covering map sends $F_0'$ onto $C_d \smallsetminus U$. Let
$\tilde F_0' \subset \SL_\pm(2,\R) / \tilde V'$ be the preimage of $F_0'$ under projection to $O(2) \bs \SL_\pm(2,\R) / \tilde V'$. Then $\tilde F_0'$ is also compact and so there is a compact set $K \subset \SL_\pm(2,\R)$ which projects to $\tilde F_0'$ under the quotient map $\SL_\pm(2,\R) \to \SL_\pm(2,\R) / \tilde V'$.

Now consider a $t \in [t_k, t_{k+1}]$ so that $O(2) g^t A V' \in C_d \smallsetminus U$. Then by definition of the coding sequence, $O(2) g^t A \tilde V' \in F_{n_k}$. Then 
\begin{equation}
\label{eq:tau image}
\tau^{-n_k} \big(O(2) g^t A \tilde V'\big) \in F_0;
\end{equation}
see \eqref{eq:action on regions} and \eqref{eq:translation action}. Since $O(2) g^t A V'$ is asymptotic to the convex core in $O(2) \bs \SL_\pm(2,\R) / V$,
for $k$ sufficiently large the point in \eqref{eq:tau image} actually lies in $F_0'$.
Observe also that
$$\tau^{-n_k} \big(O(2) g^t A \tilde V'\big)=O(2) g^t A D\big((\rho \circ \psi)^{n_k}\big)\tilde V',$$
so that from the above there is an $R \in \tilde V'$ and an $M \in K$ so that
$$g^t A D\big((\rho \circ \psi)^{n_k}\big) R = M.$$

Now observe that
$$g^t A (\tilde S_h,\tilde \alpha_h)=M\cdot \Big(R^{-1} \circ D\big((\rho \circ \psi)^{-n_k}\big)(\tilde S_h,\tilde \alpha_h) \quad \text{in $\tilde O_{\Z_2}(S_L,\alpha_L)$,}
$$
where we think of the expression to the right of $M\cdot$ as the action of the Veech group of $(S_L,\alpha_L)$
on $\Cov_{\Z_2}(S_L,\alpha_L)$. Here $\Cov_{\Z_2}(S_L,\alpha_L)=\Hom(\Gamma,\Z_2)$; see \eqref{eq:covers equals hom}.
Thus, the action of $V'$ on $\Hom(\Gamma,\Z_2)$ factors through $\Delta=V'/\tilde V'$ and in particular,
$$\Big(R^{-1} \circ D\big((\rho \circ \psi)^{-n_k}\big)(\tilde S_h,\tilde \alpha_h)=(\tilde S_{\tau^{-n_k}_\ast(h)},\tilde \alpha_{\tau^{-n_k}_\ast(h)}) \quad \text{in $\Cov_{\Z_2}(S_L,\alpha_L)$.}$$

Finally, to obtain such a $t \in [t_k, t_{k+1}]$ for $k$ sufficiently large, observe that because $g^t A V'$ is asymptotic to the convex core in $\SL_\pm(2,\R)/V'$, it eventually lies in $C_d$. Values of $t=t_k$ and $t=t_{k+1}$ satisfy the statement because at these times
$O(2) g^{t_k} A \tilde V'$ lies in the boundary of a region, so the corresponding point $O(2) g^{t_k} A V'$ in $O(2) \bs \SL_\pm(2,\R) / V'$ lies on the geodesic joining $-1$ to $1$ in 
Figure \ref{fig:fundamental domain} and in particular this point does not lie in $U$.
\end{proof}

We already understand the ergodic properties of the translation flow of covers when $g^t A \tilde V'$ is non-divergent; see Proposition \ref{prop:non-divergence covers}. This non-divergence can be captured by the coding walk:

\begin{proposition}
\label{prop:non-divergence in tilde V'}
Assume $g^t A V'$ is non-divergent in forward time on $\SL_\pm(2,\R)/ V'$.
The trajectory $g^t A \tilde V'$ is non-divergent in $\SL_\pm(2,\R) / \tilde V'$ if and only if the coding walk $\langle n_k\rangle$ is recurrent, i.e., there is an $n \in \Z$ so that $n_k=n$ for infinitely many $k$.
\end{proposition}
\begin{proof}
First suppose $g^t A \tilde V'$ is non-divergent. Then $g^t A \tilde V'$ has an $\omega$-limit point in $\SL_\pm(2,\R) / \tilde V'$. This limit point must lie in some region $F_n$. Proposition \ref{prop:walk} tells us we must visit $F_n$ infinitely often, so infinitely many $n_k=n$. Now suppose the coding walk recurs to $n$. Then infinitely many times the trajectory $g^t A \tilde V'$  moves from $F_{n-1}$ to $F_n$ or from $F_{n+1}$ to $F_n$. This means that the trajectory $g^t A \tilde V'$ infinitely often visits a compact neighborhood of the common boundary between two adjacent regions. But then the trajectory must have an $\omega$-limit point in this compact neighborhood,
and so the trajectory is non-divergent.
\end{proof}

The proposition above tells us that the only way devious covers could exist
is if the coding walk does not recur. In this case $\lim n_k=+\infty$ or $\lim n_k=-\infty$. Theorem \ref{thm:1} tells us that covers which are not devious
have ergodic translation flow. So, it remains to understand the devious covers
when $\lim n_k= \pm \infty$. We now state our main result of the section.

\begin{theorem}[Main result on double covers of the ladder surface]
\label{thm:main ladder}
Let $A \in \SL(2,\R)$ and suppose that the geodesic $g^t A V'$ is non-divergent in $\SL_\pm(2,\R) / V'$. Let $\langle n_k \rangle$ be the coding walk of the geodesic $g^t A \tilde V'$, and assume this walk does not recur.
Then $\lim_{k \to \infty} n_k=s \infty$ where $s\in\{\pm 1\}$ is a sign.
In this case, there exist covers $(\tilde S_h, \tilde \alpha_h) \in \Cov_{\Z_2}(S_L,\alpha_L)$ so that $A(\tilde S_h, \tilde \alpha_h)$ is a devious double cover
of $A(S_L,\alpha_L)$. We define the {\em growth exponent of the visit count} to be
$$v=\limsup_{N \to s \infty} \big(\# \{k:~n_k=N\}\big)^\frac{1}{|N|}.$$
Then the following hold:
\begin{enumerate}
\item[(L2)] If $v>\varphi^2$, then  all devious double covers $A(\tilde S_h, \tilde \alpha_h)$ have ergodic
translation flow. 
\item[(L3)] If $v< \varphi^2$, then all devious double covers $A(\tilde S_h, \tilde \alpha_h)$ have non-ergodic
translation flow. 
\end{enumerate}
\end{theorem}

Our first order of business in proving this theorem is to characterize the devious double covers $A(\tilde S_h, \tilde \alpha_h)$ of $A(S_L,\alpha_L)$.
We will prove the following:

\begin{lemma}[Criterion for deviousness]
\label{lem:deviousness}
In the context of the theorem above, suppose that $\langle n_k \rangle$ limits
to $s\infty$ with $s \in \{\pm 1\}$. 
Let $A \in \SL_\pm(2,\R)$ and $h \in \Hom(\Gamma,\Z_2)$ be chosen so that
$h \neq \0$, where  $\0 \in \Hom(\Gamma,\Z_2)$ denotes the trivial homomorphism.
Then, the cover $A(\tilde S_h, \tilde \alpha_h)$ of $A(S_L,\alpha_L)$ is devious if and only if
$$\lim_{m \to +\infty} \tau^{-sm}_\ast(h)= \0.$$
\end{lemma}

\begin{remark}[Reduction to case $s=-1$]
\label{rem:reflection simplification}
It suffices to prove Lemma \ref{lem:deviousness} in the case of $s=-1$. 
To see this fix $A$ and suppose $\lim n_k=+\infty$.
Observe that $A(\tilde S_h, \tilde \alpha_h)$ is a devious double cover of $A(S_L,\alpha_L)$ if and only if $A'(\tilde S_{\psi_\ast(h)}, \tilde \alpha_{\psi_\ast(h)})$ is a devious double cover of $A'(S_L,\alpha_L)$, since the surfaces are translation equivalent. Let $A'=D(\psi)A$ and $h'=\psi_\ast(h)$.
Use $A'$ to define the coding walk $\langle n_k' \rangle$. Because of the action of $D(\psi)$ on regions (see \eqref{eq:delta} and \eqref{eq:action on regions}),
we have $n_k'=-n_k$ so that $\lim n_k=-\infty$. Finally observe that
$\lim_{m \to \infty} \tau^{-m}_\ast(h)=\0$ if and only if 
$\lim_{n \to \infty} \tau^{-n}_\ast(h')=\0$ since
$$\tau^{-n}_\ast(h')=\tau^{-n} \circ \psi_\ast(h)=
\psi_\ast \circ \tau_\ast^{n}(h) \quad \text{for all $n$}$$
as the action factors through the $\Isom(\Z)$; see \eqref{eq:ast action}.
Similar observations demonstrate that we can assume $s=-1$ in the proof of Theorem \ref{thm:main ladder} as well.
\end{remark}

One direction of Lemma \ref{lem:deviousness} is fairly easy:

\begin{proof}[Proof of the ``only if'' part of Lemma \ref{lem:deviousness}]
From the remark above, we can assume $s=-1$. Observe that a cover $(\tilde S_{h'},\tilde \alpha_{h'})$ is connected if and only if $h' \neq \0$. 
So, assume $\lim n_k=-\infty$
and there is an $h' \neq \0$ which is an accumulation point of $\tau^m_\ast(h)$ as $m \to +\infty$. Let $m(j)$ be an increasing sequence of integers so that $\tau^{m(j)}_\ast(h) \to h'$.
We will show that $g^t A(\tilde S_h, \tilde \alpha_h)$ has a connected accumulation point in  $\tilde \sO_{\Z_2}(S_L,\alpha_L)$.

Since the walk $\langle n_k \rangle$ tends to $-\infty$, for sufficiently large integers $j$, there is a $k(j)$ so that $n_{k(j)}=-m(j)$. From Lemma \ref{lem:convergence}, there is a sequence of times $t^j$ taken from the intervals associated to $n_{k(j)}$, a compact subset $K \subset \SL_\pm(2,\R)$, and $M_j \in K$ so that
$$g^{t^j} A (\tilde S_h, \tilde \alpha_h) = M_j \left(\tilde S_{\tau^{-n_{k(j)}}_\ast(h)},\tilde \alpha_{\tau^{-n_{k(j)}}_\ast(h)}\right).$$
Since $n_{k(j)}=-m(j)$ we know $\lim_{k \to \infty} \tau^{-n_{k(j)}}_\ast(h)=h'$. Since $K$ is compact, we find a subsequence $M_{j(i)}$ converging to some $M \in K$. Then 
$$\lim_{i \to \infty} M_{j(i)} \left(\tilde S_{\tau^{-n_{k \circ j(i)}}_\ast(h)},\tilde \alpha_{\tau^{-n_{k \circ j(i)}}_\ast(h)}\right)=M (\tilde S_{h'}, \tilde \alpha_{h'}) \quad \text{in $\tilde \sO(S_L,\alpha_L)$.}$$
This is our desired connected accumulation point.
\end{proof}

The other direction is more difficult in part because we do not know if $V'=V(S_L,\alpha_L)$. We need to show that if $\lim_{m \to +\infty} \tau^{-sm}_\ast(h)= \0$ then every accumulation point of $g^t A(\tilde S_h, \tilde \alpha_h)$ in $\tilde \sO_{\Z_2}(S_L,\alpha_L)$ is disconnected. But $\tilde \sO_{\Z_2}(S_L,\alpha_L)$ is a quotient of $\SL(2,\R) \times \Cov_{\Z_2}(S_L,\alpha_L)$ by $V(S_L,\alpha)$ which we do not know. 

Fortunately, we are restricting attention to $A \in \SL(2,\R)$ so that $g^t A V'$ is non-divergent in $\SL_\pm(2,\R)/V'$. Then the geodesic $g^t A V'$ is asymptotic to the convex core.
We will use this and a compactness argument to argue that if $g^{t_i} A(\tilde S_h, \tilde \alpha_h)$ converges then there is a subsequence converging to a disconnected cover. Assuming that the space $\tilde \sO_{\Z_2}(S_L,\alpha_L)$ is Hausdorff, the two limit points must be the same showing that every accumulation point is disconnected. So we need:

\begin{proposition}
\label{prop:Hausdorff}
The space $\tilde \sO_{\Z_2}(S_L,\alpha_L)$ is Hausdorff.
\end{proposition}
\begin{proof}
Let $A(\tilde S_h, \tilde \alpha_h)$ and $A'(\tilde S_{h'}, \tilde \alpha_{h'})$
be distinct points in $\tilde \sO_{\Z_2}(S_L,\alpha_L)$. We will find an open sets that isolate them from each other. 

Observe that there is a natural continuous map $\tilde \sO_{\Z_2}(S_L,\alpha_L) \to \sO(S_L,\alpha_L)$ which sends $A(\tilde S_h, \tilde \alpha_h)$
to the quotient of $A(\tilde S_h, \tilde \alpha_h)$ by its translation automorphisms. Since all double covers are regular and $(S_L,\alpha_L)$ has no translation automorphisms, there is always a $\Z_2$ action on $A(\tilde S_h, \tilde \alpha_h)$ by its translation automorphisms. If under this map
$A(\tilde S_h, \tilde \alpha_h)$ and $A'(\tilde S_{h'}, \tilde \alpha_{h'})$ project to different points, then we can build disjoint open sets by lifting disjoint open sets from $\sO(S_L,\alpha_L)$. Recall that $\sO(S_L,\alpha_L)$ is naturally identified with $\SL_\pm(2,\R)/V(S_L,\alpha_L)$ which is Hausdorff because $V(S_L,\alpha_L)$ is discrete by Proposition \ref{prop:discrete}.

Now suppose that  $A(\tilde S_h, \tilde \alpha_h)$ and $A'(\tilde S_{h'}, \tilde \alpha_{h'})$ both cover $A(S_L,\alpha_L)$. By left multiplying both surfaces by $A^{-1}$ we can assume without loss of generality that both surfaces cover
$(S_L,\alpha_L)$. Then they both lie in the $\Cov_{\Z_2}(S_L,\alpha_L)$. 
Recall $\Cov_{\Z_2}(S_L,\alpha_L)=\Hom(\Gamma,\Z_2)$; see \eqref{eq:covers equals hom}. This space is a Cantor set and so is Hausdorff. So, we can find open sets $U_1$ and $U_2$ that isolate the two surfaces within $\Cov_{\Z_2}(S_L,\alpha_L)$. Since the Veech groups are discrete, we can find a neighborhood $N \subset \SL_\pm(2,\R)$ so that no non-trivial element of $V(S_L,\alpha_L)$ has the form $B_1^{-1} B_2$ where $B_1,B_2 \in N$. We claim that the images of
$N \times U_1$ and $N \times U_2$ are disjoint in $\tilde \sO_{\Z_2}(S_L,\alpha_L)$. Let $B_1 (S_{h_1},\alpha_{h_1})$ and $B_2 (S_{h_2},\alpha_{h_2})$ be two surfaces taken from these neighborhoods. Then they are translation equivalent if and only if  $(S_{h_1},\alpha_{h_1})$ is equivalent to $B_1^{-1} B_2 (S_{h_2},\alpha_{h_2})$. If they are translation equivalent, we must have
$B_1^{-1} B_2= I $ from arguments in the previous paragraph together with the definition of $N$. But $(S_{h_1},\alpha_{h_1}) \in U_1$ and $(S_{h_2},\alpha_{h_2}) \in U_2$, so they can not be translation equivalent.
\end{proof}

The following will finish our proof of Lemma \ref{lem:deviousness}.

\begin{proof}[Proof of the ``if'' part of Lemma \ref{lem:deviousness}]
Suppose that $\lim n_k=-\infty$ and $\lim_{m \to +\infty} \tau^{m}_\ast(h)= \0$. We must show that the cover $A(\tilde S_{h}, \tilde \alpha_h)$ of $A(S_L, \alpha_L)$ is devious.

We will show every $\omega$-limit point of $g^t A (\tilde S_{h}, \tilde \alpha_{h})$ in $\tilde \sO_{\Z_2}(S_L,\alpha_L)$ is disconnected.
Suppose that $t_i$ is a sequence of times tending to $+\infty$ and 
 $$\lim_{i \to \infty} g^{t_i} A (\tilde S_{h}, \tilde \alpha_{h})=B(\tilde S_{h'}, \tilde \alpha_{h'}) \quad \text{in $\tilde O_{\Z_2}(S_L,\alpha_L)$}.$$

Consider the sequence of points $p_i=O(2) g^{t_i} A V'$ in $O(2) \bs \SL_\pm(2,\R) / V'$. We claim that there is a cusp neighborhood $U$ of $O(2) \bs \SL_\pm(2,\R) / V'$ (given by $\mathrm{Im}(z)>c$ for some $c \geq 2$ in the fundamental domain of Figure \ref{fig:fundamental domain}) so that $p_i \not \in U$ for all $i$. Otherwise there is a subsequence where $p_{i_j}$ exists the cusp. However, in this case the injectivity radius of the surface $g^{t_{i_j}} A (S_L,\alpha_L)$ would tend to zero, because $g^{t_{i_j}} A$ would contract the circumferences of some cylinder decomposition (which up the action of $V'$ is vertical) more an more as we move up the cusp, and thus $g^{t_{i_j}} A (S_L,\alpha_L)$ would have injectivity radius tending to zero as $j \to \infty$. For the same reason,
the injectivity radius of the surfaces $g^{t_{i_j}} A (\tilde S_h \tilde \alpha_h)$ would have to tend to zero. But this contradicts that this sequence limits to $B(\tilde S_{h'}, \tilde \alpha_{h'})$.

Let $C_d$ be a $d$-neighborhood of the compact core of $O(2) \bs \SL_\pm(2,\R) / V'$. Since by hypothesis $g^t A V'$ is non-divergent in $\SL_\pm(2,\R)/V'$, we know that this trajectory is asymptotic to the convex core. Combining this with this the previous paragraph, we see that
$$O(2) g^{t_i} A V' \in C_d \smallsetminus U \quad \text{for $i$ sufficiently large.}$$
By compactness of the fiber over $C_d \smallsetminus U$, we can assume that after passing to a subsequence that 
$g^{t_i} A V'$ converges in $\SL_\pm(2,\R)/V'$.

Now consider the sequence $g^{t_i} A \tilde V'$ in $\SL_\pm(2,\R) / \tilde V'$. Let $k(i)$ be such that $t_i \in [t_{k(i)}, t_{k(i)+1}]$. Defining
$$\tilde p_i=O(2) g^{t_i} A \tilde V' \quad \text{we see} \quad
\tilde p_i \in F_{n_{k(i)}} \quad  \text{for each $i$,}$$ 
and so $\tau^{-n_{k(i)}}(\tilde p_i) \in F_{0}$ for each $i$ where
$\tau^{-n_{k(i)}}$ is acting as an element of the deck group of the covering \eqref{eq:cover} as in \eqref{eq:translation action}. Since each point of 
$O(2) \bs \SL_\pm(2,\R) / V'$ has only two lifts to $F_0$, we see that
by passing to a subsequence we can assume
$\tau^{-n_{k(i)}}(g^{t_i} A \tilde V')$ converges in $\SL_\pm(2,\R) / \tilde V'$
above $F_0$. Recalling how $\tau$ acts (see \eqref{eq:ast action} and \eqref{eq:translation action}), this means there is a sequence $R_i \in \tilde V'$ so that
\begin{equation}
\label{eq:convergence of matrices}
g^{t_i} A D(\rho \circ \psi)^{n_{k(i)}} R_i \quad \text{converges to some limit $L \in \SL_\pm(2,\R)$}.
\end{equation}
Observe that in $\tilde \sO_{\Z_2}(S_L,\alpha_L)$ we have
$$\begin{array}{rcl}
\displaystyle g^{t_i} A (\tilde S_{h}, \tilde \alpha_{h}) & = &  
\displaystyle g^{t_i} A D(\rho \circ \psi)^{n_{k(i)}} R_i \cdot \big(R_i^{-1} D(\rho \circ \psi)^{-n_{k(i)}} (\tilde S_{h}, \tilde \alpha_{h})\big) \\
& = & \displaystyle g^{t_i} A D(\rho \circ \psi)^{n_{k(i)}} R_i
\big(\tilde S_{\tau^{-n_{k(i)}}_\ast(h)}, \tilde \alpha_{\tau^{-n_{k(i)}}_\ast(h)}\big).
\end{array}$$
The $\SL_\pm(2,\R)$ part of this last expression converges to $L$. The surface part converges to $(\tilde S_{\0}, \tilde \alpha_\0)$ because $\lim n_k=-\infty$ and $\lim_{m \to +\infty} \tau^{m}_\ast(h)= \0$. So, because $\tilde O_{\Z_2}(S_L,\alpha_L)$ is Hausdorff by Proposition \ref{prop:Hausdorff}, we must have
$$B(\tilde S_{h'}, \tilde \alpha_{h'})=\lim_{i \to \infty} \displaystyle g^{t_i} A (\tilde S_{h}, \tilde \alpha_{h})=L (\tilde S_{\0}, \tilde \alpha_\0).$$
In particular,
$B(\tilde S_{h'}, \tilde \alpha_{h'})$ is disconnected.
\end{proof}

Now we will study devious covers by studying those $h$ so that $\tau^m_\ast(h)$ decays to $\0$. A formula for $\tau_\ast$ is given below.

\begin{proposition}
\label{prop:tau 1}
The action of translation by one on $\Hom(\Gamma, \Z_2)$ is given by 
\begin{equation*}
\big(\tau_\ast(h)\big)(i)=\begin{cases}
h(-i)+h({i-1})+h({i})+h({i+1}) & \text{if $i<-2$,} \\
h(-i)+h({-2})+h({-3}) & \text{if $i=-2$,} \\
h(-i)+h({-2}) & \text{if $i=-1$,} \\
h(-i) & \text{if $i>0$.} 
\end{cases}
\end{equation*}
The action of translation by negative one is given by:
\begin{equation*}
\big(\tau^{-1}_\ast(h)\big)(i)=\begin{cases}
h(-i) & \text{if $i <0$,} \\
h(-i)+h({2}) & \text{if $i=1$,} \\
h(-i)+h({2})+h({3}) & \text{if $i=2$,} \\
h(-i)+h({i-1})+h({i})+h({i+1}) & \text{if $i>2$.}
\end{cases}
\end{equation*}

\end{proposition}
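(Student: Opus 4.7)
The plan is to carry out a direct computation using the formulas for $\psi_\ast$ and $\rho_\ast$ given in equations (\ref{eq:psi ladder}) and (\ref{eq:rho ladder}), together with the observation that $\psi_\ast^2$ acts trivially on $\Hom(\Gamma,\Z_2)$. By definition $\tau^m_\ast=(\rho_\ast\circ\psi_\ast)^m$, so the first statement reduces to computing $\tau^1_\ast(h)=\rho_\ast\circ\psi_\ast(h)$, which by (\ref{eq:rho ladder}) equals the function $i\mapsto\bigl(\psi_\ast(h)\bigr)(-i)$. I would then split into the four cases dictated by where $-i$ falls in equation (\ref{eq:psi ladder}): when $i>0$ we have $-i<0$ and the value is simply $h(-i)$; when $i=-1$ we have $-i=1$ giving $h(-i)+h(-2)$; when $i=-2$ we have $-i=2$ giving $h(-i)+h(-2)+h(-3)$; and when $i<-2$ we have $-i>2$, and substituting into the last case of (\ref{eq:psi ladder}) produces $h(-i)+h(i-1)+h(i)+h(i+1)$ (since $-(-i)\pm 1 = i\mp 1$).

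For the formula for $\tau^{-1}_\ast$, I would first note the key simplification that both $\rho_\ast$ and $\psi_\ast$ are involutions on $\Hom(\Gamma,\Z_2)$: $\rho^2=\mathrm{id}$ trivially, while $\psi_\ast^2$ acts trivially on $\Hom(\Gamma,\Z_2)$ by the immediately preceding proposition. Hence
\[
\tau^{-1}_\ast=(\rho_\ast\circ\psi_\ast)^{-1}=\psi_\ast^{-1}\circ\rho_\ast^{-1}=\psi_\ast\circ\rho_\ast.
\]
Then the computation of $\bigl(\psi_\ast\circ\rho_\ast(h)\bigr)(i)$ again proceeds case by case following the four-way split of (\ref{eq:psi ladder}), with each argument inside $\rho_\ast(h)$ replaced by its negation. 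The four cases (corresponding to $i<0$, $i=1$, $i=2$, $i>2$) then yield precisely the four lines of the second displayed formula.

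The verification is purely mechanical and the only non-routine ingredient is the observation $\psi_\ast^{-1}=\psi_\ast$ on $\Hom(\Gamma,\Z_2)$, which we have in hand. No obstacle arises beyond bookkeeping of signs of indices in the piecewise formula for $\psi_\ast$.
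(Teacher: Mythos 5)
Your proof is correct and follows essentially the same route as the paper: both compute $\tau^1_\ast=\rho_\ast\circ\psi_\ast$ directly from equations (\ref{eq:psi ladder}) and (\ref{eq:rho ladder}) by a case split on the sign of $-i$, and both reduce $\tau^{-1}_\ast$ to $\psi_\ast\circ\rho_\ast$. Your explicit remark that $\psi_\ast^{-1}=\psi_\ast$ on $\Hom(\Gamma,\Z_2)$ (via the preceding proposition that $\psi_\ast^2$ is trivial) is a clean way to justify the identity $\tau^{-1}_\ast=\psi_\ast\circ\rho_\ast$, which the paper instead asserts via $\tau^{-1}=\delta(\psi\circ\rho)$; the two justifications rest on the same underlying fact.
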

\begin{proof}
This follows from the fact that $\tau=\delta \circ D(\rho \circ \psi)$ and
$\tau^{-1}=\delta \circ D(\psi \circ \rho)$; see equation \ref{eq:delta}. The actions of $\psi$ and $\rho$ on $\Hom(\Gamma, \Z_2)$
are given in equations \ref{eq:psi ladder} and \ref{eq:rho ladder}, respectively.
\end{proof}

To analyze the case when $\lim_{m \to +\infty} \tau^m_\ast(h)=\0$,
we introduce the {\em proximity function} which measures how close a homomorphism is to the trivial homomorphism $\0$:
\begin{equation}
\label{eq:proximity}
P:\Hom(\Gamma, \Z_2) \to \{1,2,3, \ldots, \infty\};
\quad 
P(h)= \begin{cases}
\infty & \text{if $h=\0$,}\\
\min \{|i|~:~h(i) \neq 0\} & \text{otherwise}.
\end{cases}
\end{equation}
Observe a sequence $h_n$ tends to $\0$ if and only if $P(h_n) \to \infty$. 

For each integer $k \geq 0$, we define a cylinder set in $\Hom(\Gamma, \Z_2)$ by
$$C_k=\{h~:~\text{$h(k)=h(-k-1)=1$  and $h(i)=0$ when $-k-1<i<k$}\}.$$ 
Note that $h \in C_k$ implies $P(h)=k$.
These cylinder sets turn out to be very important, for understanding which elements of $\Hom(\Gamma, \Z_2)$ are limit to the trivial homomorphism $\0$
under translation.

\begin{lemma}
\label{lem:limits to zero}
The collection of $h \in \Hom(\Gamma, \Z_2)$ so that $\lim_{n \to +\infty} \tau^n_\ast(h)=\0$ is given by
$$
\{\0\} \cup \bigcup_{j \geq 0} \tau^{-j}_\ast(Z)
\quad \text{where}
\quad Z=\bigcup_{k \geq 2} \bigcap_{m \geq 0} \tau^{-m}_\ast(C_{k+m}).$$
Furthermore, for any function $f:\{i~:~i \geq 2\} \to \Z_2$ which is not identically zero,
there is an $h \in Z$ so that $h(i)=f(i)$ for all $i \geq 2$. 
\end{lemma}
The second statement says that there are a number of elements of $\Hom(\Gamma, \Z_2)$ which limit on $\0$. (Informally, $Z$ has half the information entropy of $\Hom(\Gamma, \Z_2)$, or with an appropriate natural metric, $Z$ has half the Hausdorff dimension of $\Hom(\Gamma, \Z_2)$.)
This together with Lemma \ref{lem:deviousness} prove the existence of devious covers as stated in Theorem \ref{thm:main ladder}.b

The following proposition is the main ingredient in the proof of the lemma above.

\begin{proposition}
\label{prop:cylinder}
\begin{enumerate}
\item If $k \geq 2$ and $h \in C_k$, then $P\big(\tau_\ast(h)\big)=k+1$. 
\item If $k \geq 2$ is an integer, then $\tau^{-1}_\ast(C_{k+1}) \subset C_k$.
\item If $k \geq 2$, $P(h)=k$ and $h \not \in C_k$, then either 
$P\big(\tau_\ast(h)\big)=k-1$ and $\tau_\ast(h) \not \in C_{k-1}$
or $P\big(\tau^2_\ast(h)\big)=k-1$ and $\tau^2_\ast(h) \not \in C_{k-1}$.
\end{enumerate}
\end{proposition}
\begin{proof}
We prove each of the statements below.

{\bf (1) } Suppose that $h \in C_k$ and $k \geq 2$. Then $h(k)=h(-k-1)=1$ while $h(i)=0$ for $-k \leq i \leq k-1$. Then $P\big(\tau_\ast(h)\big)\leq k+1$, because
$\tau_\ast(h)(k+1)=h(-k-1)=1$ 
by Proposition \ref{prop:tau 1}. For $0<i \leq k$, we have
$\tau_\ast(h)(i)=h(-i)=0$. Also observe that
$$\tau_\ast(h)(-k)=h(k)+h(-k-1)=1+1=0,$$
while the other terms from Proposition \ref{prop:tau 1} vanish. Finally, all terms in the expression for $\tau_\ast(h)(i)$ vanish when $-k<i<0$. Therefore,
$P\big(\tau_\ast(h)\big)= k+1$ as claimed.

{\bf (2) } Suppose $h \in C_{k+1}$ with $k \geq 2$. Then $h(k+1)=h(-k-2)=1$ while $h(i)=0$ for $-k-1 \leq i \leq k$. By Proposition \ref{prop:tau 1}, we see that
$\tau^{-1}_\ast(h)(-k-1)=h(k+1)=1.$ Also,
$$\tau^{-1}_\ast(h)(k)=\begin{cases}
h(-k)+h(2)+h(3)=0+0+1=1 & \text{if $k=2$,}\\
h(-k)+h(k-1)+h(k)+h(k+1)=0+0+0+1=1 & \text{if $k>2$}.
\end{cases}
$$
Now suppose $-k \leq i <0$. Here we have
$\tau^{-1}_\ast(h)(i)=h(-i)=0.$
Finally consider the case when $0 < i < k$. All terms vanish from
the expression for the expression $\tau^{-1}_\ast(h)(i)$ given in Proposition \ref{prop:tau 1}, so $\tau^{-1}_\ast(h)(i)=0$.
Taken together, we see $\tau^{-1}_\ast(h) \in C_k$.

{\bf (3) } Suppose $k \geq 2$, $P(h)=k$ and $h \not \in C_k$.
Since $P(h)=k$, we know that $h(i)=0$ when $|i|<k$.  Observe that the set of $h$ with $P(h)=k$ and
$h \not \in C_k$ is the union of the two pieces:
$$A=\{h~:~\text{$P(h)=k$ and $h(-k)=1$}\},$$
$$B=\{h~:~\text{$P(h)=k$, $h(-k)=0$, $h(k)=1$, and $h(-k-1)=0$}\}.$$
First assume that $h \in A$ so that $h(-k)=1$ but $h(i)=0$ when $|i|<k$. Then, 
$\tau^{1}_\ast(h)(-k+1)$ is given by one of the expressions
$$
\begin{cases}
h(k-1)+h(-k)+h(-k+1)+h(-k+2)=0+1+0+0=1 & \text{if $k>3$},\\
h(k-1)+h(-2)+h(-3)=0+0+1=1 & \text{if $k=3$},\\
h(k-1)+h(-2)=0+1=1 & \text{if $k=2$}.
\end{cases}$$
Since proximity can decrease by at most one when applying $\tau^{1}_\ast$,
we see $P\big(\tau^{1}_\ast(h)\big)=k-1$. Also, we have by definition of $C_{k-1}$
that $\tau^{1}_\ast(h) \not \in C_{k-1}$.
Now consider the case when $h \in B$. Then $h(i)=0$ when $-k-1 \leq i<k$
and $h(k)=1$. In this case $\tau^{1}_\ast(h)(i)=0$ when $|i|<k-1$ since proximity can decrease by at most one. We observe $\tau^{1}_\ast(h)(k-1)=h(-k+1)=0.$
All terms vanish in the expression for $\tau^{1}_\ast(h)(-k+1)$ given by 
Proposition \ref{prop:tau 1}, so $\tau^{1}_\ast(h)(-k+1)=0.$
On the other hand,
$$\tau^{1}_\ast(h)(-k)=\begin{cases}
h(k)+h({-k-1})+h({-k})+h({-k+1})=1+0+0+0=1 & \text{if $k>2$,} \\
h(k)+h({-2})+h({-3})=1+0+0=1 & \text{if $k=2$.} 
\end{cases}
$$
Since $\tau^{1}_\ast(h)(-k)=1$, we see that $\tau^{1}_\ast(h) \in A$.
Because of our discussion of what happens for elements of $A$, we
see that $P\big(\tau^2_\ast(h)\big)=k-1$ and $\tau^2_\ast(h) \not \in C_{k-1}$.
\end{proof}

In the proof of Lemma \ref{lem:limits to zero}, it is useful to note the following Corollary to the Proposition above.
\begin{corollary}
Let $k \geq 1$ be an integer.
If $P(h)=k$ but $h \not \in C_k$, then there is an $n \geq 0$ so that $P\big(\tau^n_\ast(h)\big)=1$.
\end{corollary}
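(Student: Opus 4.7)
The plan is a straightforward induction on the proximity $k$. The key observation is that part (3) of Lemma \ref{lem:cylinder} is exactly the tool to decrease proximity by one while keeping us in the hypothesis class of the corollary (i.e., not inside the cylinder $C_{k-1}$, so that we can apply part (3) again at the next stage).

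For the base case $k=1$, the conclusion holds trivially by taking $n=0$, since $P(h)=1$ is assumed. For the inductive step, suppose $k \geq 2$ and the corollary is known for all smaller values. Given $h$ with $P(h)=k$ and $h \notin C_k$, apply part (3) of Lemma \ref{lem:cylinder} directly: this yields some $j \in \{1,2\}$ with
\[
P\bigl(\tau^j_\ast(h)\bigr)=k-1 \quad \text{and} \quad \tau^j_\ast(h) \notin C_{k-1}.
\]
If $k=2$, then $P(\tau^j_\ast(h))=1$ and we are done with $n=j$. If $k\geq 3$, then $\tau^j_\ast(h)$ satisfies the hypotheses of the corollary at the smaller value $k-1 \geq 2$, so the inductive hypothesis supplies an $n'\geq 0$ with $P\bigl(\tau^{n'}_\ast(\tau^j_\ast(h))\bigr)=1$. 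Since $\tau^{n'+j}_\ast = \tau^{n'}_\ast \circ \tau^j_\ast$, the value $n=n'+j$ works.

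There is no real obstacle here; the whole content has been packed into Lemma \ref{lem:cylinder}(3). The only thing to notice is that the "$\notin C_{k-1}$" clause in part (3) is precisely what licenses the next invocation of the lemma, and the iteration must halt after at most $2(k-1)$ applications of $\tau_\ast$ (each step reducing proximity by one and costing one or two translations). This also gives the effective bound $n \leq 2(k-1)$, though the corollary only asserts existence.
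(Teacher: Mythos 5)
Your proof is correct and follows essentially the same inductive argument as the paper: base case plus repeated invocation of Lemma \ref{lem:cylinder}(3) to drop the proximity by one while staying outside the relevant cylinder set. The extra observation about the effective bound $n \leq 2(k-1)$ is a nice addition but not needed for the statement.
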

\begin{proof}
We prove this by induction in $k$.
This holds with $n=0$ when $k=1$. When $k=2$, this follows from statement (3) of Proposition \ref{prop:cylinder}. Now let $k \geq 3$ and suppose the conclusion holds  when $P(h)=k-1$ and $h \not \in C_{k-1}$. Suppose $P(h)=k$ and $h \not \in C_k$. Again using statement (3) of Proposition \ref{prop:cylinder}, we see that 
either $P\big(\tau_\ast(h)\big)=k-1$ and $\tau_\ast(h) \not \in C_{k-1}$
or $P\big(\tau^2_\ast(h)\big)=k-1$ and $\tau^2_\ast(h) \not \in C_{k-1}$. Using our induction hypothesis applied to these cases, we see the conclusion holds, and the whole statement holds by induction.
\end{proof}

\begin{proof}[Proof of Lemma \ref{lem:limits to zero}]
Let $Z=\bigcup_{k \geq 2} \bigcap_{m \geq 0} \tau^{-m}_\ast(C_{k+m})$ as in the theorem. Let $h \in Z$. Then there is a $k \geq 2$ so that $\tau^m_\ast(h) \in C_{k+m}$ for all $m \geq 0$. Then $P\big(\tau^m_\ast(h)\big)=k+m$ tends to $\infty$ as $m \to \infty$. Thus, $\lim_{m \to \infty} \tau^m_\ast(h)=\0$. From this it follows that everything in the set listed in the theorem, 
$\{\0\} \cup \bigcup_{j \geq 0} \tau^{-j}_\ast(Z)$, limits to $\0$.

Conversely, suppose $h \in \Hom(\Gamma, \Z_2)$ satisfies $\lim_{n \to \infty} \tau^n_\ast(h)=\0$ and $h \neq \0$. We know that $P \big(\tau^n_\ast(h)\big) \to \infty$ as $n \to \infty$. In particular, we see that for any $k \geq 1$, there are only finitely many $n$ so that 
\begin{equation}
\label{eq:proximity2}
P \big(\tau^n_\ast(h)\big)=k.
\end{equation}
Let $K \geq 2$ be the smallest value of $k$ larger than one so that the above equation has a solution for $n$, and let $N$ be the maximal $n$ satisfying the equation when $k=K$. We claim that
\begin{equation}
\label{eq:h}
\tau^N_\ast(h) \in \bigcap_{n \geq 0} \tau^{-n}_\ast(C_{K+n}).
\end{equation}
Otherwise, there is a smallest $n \geq 0$ so that $\tau^{N+n}_\ast(h) \not \in C_{K+n}$. Then by the corollary above, we see that there is an $m \geq N+n$
so that $P\big(\tau^m_\ast(h)\big)=1$. But since $P \big(\tau^n_\ast(h)\big)$ tends to $\infty$ and as $n$ increases the proximity can increase by at most one,
we see that there is an $m'>m$ so that \eqref{eq:proximity2} is satisfied for $n=m'$. But this contradicts the definition of $N$.  Therefore, \eqref{eq:h} is true after all. Observe that the equation implies that
$\tau^N_\ast(h) \in Z$, and thus $h \in \bigcup_{j \geq 0} \tau^{-j}_\ast(Z)$
as desired.

It remains to prove the last sentence of the theorem which guarantees there are a lot of $h$ so that $\lim_{n \to \infty} \tau^n_\ast(h)=\0$.
A main point here is that for each $k \geq 2$, the set 
$A_k=\bigcap_{m \geq 0}  \tau^{-m}_\ast(C_{k+m})$ is non-empty.
Indeed, as continuous images of cylinder sets, each $\tau^{-m}_\ast(C_{k+m})$ is compact. The sets being intersected are nested in the sense that
$$\tau^{-m-1}_\ast(C_{k+m+1}) \subset \tau^{-m}_\ast(C_{k+m})$$
by statement (2) of the lemma. Therefore, we can make a choice of an $\tilde h_k \in A_k$ for every $k \geq 2$.

Now suppose that $f: \{i \in \Z~:~i \geq 2\} \to \Z_2$ is defined and not identically zero. Let $k=\min \{i~:~f(i)\neq 0\}$. 
We will inductively define a sequence of functions $h_j \in \Hom(\Gamma, \Z_2)$
for $j \geq k$ so that the sequence converges to an extension of $f$. 
Our functions will all lie in $A_k$. Since $A_k$ is closed, this will suffice
to prove that the limiting function lies in $A_k$. We will also ensure that
\begin{equation}
\label{eq:ast}
h_j(i)=f(i) \quad \text{when $2 \leq i \leq j$}.
\end{equation}
To ensure convergence of the sequence $h_j$, we will also have that
\begin{equation}
\label{eq:prior}
h_j(i)=h_{j-1}(i) \quad \text{when $|i|<j$.}
\end{equation}
Observe that $f(k)=1$, thus we can define $h_k=\tilde h_k \in A_k$. This serves as our base case. Now assume that $j > k$ and $h_{j-1}$ is defined and satisfies the hypotheses above. We will define $h_{j}$. If $h_{j-1}(j)=f(j)$,
then we can take $h_j=h_{j-1}$. Otherwise, we define
$h_{j}=h_{j-1}+\tilde h_j$. Since $\tilde h_j \in C_{j}$, we know that $\tilde h_j(j)=1$. Therefore, we must have $h_{j}(j)=f(j)$. When $|i|<j$, we have
$\tilde h_j(i)=0$ because $h_j \in C_j$, therefore 
$$h_j(i)=h_{j-1}(i) \quad \text{when $|i|<j$}$$
by inductive hypothesis. This simultaneously ensures both equation (\ref{eq:prior})
holds and verifies equation (\ref{eq:ast}) for $i<j$ since by hypothesis
$h_{j-1}(i)=f(i)$ when $2 \leq i \leq j-1$. 
From the inductive hypothesis and definition of $\tilde h_j$, we have
$$\tau^m_\ast(h_{j-1}) \in C_{k+m} \and 
\tau^m_\ast(\tilde h_{j}) \in C_{j+m}$$
for each integer $m \geq 0$. Observe that when $j>k$, the sum of an element
in $C_{k+m}$ and $C_{j+m}$ lies in $C_{k+m}$. Therefore, we see by linearity of $\tau^m_\ast$ that
$$\tau^m_\ast(h_{j})=\tau^m_\ast(h_{j-1})+\tau^m_\ast(\tilde h_{j}) \in C_{k+m}$$
for every integer $m \geq 0$. Thus, $h_j \in A_k$ as desired. This completes the inductive step.
\end{proof}

\compat{Change (a) and (b) to (L2) and (L3)}

\begin{proof}[Proof of statement {\em (L2)} of Theorem \ref{thm:main ladder}]
We will consider the translation flow on the devious double cover $A(\tilde S_h, \tilde \alpha_h)$ of $A(S_L,\alpha_L)$ under  several assumptions.
We assume the trajectory $g^t A V'$ is non-divergent in $\SL_\pm(2,\R)/V'$. 
From the geodesic $g^t A \tilde V'$, we define the coding walk $\{n_k\}$.
We assume that the number of visits to any integer is finite, but that the 
growth exponent of the visit count satisfies $v>\varphi^2$.
As allowed by Remark \ref{rem:reflection simplification},
we assume that $\lim_{k \to \infty} n_k = -\infty$.  
By Lemma \ref{lem:deviousness}, we know that $\lim_{m \to +\infty} \tau^m_\ast(h)=\0.$
In particular, $\lim_{k \to \infty} \tau^{-n_k}_\ast(h)=\0.$

We will prove ergodicity holds for the translation flow on $A(\tilde S_h, \tilde \alpha_h)$
by appealing to Theorem \ref{thm:integrability}. 
(We upgrade to unique ergodicity at the end of the proof.)
Let $\eta>0$.
We will find subsurfaces of $X_t=g^t A(\tilde S_h, \tilde \alpha_h)$ and related geometric quantities so that the integral \eqref{eqn:integrability2} is infinite.

Now recall the definitions of the coding walk; see Proposition \ref{prop:walk}. 
There is a sequence of times $t_k$ so that 
$$g^t A \tilde V' \in F_{n_k} \quad \text{when $t_k<t<t_{k+1}$}.$$
Let $U$ and $C_d$ be subsets of $O(2) \bs \SL_\pm(2,\R) / V'$ as in Lemma \ref{lem:convergence}. Let
$$J_k=\{t \in (t_k,t_{k+1}):~O(2) g^t A V' \in C_d \smallsetminus U\}.$$
Since the geodesic $O(2) g^t A V'$ is non-divergent it is asymptotic to the convex core and so 
there is a ${\text \j}>0$ so that 
the constant
\begin{equation}
\label{eq:j}
{\text \j} \geq \textit{length}(J_k) \quad \text{for $k$ sufficiently large}.
\end{equation}
(In fact, $\textit{length}(J_k)$ has a uniform lower bound once the geodesic is within distance $d$
from the convex core.) By Lemma \ref{lem:convergence}, there is a compact set $K \subset \SL_\pm(2,\R)$ so that for any $t \in J_k$ there is a matrix $M_t \in K$ so that
\begin{equation}
\label{eq:X t}
X_t=M_t (\tilde S_{\tau^{-n_k}_\ast(h)}, \tilde \alpha_{\tau^{-n_k}_\ast(h)}\big) \quad \text{as elements of $\tilde \sO_{\Z_2}(S_L,\alpha_L)$}.
\end{equation}

We will now explain how to find subsurfaces. The surfaces will always be
obtained by lifting two copies of a disk in $(S_L,\alpha_L)$; so we will always
have $C_t=2$ in the language of the Theorem \ref{thm:integrability}.
We think of $(S_L,\alpha_L)$ as depicted by Figure \ref{fig:ladder_surface}: the surface is a topological disk in the plane with edge identifications. For small $\kappa>0$, consider the subset $U_0(\kappa)$ of this disk consisting of points whose distance from the boundary of the disk is greater than $\kappa$. Then let $U(\kappa) \subset U_0(\kappa)$
be the subsurface of the largest area. Since these regions exhaust the disk as $\kappa \to 0$, we can
choose a $\kappa$ so that $U=U(\kappa)$ contains more than a factor of $1-\eta$ of the disk's area.
See Figure \ref{fig:ladder_surface_subsurface} for an example of $U(\kappa)$.
We can think of $U$ as lying in the surface $(S_L,\alpha_L)$. 

\begin{figure}
\includegraphics[width=2in]{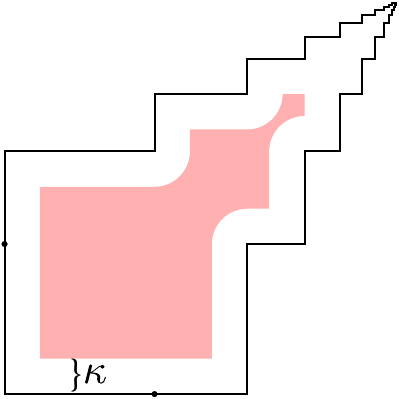}
\caption{An example $U(\kappa)$ in the disk making up $(S_L,\alpha_L)$.}
\label{fig:ladder_surface_subsurface}
\end{figure}

In order to define our surfaces $S_t \subset A(\tilde S_h, \tilde \alpha_t)$, 
observe that $A(\tilde S_h, \tilde \alpha_t)=g^{-t}(X_t)$ 
when $t \in \bigcup_k J_k$
and consider $X_t$ as in \eqref{eq:X t}. (We will not bother to estimate the value being integrated when $t \not \in \bigcup_k J_k$.)
Let $\tilde U_t^1$ and $\tilde U_t^2$ be the two lifts of $U$ to
$(\tilde S_{\tau^{-n_k}_\ast(h)}, \tilde \alpha_{\tau^{-n_k}_\ast(h)}\big)$. Then we define 
$S_t=g^{-t} M_t(\tilde U_t^1 \cup \tilde U_t^2).$ Evaluating geometric quantities of $S_t \subset A(\tilde S_h, \tilde \alpha_t)$ with $\dist_t$ is the same
as evaluating the same quantities for the subsurface $M_t(\tilde U_t^1 \cup \tilde U_t^2)$ of $X_t$. Observe that for $t \in \bigcup_k J_k$:
\begin{itemize}
\item $C_t=2$.
\item The distance from $M_t(\tilde U_t^1 \cup \tilde U_t^2)$ to the singular set has a uniform lower bound, i.e., $\epsilon(t)$ can be taken to be a positive constant, since $\epsilon(t)$ is bounded by $\kappa$ divided by the operator norm of $M_t^{-1}$ which has a uniform upper bound since $M_t \in K$ and $K$ is compact.
\item The diameters of the components have a uniform upper bound given by the diameter of $U$ times the maximal operator norm of $M \in K$. So ${\mathcal D}_t^i$ can be taken to be a constant independent of $t$ and $i$.
\end{itemize}

Finally, we need to consider the maximum over all curves joining
our two subsurfaces of the minimum $\dist_t$ distance from a point on the curve to a singularity. We will choose a canonical curve which depends mostly on the proximity of $\tau^{-n_k}_\ast(h)$ to zero, $P \circ \tau^{-n_k}_\ast(h)$, which was defined in equation \eqref{eq:proximity}. We will first define a curve joining
$\tilde U_t^1$ and $\tilde U_t^2$ in $(\tilde S_{\tau^{-n_k}_\ast(h)}, \tilde \alpha_{\tau^{-n_k}_\ast(h)}\big)$, then we will push this curve under $M_t$ into $X_t$.
(Again it suffices to measure things in $X_t$.)

Consider the cover $(\tilde S_{\tau^{-n_k}_\ast(h)}, \tilde \alpha_{\tau^{-n_k}_\ast(h)}\big)$ as a pair of infinite polygons with edges labeled as in Figure \ref{fig:ladder_surface} and identified in some way. The edge labels with the smallest absolute values which are glued so as to join up the two polygons
are of the form $p=\pm P \circ \tau^{-n_k}_\ast(h)$ in Figure \ref{fig:ladder_surface}.
We choose a curve to leave the first subsurface and move upward along the slope $1$ line of symmetry until it reaches the height of the edge $p$, then it changes trajectory by $\pm 45^\circ$ and travels through the midpoint of the edge connecting to the other disk. The curve continues until it hits the line of symmetry in the second disk, and it returns to the second subsurface along the line of symmetry of the disk. See Figure \ref{fig:ladder_surface_path}. It should be observed that there are positive constants 
$a$ and $b$ so that
the distance from this curve to metric completion are of the form
$$\min~\{a, b \varphi^{-|P \circ \tau^{-n_k}_\ast(h)|}\}.$$
(This is a consequence of the apparent self-similarity of the surface.)
Then using the fact that $M_t$ is taken from a compact set, we see we can take
\begin{itemize}
\item $\delta_t=c \min~\{a, b \varphi^{-|P \circ \tau^{-n_k}_\ast(h)|}\}$ for some $c>0$.
\end{itemize}

\begin{figure}
\includegraphics[width=5in]{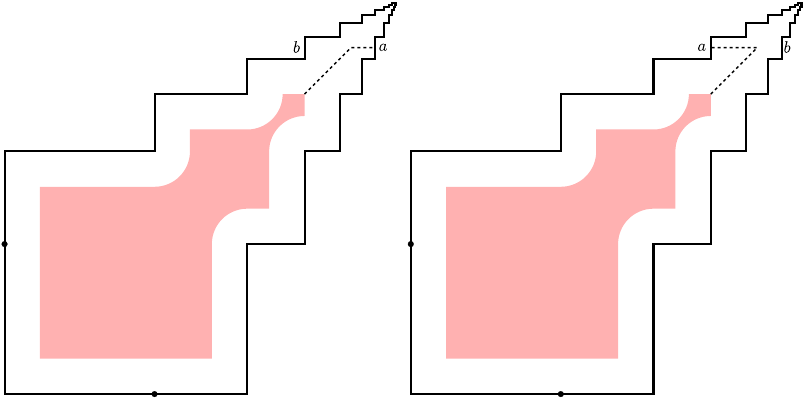}
\caption{The path joining the two subsurfaces when $P(h_\ast)=5$ and $h_\ast(5)=1$.
The letters $a$ and $b$ denote edges joining the two disks defining the cover in this case.}
\label{fig:ladder_surface_path}
\end{figure}

We now have control of all the geometric quantities and can bound the integral.
From the above definitions $d=\epsilon(t)^{-2} \sum_{i=1}^2 {\mathcal D}_t^i$ is a positive constant. For $t \in J_k$, the quantity being integrated is of the form 
$$\Big(d+\frac{1}{c \min~\{a, b \varphi^{-|P \circ \tau^{-n_k}_\ast(h)|}\}}\Big)^{-2}.$$

Now we invoke the hypothesis that $\lim_{N \to \infty} \tau^N_\ast(h)=\0$. By Lemma \ref{lem:limits to zero},
we know that there is an $M>0$ and a $L \in \Z$ so that
$\tau^N_\ast(h)$ lies in the cylinder set $C_{L+N}$ for all integers $N>M$. In particular,
$P\big(\tau^N_\ast(h)\big)=L+N$. Then for sufficiently large values of $N$, say $N \geq N_0$, we
can arrange that when $n_k=-N$,
$$\min~\{a, b \varphi^{-|P \circ \tau^{-n_k}_\ast(h)|}\}=b \varphi^{-N-L}.$$
Let $V_N=\#\{k:~-n_k=N\}$. Recall \eqref{eq:j}, the length of each $J_j$ is bounded from below by ${\text \j}>0$. This allows us to write a lower bounds for the integral as
$${\text \j} \sum_{N=N_0}^\infty V_N \big(d+\frac{1}{b c} \varphi^{N+L}\big)^{-2}.$$
An application of the root test tells us that this series diverges if 
$$v=\limsup_{N \to \infty} V_N> \varphi^2.$$
This was precisely our hypothesis, and Theorem \ref{thm:integrability} gives us ergodicity.

We will now show that ergodicity implies unique ergodicity in statement (L2).
By Corollary \ref{cor:lifting}, we just need to know that the translation flow on $A(S_L,\alpha_L)$ is uniquely ergodic. By Theorem \ref{thm:prior work}, we have unique ergodicity unless $O(2)g^t A V'$ is asymptotic to the convex core boundary. But then the lifted geodesic $O(2)g^t A \tilde V'$ on $O(2) \bs \SL_\pm(2,\R) / \tilde V'$ is asymptotic to the lifted convex core boundary, which is depicted as the boundary between the light and dark gray regions on the right side of Figure \ref{fig:periodic disk}. But then the coding walk $\langle n_k \rangle$ must grow asymptotically linearly, i.e., there is a $K$ so that for all $k>K$ we have $n_{k+1}=n_k+1$  or for all $k>K$ we have $n_{k+1}=n_k+1$. But then the growth exponent is $v=1$ which is not allowed in case (L2).
\end{proof}

Now we will consider how to obtain non-ergodic covers. We will make use of ideas
of Masur and Smillie which first appeared in \cite[Theorem 2.1]{MS91}. 
The criterion developed there for non-ergodicity carries over from the closed surface case to the infinite type case. 
We will state the (only slightly different) version from \cite[Theorem 3.3]{MT} in our setting. For the following theorem recall that the {\em vertical holonomy} of a curve $\gamma$ in a translation surface is the imaginary part of $\int_\gamma \alpha$. 

\begin{theorem}[Masur-Smillie \cite{MS91}]
\label{thm:Masur-Smillie}
Let $(S,\alpha)$ be a unit area translation surface of possibly infinite topological type,
and assume the translation flow is defined for all time almost everywhere.
Suppose there is a sequence of directions $\theta_n$ tending to the horizontal
and a sequence of partitions of the surface into two pieces, $S=A_n \sqcup B_n$,
so that the common boundary consists of a countable union
of line segments in direction $\theta_n$. 
Assume further that the absolute values of the vertical holonomies of the segments sum to $h_n<\infty$. Suppose also that:
\begin{enumerate}
\item[(i)] $\lim_{n \to \infty} h_n=0$.
\item[(ii)] There are constants $c$ and $c'$, so that $0<c<\mu(A_n)<c'<1$
for each $n$, where $\mu$ is Lebesgue measure on $(S_L,\alpha_L)$.
\item[(iii)] $\sum_{n=1}^\infty \mu(A_n \Delta A_{n+1})<\infty$, where $\Delta$ denotes symmetric difference.
\end{enumerate}
Then, the translation flow (horizontal straight-line flow) on $(S_L,\alpha_L)$ is not ergodic.
\end{theorem}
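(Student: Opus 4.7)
The plan is to extract a limiting measurable set $A$ from the sequence $\{A_n\}$ using condition (iii), verify it has intermediate Lebesgue measure using condition (ii), and show it is essentially flow-invariant using condition (i) together with the geometry of how the horizontal flow crosses the boundary of $A_n$. Since an essentially invariant set of intermediate measure contradicts ergodicity, this will finish the proof.

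For the limit set, observe that condition (iii) says precisely that $\sum_n \|\chi_{A_{n+1}} - \chi_{A_n}\|_{L^1(\mu)} < \infty$, so the sequence of indicator functions $\{\chi_{A_n}\}$ is Cauchy in $L^1(\mu)$. Let $f \in L^1(\mu)$ be its limit. After passing to a subsequence along which convergence is pointwise almost everywhere, we see that $f$ takes values in $\{0,1\}$ almost everywhere, so $f = \chi_A$ for some measurable set $A \subset S$. Condition (ii) gives $\mu(A) = \lim_n \mu(A_n) \in [c, c']$, so $0 < \mu(A) < 1$.

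The geometric heart of the argument is the estimate
$$\mu\big(A_n \,\Delta\, F^t(A_n)\big) \leq t \cdot h_n \qquad \text{for every } t > 0,$$
where $F^t$ denotes the horizontal translation flow. The symmetric difference consists precisely of points whose horizontal trajectory of length at most $t$ crosses the common boundary of $A_n$ and $B_n$. For each boundary segment $\sigma_i$ with vertical holonomy $v_i$, the set of points whose forward horizontal orbit crosses $\sigma_i$ within time $t$ lies in a parallelogram of horizontal width $t$ and vertical height $|v_i|$, of area $t|v_i|$. Summing over the countably many segments yields $t \sum_i |v_i| = t h_n$. Combining this with the triangle inequality and the measure-preservation of $F^t$,
$$\mu\big(A \,\Delta\, F^t(A)\big) \;\leq\; 2\mu(A \,\Delta\, A_n) + t h_n,$$
and letting $n \to \infty$, using $L^1$-convergence and condition (i), shows $F^t(A) = A$ modulo null sets for every $t$. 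Thus $A$ is an essentially invariant set of intermediate measure, and the horizontal flow is not ergodic.

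The main obstacle I anticipate is a careful justification of the parallelogram crossing estimate in the infinite-genus setting. The boundary is a countable union of segments that may abut the singular locus $\Sigma$ and may be oriented variously; one must verify that trajectories hitting $\Sigma$ before time $t$ form a null set that does not affect the bound, that over-counting multiple crossings only strengthens the inequality, and that the sum of parallelogram areas can be passed through the measure via monotone convergence. The finiteness $h_n < \infty$ from the hypothesis provides exactly the absolute summability needed to justify this last step and make the geometric bound rigorous in the non-compact setting.
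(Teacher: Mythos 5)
Your proof is correct and reproduces the standard Masur--Smillie argument: pass to an $L^1$ limit $\chi_A$ using the summability in (iii), show $\mu(A)$ is intermediate via (ii), bound $\mu(A_n \,\Delta\, F^t A_n)$ by $t\,h_n$ via the flux of the horizontal field across the boundary segments, and let $n\to\infty$ using (i). The paper itself omits the proof and simply cites Masur--Tabachnikov \cite[Theorem 3.3]{MT}, whose argument is essentially the one you give (including the observation that finite total holonomy $h_n<\infty$ is what lets the flux estimate survive the countable boundary in the infinite-type setting), so you have taken the same route.
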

The proof as provided in \cite[Theorem 3.3]{MT} works in our setting, so we only explain the main ideas of the proof.
Through a measure theoretic argument, one can see that the set $A_\infty=\liminf A_n$ must be almost flow invariant in the sense that for any $t$ the symmetric difference of $A_\infty$ with its image under translation flow for time $t$ has Lebesgue measure zero. 
The set $A_\infty$ has measure bounded away from zero and one. An argument involving Fubini's theorem then produces an invariant set which is the same up to sets of Lebesgue measure zero which proves non-ergodicity. The arguments make no use of compactness or any other properties which do not hold in our setting.

\begin{proof}[Proof of statement {\em (L3)} of Theorem \ref{thm:main ladder}]
Let $A \in \SL(2,\R)$ and $h \in \Hom(\Gamma,\Z_2)$ determine a devious cover $A(\tilde S_h,\tilde \alpha_h)$ of $A(S_L,\alpha_L)$. We also make several other hypotheses. We consider the coding walk of the geodesic $g^t A \tilde V'$ in $O(2) \bs \SL_\pm(2,\R)/ \tilde V'$,
and consider its coding walk $\{n_k\}$. Statement (L3) concerns the case when the coding walk diverges quickly
in the sense that the growth exponent $v$ is smaller than $\varphi^2$. In light of Remark \ref{rem:reflection simplification}, we can assume that $\lim_{k \to \infty} n_k=-\infty$. By Lemma \ref{lem:deviousness}, we know that $\lim_{m \to +\infty} \tau_\ast^{m}(h)=\0$. 

We will prove that the translation flow on $A(\tilde S_h, \tilde \alpha_h)$ is not ergodic. To do this, we will find a sequence of partitions satisfying the criteria set out by Masur and Smillie (Theorem \ref{thm:Masur-Smillie}). Our partitions will be obtained by pulling back partitions
of the deformed surface $g^t A(\tilde S_h, \tilde \alpha_h)$ along a sequence of times tending to $+\infty$. 

It will be important for us to label the lifts of the basepoint to our double covers. We label the lifts
by the elements of $\Z_2$. Then when we deform our surfaces or apply linear maps, we will respect the labels of the basepoint. (Note that all double covers are normal and so admit a translation automorphism swapping the lifts of the basepoint.)

We introduce the following setup. Consider the region $F_0$ in $O(2) \bs \SL_\pm(2,\R) / \tilde V'$. 
Since the geodesic $g_t A V'$ is asymptotic to the convex core of $O(2) \bs \SL_\pm(2,\R) / V'$, there is a $d>0$ so that
this geodesic is contained entirely in the $d$-neighborhood of this convex core. 
Let $F_0' \subset F_0$ be the closed $d$-neighborhood of the lift of the convex core.
Choose an $M_0 \in \SL_\pm(2,\R)$ so that $M_0 \tilde V'$ lies in $F_0'$. Now consider another point $B \tilde V'$ of $F_0'$. Select a path
$\gamma_B$ in $F_0'$ joining $M_0 \tilde V'$  to $B \tilde V'$. We can lift this path in $\SL_\pm(2,\R)/ \tilde V'$
to a path in $\SL(2,\R)$ beginning at $M_0$. We let $E(B) \in \SL(2,\R)$ denote the endpoint of this path. 
Observe that $E(B)$ lies in the coset $B \tilde V'$. The selection of $E(B)$ is not quite canonical; it 
does not depend on the choice of $B$ from the class $B \tilde V'$, but it depends on the homotopy class of the curve $\gamma_B$. Since $F_0'$ is topologically an annulus, a different choice of path might give us a different $E(B)$, and difference is explained by monodromy around the cusp. Let $\Psi=\langle D(\psi)^2 \rangle \subset \SL(2,\R)$ be the monodromy group of $F_0$. We see that while $E(B)$ is not canonical, the coset $E(B) \Psi$ is. We will make choices only depending on $E(B)\Psi$. 

Now consider an affine image of our cover, 
$g^t A(\tilde S_{h}, \tilde \alpha_{h})$, and suppose that $g^t A \tilde V'$ lies in the region $F_n$. Select
an element $R \in V'$ which projects to $\tau^{-n} \in V'/\tilde V'$
so that $g^t A \tilde V' R^{-1}=g^t A R^{-1} \tilde V'$ lies
in the region $F_0'$. The action of $R$ on $\Hom(\Gamma, \Z_2)$ is given by $\delta(R)=\tau^{-n}_\ast$. 
Therefore,
$$g^t A(\tilde S_{h}, \tilde \alpha_{h})=g^t A R^{-1} (\tilde S_{\tau^{-n}_\ast(h)}, \tilde \alpha_{\tau^{-n}_\ast(h)}),$$
where equality denotes translation equivalence respecting marked points.
Now consider $E(g^t A R^{-1})$ as in the prior paragraph. We have $E(g^t A R^{-1})=g^t A R^{-1} \tilde R^{-1}$
for some $\tilde R \in \tilde V'$. Since $\tilde V'$ acts trivially on $\Hom(\Gamma,\Z_2)$, we see that
\begin{equation}
\label{eq:identification}
g^t A(\tilde S_{h}, \tilde \alpha_{h})=E(g^t A R^{-1}) (\tilde S_{\tau^{-n}_\ast(h)}, \tilde \alpha_{\tau^{-n}_\ast(h)})
\quad \text{in $\tilde \sO_{\Z_2}(S_L,\alpha_L)$.} 
\end{equation}
A partition of the cover $(\tilde S_{\tau^{-n}_\ast(h)}, \tilde \alpha_{\tau^{-n}_\ast(h)})$
will then pull back under this translation isomorphism followed by $g^{-t}$ to a partition of our original surface $A(\tilde S_{h}, \tilde \alpha_{h})$.
The cover $(\tilde S_{\tau^{-n}_\ast(h)}, \tilde \alpha_{\tau^{-n}_\ast(h)})$ can be thought of two copies of the infinite polygon used to define $(S_L,\alpha_L)$ labeled $\sR_0$ and $\sR_1$ and glued together according to $\tau^{-n}_\ast(h) \in \Hom(\Gamma,\Z_2)$. 
The index of the regions is determined by the index of the lift of the basepoint the region contains.
We will partition the cover into two subsurfaces $\sA$ and $\sB$. From our point of view, there are three types of vertical cylinders
on the cover $(\tilde S_{\tau^{-n}_\ast(h)}, \tilde \alpha_{\tau^{-n}_\ast(h)})$:
\begin{enumerate}
\item vertical cylinders that stay entirely in region $\sR_0$.
\item vertical cylinders that stay entirely in region $\sR_1$.
\item vertical cylinders that pass through both $\sR_0$ and $\sR_1$.
\end{enumerate}
Figure \ref{fig:ladder_surface_cylinders} illustrates the two regions and the cylinder types.
For any integer $j<0$, we have either $\tau^{-n}_\ast(h)(j)=0$ or $\tau^{-n}_\ast(h)(j)=1$. In the first
case, we get vertical cylinders of types (1) and (2) passing through lifts of the edge labeled $j$
of the base surface (as depicted in Figure \ref{fig:ladder_surface}),
and in the second case, there is a single vertical cylinder of type (3) which passes through both the lifts of edges labeled $j$. We define the surface $\sA_n \subset (\tilde S_{\tau^{-n}_\ast(h)}, \tilde \alpha_{\tau^{-n}_\ast(h)})$ to 
be the union of all vertical cylinders of type (1), and we define $\sB_n$ to be the union of cylinders of the remaining
two types. We can push this partition onto the surface $g^t A (\tilde S_{h}, \tilde \alpha_{h})$
by applying the affine map $E(g^t A R^{-1})$ and using the identification given in equation \ref{eq:identification}.
This gives us a partition $(\sA^t, \sB^t)$ of $g^t A (\tilde S_{h}, \tilde \alpha_{h})$.
Note that while $E(g^t A R^{-1})$ is not quite canonical as noted in the prior paragraph, it is well defined
up to a power of $D(\psi)^2$. Note that the partition of $g^t A (\tilde S_{h}, \tilde \alpha_{h})$
obtained is the same no mater which element of $E(g^t A R^{-1})\Psi$ we choose, because the action of $D(\psi^2) \in \tilde V'$ preserves each vertical cylinder.

\begin{figure}
\includegraphics[width=5in]{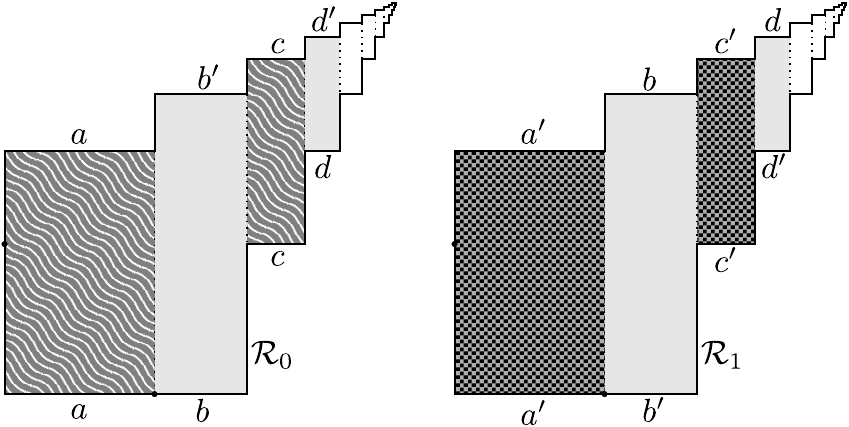}
\caption{A double cover of $(S_L,\alpha_L)$ with some edge gluings of horizontal edges labeled. There are two vertical cylinders of type (1) shown in gray with white waves, two of type (2) with a checkerboard pattern, and two of type (3) shown in light gray.}
\label{fig:ladder_surface_cylinders}
\end{figure}

For each $t$, we can pullback the partition $(\sA^t, \sB^t)$ of $g^t A (\tilde S_{h}, \tilde \alpha_{h})$
to a partition $g^{-t}(\sA^t, \sB^t)$ of $A (\tilde S_{h}, \tilde \alpha_{h})$.
This is actually a sequence of partitions, as we now explain.
When $g^t A \tilde V'$ lies in the region $F_n$, we have that $t$ lies in some interval
$(t_k, t_{k+1})$ where $n_k=n$. We claim the partition $g^{-t}(\sA^t, \sB^t)$ is independent of the choice
of $t$ from this interval. This is because the coset $g^{-t} E(g^t A R^{-1})\Psi$
is constant on the interval, since given a path to $g^t A R^{-1}$ we can get a path to 
$g^{t_\ast} A R^{-1}$ by traveling along the geodesic, and this change is canceled by the application of $g^{-t}$. 
Thus the partition only depends on $k$, and we define 
$\sA'_k=g^{-t}(\sA^t)$ and $\sB'_k=g^{-t}(\sA^t)$ for any $t$ satisfying $t_k<t<t_{k+1}$.
These subsurfaces partition $A(\tilde S_{h}, \tilde \alpha_{h})$.

It remains to check that our sequence of partitions $(\sA'_k, \sB'_k)$ satisfy the criterion of Masur and Smillie.

Consider statement (i), we need to show the total vertical holonomy of $\partial \sA'_k$ tends to zero as $k \to \infty$. This can be based on the observation that the total length of the boundary of the partitions $(\sA_n, \sB_n)$ of
the surface $(\tilde S_{\tau^{-n}_\ast(h)}, \tilde \alpha_{\tau^{-n}_\ast(h)})$ can be bounded from above independent of
$n$ and $\tau^{-n}(h)$. Indeed, for any double cover, the total length of all boundaries of all vertical cylinders is a finite constant independent of the cover. It is twice the corresponding constant for the base surface $(S_L,\alpha_L)$,
and the constant there is finite because the cylinders decay in size exponentially. We can choose $R \in V'$
as above so that $g^t A R^{-1} \tilde V'$ lies in the region $F_0'$ when $t_k<t<t_{k+1}$.
Select $t$ so that $g^t A R^{-1} \tilde V'$ lies in a fixed compact set of $F_0'$, which can be taken to be a neighborhood
of the convex core with a cusp neighborhood removed. Note then that by definition of $E$, the quantity
$E(g^t A R^{-1})\Psi$ is taken from a compact subset of $\SL(2,\R)/ \Psi$. So, in particular we get a uniform upper bound $L<\infty$ on the length of the boundary of the partition $(\sA^t, \sB^t)$ of the surface $g^t A(\tilde S_{h}, \tilde \alpha_{h})$. (This uses the identification of equation \ref{eq:identification}.) In particular,
the vertical component of this length is bounded from above by $L$. Pulling back via $g^{-t}$, we see 
that the vertical component of the length of the common boundary of $\sA'_k$ and $\sB'_k$ is bounded from above by $e^{-t} L$, which tends to zero as $k \to \infty$ because there is a uniform lower bound on $t_{k+1}-t_k$. This verifies statement (i).

To prove statement (ii), we need to make use of the assumptions that $\lim_{k \to \infty} n_k=+\infty$
and $\lim_{m \to \infty} \tau^m_\ast(h)=\0$. By Lemma \ref{lem:limits to zero},
there is an $M \geq 0$ and an integer $j$ so that $m>M$ implies that $P\big(\tau^m_\ast(h)\big)=j+m,$
where $P$ denotes proximity; see equation \ref{eq:proximity}.
Since $\{n_k\}$ tends to $+\infty$, there is a $K$ so that $k>K$ implies $P\big(\tau^{-n_k}_\ast(h)\big)>1$.
Since the proximity is larger than one, $\tau^{-n_k}_\ast(h)(-1)=0$. This means that there are two vertical cylinders in the surface $(\tilde S_{\tau^{-n_k}_\ast(h)}, \tilde \alpha_{\tau^{-n_k}_\ast(h)})$ passing through lifts of the edge labeled $-1$. One of these cylinders lies in $\sA_{n_k}$ and the other lies in $\sB_{n_k}$. Thus when $k>K$,
we obtain upper and lower bounds on the area of $\sA_{n_k}$. Since the partition $(\sA'_k, \sB'_k)$
was obtained by pulling back along an area preserving map, the same bounds hold here. (We only need to verify statement (ii) holds only for all but finitely many $k$.)

In order to understand (iii), we need to investigate how our partition changes
when the geodesic $g^t A \tilde V'$ passes from a region $F_n$ to $F_{n+1}$. Consider a time
$t$ so that $g^t A \tilde V'$ lies in the common boundary between $F_n$ and $F_{n+1}$. This is the point at which the partition changes. First consider this point $g^t A \tilde V'$ as part of $F_n$. Then,
we build a partition $(\sA_{n}, \sB_{n})$ of the cover $(\tilde S_{\tau^{-n}_\ast(h)}, \tilde \alpha_{\tau^{-n}_\ast(h)})$ as described above,
and apply $g^{-t} E(g^t A R_n^{-1})$, where $R_n \in V'$ is any Veech group element carrying $F_0$ to $F_n$,
to partition our original surface.
Here $E(g^t AR_n^{-1}) \in \SL(2,\R)$ is determined by lifting a path joining the $M_0 \tilde V'$ to $g^t AR_n^{-1} \tilde V'$ within $F_0'$. Now we consider what happens when we consider  $g^t A \tilde V'$ as part of $F_{n+1}$.
We construct a partition $(\sA_{n+1}, \sB_{n+1})$ of the cover $(\tilde S_{\tau^{-n-1}_\ast(h)}, \tilde \alpha_{\tau^{-n-1}_\ast(h)})$. We apply $g^{-t} E(g^t A R_{n+1}^{-1})$, where $R_{n+1}$ is some Veech group element carrying $F_0$ to $F_{n+1}$, to partition our original surface.
We need to understand the area of the symmetric difference of the pulled back partitions. It is equivalent to find the area of the symmetric difference between the partition $(\sA_{n}, \sB_{n})$ and the other partition of $(\tilde S_{\tau^{-n}_\ast(h)}, \tilde \alpha_{\tau^{-n}_\ast(h)})$ obtained as the image of $(\sA_{n+1}, \sB_{n+1})$ under
$$R_\ast=\big(g^{-t} E(g^t A R_n^{-1})\big)^{-1}g^{-t} E(g^t A R_{n+1}^{-1})=E(g^t A R_n^{-1})^{-1} E(g^t A R_{n+1}^{-1}).$$
The elements $E(g^t A R_n^{-1})$ and $E(g^t A R_{n+1}^{-1})$ are determined based on lifting paths $\gamma_{n}$ and $\gamma_{n+1}$
in $\SL_\pm(2,\R) /\tilde V'$ to paths $\tilde \gamma_n$ and $\tilde \gamma_{n+1}$ respectively. 
The value of $R_\ast \in \SL(2,\R)$ can then be determined by lifting the path obtained by first following $\gamma_1$
and then following the translated path $E(g^t A R_n^{-1})(\gamma_2)$ backward. The result is a path which passes once through the common boundary between $F_0$ and $F_1$, and joins the equivalence class of the $M_0$ to $D(\rho \circ \psi) \tilde V'$. The value of $R_\ast$ is the endpoint of this path lifted to $\SL(2,\R)$, which we see lies in the double coset 
$$\Psi D(\rho \circ \psi)\Psi \in \Psi \bs \SL(2,\R) / \Psi.$$
As the action of an element $\Psi$ does not change our partitions, we can choose to work with the simplest element
from our point of view, $R_\ast=D(\rho \circ \psi)$. 

We need to estimate the area of the symmetric difference
$\sA_n \Delta R_\ast(\sA_{n+1})$. 
Here, we interpret $R_\ast$ as an affine homeomorphism
$$R_\ast:(\tilde S_{\tau^{-n-1}_\ast(h)}, \tilde \alpha_{\tau^{-n-1}_\ast(h)}) \to (\tilde S_{\tau^{-n}_\ast(h)}, \tilde \alpha_{\tau^{-n}_\ast(h)})$$
which is characterized by its derivative and respects the labels of lifts of the basepoint. Observe that
the action of the matrix $R_\ast$ carries vertical cylinders to horizontal cylinders and preserves their widths. 
It also respects the labeling of the lifts of the basepoints.
Suppose that the proximity $P\big(\tau^{-n-1}_\ast(h)\big)=p$. For any integer $e<0$ with $-e<p$, 
we have $\tau^{-n-1}_\ast(h)(e)=0$, and thus the
vertical cylinder in $(\tilde S_{\tau^{-n-1}_\ast(h)}, \tilde \alpha_{\tau^{-n-1}_\ast(h)})$
starting in regions $\sR_0$ and passing through a lift of the edge labeled $e$ in Figure \ref{fig:ladder_surface}
lies in the subsurface $\sA_{n+1}$. Consider the union $U \subset \sA_{n+1}$ of all such vertical cylinders in $\sA_{n+1}$
with $-p<e<0$. We observe that because basepoint labels are preserved, $R_\ast(U) \subset \sA_n$, with each
vertical cylinder through $e$ being sent to a horizontal cylinder in $\sA_n$ passing through a vertical edge labeled $-e$. Let $\nu$ be the Lebesgue probability measure on $(\tilde S_{\tau^{-n}_\ast(h)}, \tilde \alpha_{\tau^{-n}_\ast(h)})$. We get the upper bound on the area of the symmetric difference
$$\nu \big(\sA_n \Delta R_\ast(\sA_{n+1})\big) \leq \nu(\sA_n)+\nu\big(R_\ast(\sA_{n+1})\big)-2 \nu(U) \leq 1 -2 \nu(U).$$
Observe that successively smaller cylinders in $(S_L,\alpha_L)$ decrease in area by a factor of $\varphi^2$.
It follows that there is a constant $\alpha>0$ so that $1-2 \nu(U)< \alpha \varphi^{-2 p}$. So, we see 
$$\nu \big(\sA_n \Delta R_\ast(\sA_{n+1})\big)< \alpha \varphi^{-2 p},$$
where $p=P\big(\tau^{-n-1}_\ast(h)\big)$ as above. 

Let $\mu$ be the normalized Lebesgue measure on the surface $A (\tilde S_h, \tilde \alpha_h)$. 
The prior paragraph gives an upper bound on $\mu(\sA'_k \Delta \sA'_{k+1})$ when $n_{k+1}=n_k+1$
in terms of the proximity $p(n_{k+1})=P\big(\tau^{-n_k-1}_\ast(h)\big)$.
A similar bound holds for the case when $n_{k+1}=n_k-1$; there is a constant $\alpha'>\alpha$ so that
$$\mu(\sA'_{k} \Delta \sA'_{k+1})<\alpha' \varphi^{-2 p(n_{k+1})}$$
regardless if $n_{k+1}$ equals $n_k+1$ or $n_k-1$. 
 
Now we will consider the total sum of the symmetric differences to verify statement (iii). Let
$V_N=\# \{k:~n_k=N\}$ for integers $N$. By assumption, each $V_N$ is finite, and $V_N=0$ for $N<N_0$
for some $N_0 \in \Z$. Then, 
$$\sum_k \mu(\sA'_{k} \Delta \sA'_{k+1}) \leq \sum_{N=N_0}^\infty V_N \varphi^{-2 p(N)}.$$
Now we incorporate the assumption that $\lim_{N \to +\infty} \tau^N_\ast(h)=\0$. 
From Lemma \ref{lem:limits to zero},
we obtain an $M \geq 0$ and an integer $j$ so that $N>M$ implies that $P\big(\tau^N_\ast(h)\big)=j+N.$
Let $X < \infty$ be the sum over the terms with $N \leq M$, then we see that
$$\sum_k \mu(\sA'_{k} \Delta \sA'_{k+1}) \leq X + \sum_{N=M+1}^\infty V_N \varphi^{-2 (j+N)}.$$
An application of the root test tells us this sum converges if $\limsup_{N \to \infty} V_N^{1/N}<\varphi^2$. 
Since this was a hypothesis, we have verified statement (iii). As an application of Theorem \ref{thm:Masur-Smillie},
we see that the translation flow on $A (\tilde S_h, \tilde \alpha_h)$ is not ergodic.
\end{proof}

\bibliographystyle{amsalpha}
\bibliography{bibliography}
\end{document}